\newcounter{point}
\renewcommand{\leq}{\leqslant}
\renewcommand{\geq}{\geqslant}
\numberwithin{equation}{section}
\def\stacksum#1#2{{\stackrel{{\scriptstyle #1}}
{{\scriptstyle #2}}}}
\newcommand{\Cc}{\mathbf{C}}
\newcommand{\Aa}{\mathbf{A}}
\newcommand{\Zz}{\mathbf{Z}}
\newcommand{\Pp}{\mathbf{P}}
\newcommand{\Gg}{\mathbf{G}}
\newcommand{\Qq}{\mathbf{Q}}
\newcommand{\Fp}{{\mathbf{F}_p}}
\newcommand{\Ff}{\mathbf{F}}
\newcommand{\Fq}{\mathbf{F}_q}
\newcommand{\Fqt}{\mathbf{F}^\times_q}
\newcommand{\Fqn}{\Ff_{q^{\nu}}}
\newcommand{\bFq}{\bar{\Ff}_q}
\newcommand{\bQl}{\bar{\Qq}_{\ell}}
\newcommand{\bx}{\bar{x}}
\newcommand{\mods}[1]{\,(\mathrm{mod}\,{#1})}
\newcommand{\frtr}[3]{(\Tr{{#1}})({#2},{#3})}
\newcommand{\frfn}[1]{t_{{#1}}}
\newcommand{\ra}{\rightarrow}
\newcommand{\lra}{\longrightarrow}
\newcommand{\injecte}{\hookrightarrow}
\newcommand{\fleche}[1]{\stackrel{#1}{\lra}}
\DeclareMathOperator{\spec}{Spec}
\DeclareMathOperator{\rank}{rank}
\DeclareMathOperator{\pct}{pct}
\DeclareMathOperator{\kop}{T}
\DeclareMathOperator{\ord}{ord}
\DeclareMathOperator{\Imag}{im}
\DeclareMathOperator{\frob}{\mathrm{Fr}}
\DeclareMathOperator{\Tr}{tr}
\DeclareMathOperator{\swan}{Swan}
\DeclareMathOperator{\drop}{drop}
\DeclareMathOperator{\ft}{FT}
\DeclareMathOperator{\cond}{\mathbf{c}}
\newcommand{\eps}{\varepsilon}
\renewcommand{\rho}{\varrho}
\newcommand{\demi}{{\textstyle{\frac{1}{2}}}}
\newcommand{\sheaf}[1]{\mathcal{{#1}}}
\DeclareMathSymbol{\gena}{\mathord}{letters}{"3C}
\DeclareMathSymbol{\genb}{\mathord}{letters}{"3E}
\theoremstyle{plain}
\newtheorem{theorem}{Theorem}[section]
\newtheorem{lemma}[theorem]{Lemma}
\newtheorem{corollary}[theorem]{Corollary}
\newtheorem{proposition}[theorem]{Proposition}
\newtheorem*{proposition*}{Proposition}
\theoremstyle{remark}
\theoremstyle{definition}
\newtheorem{definition}[theorem]{Definition}
\newtheorem{example}[theorem]{Example}
\newtheorem{remark}[theorem]{Remark}
\newcommand{\mcL}{\mathcal{L}}
\newcommand{\mcF}{\mathcal{F}}
\newcommand{\mcG}{\mathcal{G}}
\newcommand{\mcH}{\mathcal{H}}
\newcommand{\mcK}{\mathcal{K}}
\renewcommand{\geq}{\geqslant}
\renewcommand{\leq}{\leqslant}
\begin{document}

\title{On the conductor of cohomological transforms}
 
\author{\'Etienne Fouvry}
\address{  Université Paris--Saclay,   CNRS \\
Laboratoire de Math\'ematiques d'Orsay\\
  91405 Orsay  \\France}
\email{etienne.fouvry@universite-paris-saclay.fr}
 \author{Emmanuel Kowalski}
\address{ETH Z\"urich -- D-MATH\\
  R\"amistrasse 101\\
  CH-8092 Z\"urich\\
  Switzerland} \email{kowalski@math.ethz.ch} \author{Philippe Michel}
\address{EPFL/SB/IMB/TAN, Station 8, CH-1015 Lausanne, Switzerland }
\email{philippe.michel@epfl.ch}

\date{\today,\ \thistime} 

\thanks{Ph. M. was partially supported by the SNF (grant
  200021-137488) and the ERC (Advanced Research Grant
  228304). \'E.F. thanks ETH Z\"urich, EPF Lausanne and the Institut
  Universitaire de France for financial support.  Ph.M. and E.K. were
  supported by the SNF-DFG Grant 200020L\_175755.}

\subjclass[2010]{11T23,14F20,11G20} 

\keywords{\'Etale cohomology, conductor, $\ell$-adic sheaves, Riemann
  Hypothesis over finite fields, exponential sums}

\begin{abstract}
  In the analytic study of trace functions of $\ell$-adic sheaves over
  finite fields, a crucial issue is to control the conductor of
  sheaves constructed in various ways.  We consider cohomological
  transforms on the affine line over a finite field which have trace
  functions given by linear operators with an additive character of a
  rational function in two variables as a kernel. We prove that the
  conductor of such transforms is bounded in terms of the complexity
  of the input sheaf and of the rational function defining the kernel,
  and discuss applications of this result, including motivating
  examples arising from the \textsc{Polymath8} project.
  
  \vspace{0.2cm}
\medskip 

{\noindent \textsc{Résumé.} } Dans l'étude analytique des fonctions
traces de faisceaux $\ell$--adiques sur les corps finis, un problème
crucial est de contrôler la taille du conducteur de faisceaux
construits de façons variées.  Nous considérons les transformées
cohomologiques sur la droite affine sur un corps fini dont les
fonctions traces sont données par des opérateurs linéaires dont la
matrice est un caractère additif évalué sur une fonction rationnelle
en deux variables. Nous prouvons que le conducteur de telles
transformées est borné en fonction du conducteur du faisceau de départ
et de la fraction rationnelle définissant le noyau. Enfin nous
présentons des applications et des exemples, en particulier des
exemples provenant du projet \textsc{Polymath}.

\medskip
\end{abstract}

\maketitle

{\small{
\setcounter{tocdepth}{1}
\tableofcontents}
}

\section{Introduction}

This paper considers a problem which appeared in special cases
in~\cite{FKM1,FKM2, FKM3} in our study of analytic applications of trace
functions over finite fields. We are given a constructible $\ell$-adic
sheaf $\sheaf{K}$ on $\Aa^1\times \Aa^1$ (or, potentially, on another
algebraic surface) over a finite field $\Fq$, and we use it to define
a ``cohomological transform'' with ``kernel'' $\sheaf{K}$, that maps a
constructible $\ell$-adic sheaf $\sheaf{F}$ on $\Aa^1_{\Fq}$ to
$$
T_{\sheaf{K}}(\sheaf{F})=R^1p_{1,!}(p_2^*\sheaf{F}\otimes\sheaf{K})(1/2),
$$
where $p_1$, $p_2$ are the two projections $p_i\,:\,
\Aa^1\times\Aa^1\ra \Aa^1$. The problem is then to \emph{estimate the
  conductor of $T_{\sheaf{K}}(\sheaf{F})$}, as defined in~\cite{FKM1},
in terms of that of $\sheaf{F}$.
\par
The arithmetic interpretation of this problem, and our motivation, is
that for suitable input sheaves $\sheaf{F}$ (as described later in
more detail), the trace function $\frfn{T_{\sheaf{K}}(\sheaf{F})}$ of
 is related to the trace functions
$\frfn{\sheaf{K}}$ and $\frfn{\sheaf{F}}$ by
$$
\frfn{T_{\sheaf{K}}(\sheaf{F})}(x)=-\frac{1}{\sqrt{q}}\sum_{y\in
  \Fq}\frfn{\sheaf{F}}(y)\frfn{\sheaf{K}}(x,y), 
$$
for all $x\in \Fq$. In other words, for all $\sheaf{F}$, we have
$\frfn{T_{\sheaf{K}}(\sheaf{F})}=\kop_K(\frfn{\sheaf{F}})$, where
$$
K(x,y)=\frfn{\sheaf{K}}(x,y)
$$
and $\kop_K$ is the (normalized) linear map defined on the space
$C(\Fq)$ of complex-valued functions on $\Fq$ by the kernel $K$,
i.e.,
\begin{equation}\label{TKdef}
\kop_K(\varphi)(x)=
-\frac{1}{\sqrt{q}}\sum_{y\in\Fq}K(x,y)\varphi(y). 	
\end{equation}
\par
The most important example of such transforms arises for
$K(x,y)=\psi(xy)$, where $\psi$ is a non-trivial additive character,
which corresponds to $\sheaf{K}=\sheaf{L}_{\psi(XY)}$ (where $X$, $Y$
are the coordinates on $\Aa^1\times\Aa^1$): the corresponding linear
operator $\kop_K$ on trace functions is (minus the) normalized Fourier
transform (which we denote also $\ft_{\psi}$) on $C(\Fq)$, namely
\begin{equation}\label{FTdef}
\ft_{\psi}(\varphi)(x)=-\frac{1}{\sqrt{p}}\sum_{y\in
  \Fp}\varphi(y)\psi(xy). 
\end{equation}
\par
The sheaf-theoretic construction, in that case, is due to Deligne, and
it was extensively studied by Laumon~\cite{laumon}.
\par
This special case is crucial in~\cite{FKM1} (and the following
papers). In particular, it is essential for our applications that we
have an estimate for the conductor of the Fourier transform
$\sheaf{G}$ in terms only of the conductor of $\sheaf{F}$, which
follows from the estimate
\begin{equation}\label{eq-ft}
\cond(\sheaf{G})\leq 10\cond(\sheaf{F})^2,
\end{equation}
proved in~\cite[Prop. 8.2]{FKM1}. In order to establish this result,
which we view as a form of ``continuity'' of the sheaf-theoretic
Fourier transform, we used the deep theory of the local Fourier
transform of Laumon~\cite{laumon,katz-esde}.
\par
The general case of these transforms is a natural approach to
estimates for two-variable character sums (and more complicated
algebraic sums) based on Deligne's work, and an estimate for the
conductor leads for instance easily to strong statements of ``control
of the diagonal'' (see Proposition~\ref{pr-principle} for a precise
statement.)
\par
It is not at all clear if a local theory like Laumon's applies to the
general transforms we consider (the same applies to the theory of
``affine cohomological transforms'' of Katz~\cite{katz-act}, which has
diophantine applications to stratification results for sums of trace
functions.)  Thus, our present goal is to prove estimates for more
general cohomological transforms. These will be weaker
than~(\ref{eq-ft}), but more accessible. We will be able to do so when
the kernel $\sheaf{K}$ is a rather general Artin-Schreier sheaf, or in
other words (in the case when $q=p$ is prime) when
$$
K(x,y)=e\Bigl(\frac{f(x,y)}{p}\Bigr)
$$
for some rational function $f\in\Fp(X,Y)$.
\par
The precise statement is given in Theorem~\ref{th-conductor} in the
next section.  In the case of the Fourier transform, this gives a form
of the important property~(\ref{eq-ft}) which is more accessible than
Laumon's theory. Section~\ref{sec-polymath} treats this case fully, in
order to motivate and clarify the algebraic tools used in the general
case. In Section~\ref{sec-examples}, we discuss some first
applications of these bounds; for instance, we show how the ideas lead
to an account of the character sums considered by Conrey and Iwaniec
in~\cite{CI}. This section can, to a large extent, be read
independently of the part of the paper where the main results are
proved.

\begin{remark}
  (1) In work in progress, W. Sawin has developed a much more general
  and powerful theory of complexity measures of $\ell$-adic sheaves on
  schemes, including all so-called $6$ operations and derived category
  objects. His work will subsume ours entirely, but also it will
  involve much more difficult algebraic geometry. We (finally) submit
  the present paper for publication as an illustration of fairly
  simple manipulations of the formalism of étale cohomology, in the
  hope that it will be helpful to readers with a more analytic
  background.
  \par
  (2) In recent work, I. Petrow and M. Young~\cite{PY} generalized the
  estimate of Conrey and Iwaniec to more general characters. They need
  to estimate slightly different sums than those in~\cite{CI}, and the
  first draft of their preprint refered to this paper for this
  purpose. W. Sawin has also observed that their sums (as those of
  Conrey and Iwaniec) and special cases of hypergeometric sums, and
  can be directly estimated by an simple appeal to Katz's
  book~\cite{katz-esde}.
  \par
  (3) A slightly different definition of the conductor suggested by W.
  Sawin leads to better estimates (e.g., a linear bound for the
  conductor of the Fourier transform instead of~(\ref{eq-ft})). Since
  our work is in any case very restricted (see Remark (1)), and the
  most important qualitative feature is not affected for applications,
  we have not incorporated all the changes required by this
  adjustment.
\end{remark}

\subsection*{Acknowledgments.} Part of the original motivation for
this paper in 2013/2104 arose in online discussions related to the
\textsc{Polymath8} project. 
\par
We thank the referee for a detailed reading of the paper which led to significant simplications in several parts of the argument, and pointed out a
mistake in one of our applications.

\subsection*{Notation.}

By ``sheaf'', or ``$\ell$-adic sheaf'', we will always mean
``constructible $\bQl$-sheaf'', where $\ell$ will be a prime number
different from the characteristic of the base field.
\par
For a power $q\not=1$ of a prime $p$ and any integer $w\in\Zz$, a
\emph{$q$-Weil number of weight $w$} is an algebraic number
$\alpha\in\Cc$ such that all Galois-conjugates $\beta$ of $\alpha$
satisfy $|\beta|=q^{w/2}$.
\par
An algebraic variety over a field $k$ is a finite type, separated,
reduced scheme over $k$.  If $X/k$ is an algebraic variety over a
field $k$, and $\bar{k}$ is an algebraic closure of $k$, we denote by
$\bar{X}$ or $X_{\bar{k}}$ the base change $X\times \bar{k}$.
\par
For an algebraic variety $X$ over~$\Fq$ and an $\ell$-adic
sheaf~$\mcF$ on~$X$, we denote
$$
\chi_c(U,\mcF)=\sum_{i=0}^{2\dim(X)} (-1)^i\dim H^i_c(U\times
\bFq,\mcF).
$$
\par
If~$X$ is the affine line, we will abbreviate
$\chi(\sheaf{F})=\chi_c(\Aa^1,\sheaf{F})$, and we write
\begin{equation}\label{h1def}
h^1(\sheaf{F})=\dim H^1_c(\Aa^1_{\bar{\Fq}},\sheaf{F}).	
\end{equation}
\par
If $X_k$ is an algebraic variety over a field $k$ and $x\in X(k)$, we
denote by $\bx$ a geometric point above $x$. If $k$ is algebraically
closed, we take $\bx=x$. If $\sheaf{F}$ is an \'etale sheaf on $X$,
then $\sheaf{F}_{\bx}$ denotes the stalk of $\sheaf{F}$ at $\bx$.
\par
Whenever a prime $\ell$ is given, we assume \emph{fixed} an
isomorphism $\iota\,:\, \bQl\ra \Cc$, and we use it as an
implicit identification.
\par
For any $\ell$-adic sheaf $\sheaf{F}$ on an algebraic variety
$X_{\Fq}$, we write
$\frfn{\sheaf{F}}(x)$ for the value at $x$ of the trace function of
$\sheaf{F}$, i.e., we have
\begin{equation}\label{tracefctdef}
	\frfn{\sheaf{F}}(x)=\iota(\frtr{\sheaf{F}}{\Fq}{x}),
\end{equation}
the trace of the Frobenius of $\Fq$ acting on the stalk of
$\sheaf{F}$ at $x$.
\par
If $k/\Fq$ is a finite extension, we write
$$
\frfn{\sheaf{F}}(x,|k|)=\frfn{\sheaf{F}}(x,k)=\iota(\frtr{\sheaf{F}}{k}{x}).
$$

\section{Statement of the main result}

We first recall the definition of the conductor of a constructible
$\ell$-adic sheaf $\sheaf{F}$ on the affine line over a finite field
$\Fq$.  Indeed, since in this work it will be important to work with
general constructible sheaves, and not only the middle-extension
sheaves considered in our previous works, we need to adapt the
definition slightly.
\par
Let $\sheaf{F}$ be a constructible $\ell$-adic sheaf over
$\Aa^1_{\Fq}$. Let $U\subset \Pp^1$ be the maximal dense open subset
where $\sheaf{F}$ is lisse. Let $j\,:\, U\injecte \Pp^1$ be the
corresponding open immersion. Recall that there is a canonical
(adjunction) map
$$
\sheaf{F}\lra j_*j^*\sheaf{F},
$$
and that $\sheaf{F}$ is said to be a \emph{middle-extension sheaf} if
this is an isomorphism. In general, if we let
$$
\sheaf{F}_0=j_*j^*\sheaf{F},
$$
then one shows that $\sheaf{F}_0$ is a middle-extension sheaf on
$\Pp^1_{\Fq}$, which is isomorphic to $\sheaf{F}$ when restricted to
$U$. We define
$$
\cond(\sheaf{F})=\rank(\sheaf{F}_0)+\sum_x
\swan_x(\sheaf{F}_0)+n(\sheaf{F})+\pct(\sheaf{F}),
$$
where:
\par
-- $n(\sheaf{F})=|(\Pp^1-U)(\bFq)|$ is the number of singularities of
$\sheaf{F}$ in $\Pp^1(\bFq)$;
\par
-- the sum is over $\Pp^1(\bFq)$, with all but finitely many terms
vanishing;
\par
-- we define
$$
\pct(\sheaf{F})=\dim H^0_c(\Aa^1\times\bFq,\sheaf{F}).
$$

\begin{remark}
  (1) If $\sheaf{F}$ is a middle-extension sheaf on $\Aa^1_{\Fq}$, we
  have $\sheaf{F}=\sheaf{F}_0$ (on $\Aa^1$) and
$$
\cond(\sheaf{F})=\rank(\sheaf{F})+\sum_x
\swan_x(\sheaf{F})+n(\sheaf{F}),
$$
as in our previous works. 
\par
(2) Let $\sheaf{P}$ be the kernel of the map
$$
\sheaf{F}\lra j_*j^*\sheaf{F}.
$$
\par
Then $\sheaf{P}$ has finite support; if this support is
$S\subset \Aa^1(\bFq)$, then
$$
|S|\leq \pct(\sheaf{F})\leq \sum_{s\in S} \dim \sheaf{F}_{\bar{s}}
$$ 
(see~\cite[\S 4.4, 4.5]{katz-sommes} for a discussion). 
\par
(3) Note that $n(\sheaf{F})$ takes into account the fact that a
general constructible sheaf might have ``artificial'' singularities,
which are not singularities of the associated middle-extension
sheaf. These may also be seen as the contribution to the conductor of
the cokernel $\sheaf{F}$ of the map
$$
\sheaf{F}\lra j_*j^*\sheaf{F},
$$
which is also a sheaf with finite support.
\par
For instance, let $U=\Pp^1-\Aa^1(\Fp)$ over $\Fp$, and let
$$
j\,:\, U\lra \Pp^1
$$ 
be the open immersion. Consider 
$$
\sheaf{F}=j_!\bQl,
$$
the extension by zero to $\Pp^1$ of the trivial sheaf on $U$. Then
$\sheaf{F}_0$ is the trivial sheaf on $\Pp^1$, with
$n(\sheaf{F}_0)=0$, and $\cond(\sheaf{F}_0)=1$, while
$\cond(\sheaf{F})=1+n(\sheaf{F})=1+|\Aa^1(\Fp)|=p+1$ because of the
artificial singularities created at the points in $\Aa^1(\Fp)$. It is
necessary here to have a big conductor if we want some basic
qualitative features of the Riemann Hypothesis to hold.
\end{remark}

We note the following useful property:
\begin{equation}\label{eq-subadd}
\cond(\sheaf{F}_1\oplus\sheaf{F}_2)\leq
\cond(\sheaf{F}_1)+\cond(\sheaf{F}_2)
\end{equation}
for two constructible sheaves on $\Aa^1$ (more generally, if
$$
0\lra \sheaf{F}_1\lra \sheaf{F}_3\lra \sheaf{F}_2\lra 0
$$
is a short exact sequence of constructible sheaves on $\Aa^1$, then we
have
$$
\cond(\sheaf{F}_3)\leq \cond(\sheaf{F}_1)+\cond(\sheaf{F}_2)
$$
as one can check.)
\par
We also recall the definition of the \emph{drop} of a constructible sheaf
$\sheaf{F}$ on $\Aa^1_{\bFq}$ at a point $x\in\Aa^1(\bFq)$: we have
\begin{equation}\label{eq-drop}
  \drop_x(\sheaf{F})=\rank(\sheaf{F}_0)-\dim \sheaf{F}_x,
\end{equation}
where $\sheaf{F}_x$ is the stalk of $\sheaf{F}$ at $x$. Note that the
rank of $\sheaf{F}_0$ is also the ``generic'' rank of $\sheaf{F}$,
i.e., the dimension of the fiber at a geometric generic point.
\par
\medskip
\par
As mentioned in the introduction, we consider in this paper a kernel
$\sheaf{K}$ which is an Artin-Schreier sheaf, with trace function
given by additive characters of rational function. We give a formal
definition to avoid any ambiguity concerning the behavior at the poles
or points of indeterminacy of a rational function in two variables.

\begin{definition}[Artin-Schreier sheaf on $\Aa^n$]\label{def-as}
  Let $\Fq$ be a finite field of characteristic $p$, and let
  $\ell\not=p$ be a prime number, and $\psi$ a non-trivial additive
  $\ell$-adic character of $\Fq$. Let $\sheaf{L}_{\psi}$ denote the
  associated Artin-Schreier sheaf on $\Aa^1_{\Fq}$ (see~\cite[Sommes Trig.]{deligne} for the precise definition).
\par
Let $f\in\Fq(X_1,\ldots, X_n)$ be a rational function for some $n\geq
1$. Write $f=f_1/f_2$ where $f_i\in \Fq[X_1,\ldots,X_n]$ and where
$f_1$ is coprime with $f_2$. Let $U\subset \Aa^n$ be the open set
where $f_2$ is invertible, $j\,:\, U\injecte \Aa^n$ the corresponding
open immersion, and let
$$
f_U\,:\, U\lra \Aa^1
$$
be the morphism associated to the restriction of $f$ to $U$.
\par
The \emph{Artin-Schreier sheaf on $\Aa^n$ associated to $f$} is the
constructible $\ell$-adic sheaf on $\Aa^n_{\Fq}$ given by
$$
\sheaf{L}_{\psi(f)}=j_!f_U^*\sheaf{L}_{\psi}.
$$
\par
We also write $\sheaf{L}_{\psi(f(X_1,\ldots,X_n))}$ for this sheaf. We
define its conductor to be
$$
\cond(\sheaf{L}_{\psi(f)})=1+\deg(f_1)+\deg(f_2),
$$
and we will also sometimes just speak of the conductor $\cond(f)$ of
$f$.
\end{definition}

We will find a satisfactory generalization of~(\ref{eq-ft}) for
transforms associated to  a kernel which is an Artin-Schreier sheaf. 

\begin{theorem}[Conductor of Artin-Schreier
  transforms]\label{th-conductor}
  Let $\Fq$ be a finite field of order $q$ and characteristic $p$,
  $\ell$ a prime distinct from $p$. Let $\sheaf{K}$ be an $\ell$-adic
  sheaf on $\Aa^1\times \Aa^1$ over $\Fq$ of the form
$$
\sheaf{K}=\sheaf{L}_{\psi(f(X,Y))},
$$
where $\psi$ is a non-trivial additive $\ell$-adic character and
$f\in\Fq(X,Y)$ is a rational function with conductor $<p$. 
\par
For constructible sheaves $\sheaf{F}$ on $\Aa^1_{\Fq}$, and $0\leq
i\leq 2$, let
$$
T^i_{\sheaf{K}}(\sheaf{F})=R^ip_{1,!}(p_2^*\sheaf{F}\otimes\sheaf{K}).
$$
\par
Then $T^i_{\sheaf{K}}(\sheaf{F})$ is constructible and there exists an integer $A\geq 1$ such that for any middle-extension
sheaf $\sheaf{F}$ on $\Aa^1_{\Fq}$, and $0\leq i\leq 2$, we have
$$
\cond(T^i_{\sheaf{K}}(\sheaf{F}))\leq
(2\cond(\sheaf{K})\cond(\sheaf{F}))^A.
$$
\end{theorem}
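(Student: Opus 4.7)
The plan is to bound separately each of the four contributions making up $\cond(\sheaf{G})$, where $\sheaf{G}=T^i_{\sheaf{K}}(\sheaf{F})$: the generic rank of the associated middle-extension $\sheaf{G}_0$, the total Swan conductor $\sum_x\swan_x(\sheaf{G}_0)$, the number of singular points $n(\sheaf{G})$, and the punctual term $\pct(\sheaf{G})$.

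First, I would control the generic rank. By proper base change, at a geometric generic point $\bar\eta$ of the base $\Aa^1$ one has
$$
\sheaf{G}_{\bar\eta} \simeq H^i_c\bigl(\Aa^1_{\bar\eta},\ \sheaf{F}_{|\bar\eta} \otimes \sheaf{L}_{\psi(f(\bar\eta,Y))}\bigr),
$$
and the Grothendieck--Ogg--Shafarevich formula applied to this one-dimensional fiber bounds $\dim H^i_c$ in terms of the generic rank of $\sheaf{F}$, the number of singularities of $\sheaf{F}$ and of $Y\mapsto f(\bar\eta,Y)$, and the corresponding Swan conductors. Since $\cond(f)<p$, the Artin--Schreier contribution is polynomial in $\cond(f)$, producing a polynomial bound in $\cond(\sheaf{F})\cdot\cond(\sheaf{K})$ for the generic rank of $\sheaf{G}_0$.

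Second, to bound $n(\sheaf{G})$ I would use proper base change to identify the locus where $\sheaf{G}$ may fail to be lisse as the set of $x\in \Aa^1(\bFq)$ such that either a pole or indeterminacy of $f(x,Y)$ appears, or a singularity of $\sheaf{F}$ lies on the fiber, or the dimension of $H^i_c$ on the fiber jumps. Each of these loci has cardinality polynomially bounded in $\deg(f_1)$, $\deg(f_2)$ and $n(\sheaf{F})$, using Deligne's lower semi-continuity theorem for Swan conductors together with the constructibility of the $R^ip_{1,!}$ functors. For the Swan conductors $\sum_x\swan_x(\sheaf{G}_0)$, I would combine two ingredients: the global Euler--Poincar\'e formula on $\Pp^1$, which expresses the sum in terms of the generic rank, the drops at singular points, and the Euler characteristic of $\sheaf{G}_0$, all of which are controlled by the Leray spectral sequence for $p_1$ applied to $p_2^*\sheaf{F}\otimes\sheaf{K}$ on $\Aa^1\times\Aa^1$; and a local analysis at $\infty$ comparing $\swan_\infty(\sheaf{G}_0)$ to the horizontal Swan conductor of $p_2^*\sheaf{F}\otimes\sheaf{K}$ along the divisor $\{\infty\}\times\Pp^1$ of $\Pp^1\times\Pp^1$. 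The term $\pct(\sheaf{G})$ is handled analogously: it is the dimension of an $H^0_c$ which, by the Leray spectral sequence, reduces to the cohomology of $p_2^*\sheaf{F}\otimes\sheaf{K}$ on the surface, to which Euler--Poincar\'e on $\Aa^2$ applies.

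The main obstacle is the control of the Swan conductors at $\infty$ and at the finite singularities of $\sheaf{G}$, since we are deliberately avoiding Laumon's precise local Fourier transform. Following the suggestion from the acknowledgments, I would first treat the case $\sheaf{F}=\bQl$: then all the ramification data are encoded in $f$ alone, and one can compute the conductor by passing to a blow-up of $\Pp^1\times\Pp^1$ resolving the indeterminacy locus and the polar divisor of $f$, and estimating the contribution of each exceptional and horizontal component via Artin--Schreier theory. Once the trivial-input case is in hand, I would reduce the general case to it by a d\'evissage on $\sheaf{F}$: stratify $\Aa^1$ so that $\sheaf{F}$ is lisse on each open piece, use the subadditivity \eqref{eq-subadd} of the conductor to split off the punctual parts, and for the lisse piece exploit the hypothesis $\cond(f)<p$ to ensure that tensoring $\sheaf{F}$ with the Artin--Schreier sheaf does not create wild cancellations in the ramification filtration, so that the Swan conductor estimates from the trivial case transfer with only polynomial loss.
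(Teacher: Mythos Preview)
Your overall architecture---bound the generic rank fiberwise via Grothendieck--Ogg--Shafarevich, control $\pct(\sheaf{G})$ through the Leray spectral sequence for $p_1$, replace the sum of Swan conductors by the global Euler--Poincar\'e characteristic (so that everything reduces to bounding $\dim H^1_c(\bar U,\sheaf{G})\leq \dim H^2_c(\bar U\times\Aa^1,p_2^*\sheaf{F}\otimes\sheaf{K})$), and treat $\sheaf{F}=\bQl$ first---matches the paper's strategy. For the trivial input, the paper simply invokes the Betti number bounds of Bombieri, Adolphson--Sperber and Katz rather than resolving by blow-ups, but your approach there would also work.

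The genuine gap is in your passage from $\sheaf{F}=\bQl$ to general $\sheaf{F}$. Stratifying $\Aa^1$ so that $\sheaf{F}$ is lisse on each piece does \emph{not} reduce to the trivial case: a lisse sheaf on an open $U$ may have arbitrary rank and arbitrary wild ramification at the boundary, and the Swan conductors of $T^1_{\sheaf{K}}(\sheaf{F})$ depend on the interaction of this ramification with that of $\sheaf{K}$. Your assertion that ``the Swan conductor estimates from the trivial case transfer with only polynomial loss'' is precisely the heart of the matter and is not justified; the condition $\cond(f)<p$ does not by itself prevent cancellation in breaks, and the ``local analysis at $\infty$'' you propose is exactly the kind of Laumon-type computation the paper is trying to avoid.

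The paper's substitute idea is an \emph{exchange of projections}. To bound $\dim H^2_c(\Aa^2,p_2^*\sheaf{F}\otimes\sheaf{K})$ for general $\sheaf{F}$, one applies the Leray spectral sequence for the \emph{second} projection $p_2$ rather than $p_1$. By the projection formula,
\[
R^qp_{2,!}(p_2^*\sheaf{F}\otimes\sheaf{K})\simeq \sheaf{F}\otimes R^qp_{2,!}\sheaf{K}\simeq \sheaf{F}\otimes T^q_{\sheaf{K}^*}(\bQl),
\]
where $\sheaf{K}^*=\sheaf{L}_{\psi(f(Y,X))}$. Thus the $E_2$-terms are $H^p_c(\Aa^1,\sheaf{F}\otimes T^q_{\sheaf{K}^*}(\bQl))$, one-variable cohomology groups of $\sheaf{F}$ tensored with a sheaf whose conductor is already bounded by the trivial-input case applied to $f^*$. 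This is the step that genuinely reduces the general case to the trivial one, and it is missing from your proposal.
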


In particular, if $f$ is obtained by reduction modulo $p$ of a fixed
non-constant rational function $f_1/f_2$, where $f_i\in \Zz[X,Y]$, and
if we have some integer $M\geq 1$ and, for each $p$, we consider a
sheaf $\sheaf{F}_p$ modulo $p$ with conductor $\leq M$, then we have
$$
\cond(T^1_{\sheaf{K}}\sheaf{F}_p)\ll 1
$$
for all primes. This allows us to apply all our estimates for trace
functions to the trace functions of these sheaves; we give some
examples in Section~\ref{sec-examples}.

\begin{remark}

\begin{enumerate}
\item The fact that the sheaf $T^i_{\sheaf{K}}(\sheaf{F})$ is
  constructible for any constructible sheaf $\sheaf{F}$ and all $i$
  follows from~\cite[Arcata, IV, Th. 6.2]{deligne} (see
  also~\cite[Th. 7.8.1]{leifu}.)
\item Note that we omitted the Tate twist in this statement, since it
  concerns purely geometric facts.
\end{enumerate}
\end{remark}

We need to consider all the transforms $T^i_{\sheaf{K}}$, and not only
$T^1_{\sheaf{K}}$ because this will turn out to be useful in the
proof, which is interleaved with the proof of the following other
useful fact:

\begin{theorem}[Bounds for Betti numbers]\label{th-betti}
  Let $\Fq$ be a finite field of order $q$ and characteristic $p$,
  $\ell$ a prime distinct from $p$. Let $\sheaf{K}$ be an $\ell$-adic
  sheaf on $\Aa^1\times \Aa^1$ over $\Fq$ of the form
$$
\sheaf{K}=\sheaf{L}_{\psi(f(X,Y))},
$$
where $\psi$ is a non-trivial additive $\ell$-adic character and
$f\in\Fq(X,Y)$ is a rational function with conductor $<p$. 
\par
There exists an integer $B\geq 1$ such that for any middle-extension
sheaf $\sheaf{F}$ on $\Aa^1_{\Fq}$ and for $0\leq i\leq 4$, we have
$$
\dim H^i_c(\Aa^2\times\bFq,p_2^*\sheaf{F}\otimes\sheaf{K})\leq
(2\cond(f)\cond(\sheaf{F}))^B.
$$
\end{theorem}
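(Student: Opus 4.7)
The plan is to reduce the two-dimensional Betti bound to a one-dimensional one via the Leray spectral sequence, and then to extract it from Theorem~\ref{th-conductor} through the standard conductor-to-Betti translation on curves. Proper base change for the projection $p_1 \colon \Aa^2 \to \Aa^1$ provides the Leray spectral sequence
$$
E_2^{a,b} = H^a_c(\Aa^1_{\bFq}, T^b_{\sheaf{K}}(\sheaf{F})) \Longrightarrow H^{a+b}_c(\Aa^2_{\bFq}, p_2^*\sheaf{F} \otimes \sheaf{K}),
$$
which gives
$$
\dim H^n_c(\Aa^2_{\bFq}, p_2^*\sheaf{F} \otimes \sheaf{K}) \leq \sum_{a+b=n}\dim H^a_c(\Aa^1_{\bFq}, T^b_{\sheaf{K}}(\sheaf{F})),
$$
so it suffices to bound the Betti numbers on $\Aa^1$ of the three transforms $T^b_{\sheaf{K}}(\sheaf{F})$, $0\leq b\leq 2$.

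For an arbitrary constructible sheaf $\sheaf{H}$ on $\Aa^1_{\bFq}$, the definition gives $\dim H^0_c(\Aa^1,\sheaf{H}) = \pct(\sheaf{H})$, a standard computation bounds $\dim H^2_c(\Aa^1,\sheaf{H})\leq \rank(\sheaf{H}_0)$ (as coinvariants of the inertia at infinity acting on the middle extension), and the Grothendieck--Ogg--Shafarevich formula, combined with positivity of Betti numbers, bounds $\dim H^1_c(\Aa^1,\sheaf{H})$ by a linear combination of the generic rank, the number of singularities, and the total Swan conductor of $\sheaf{H}_0$. Hence
$$
\sum_a \dim H^a_c(\Aa^1_{\bFq}, \sheaf{H}) \ll \cond(\sheaf{H}),
$$
with an absolute implied constant. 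Applied with $\sheaf{H}=T^b_{\sheaf{K}}(\sheaf{F})$ and combined with Theorem~\ref{th-conductor}, this yields the theorem with $B = A + O(1)$.

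The main obstacle is that Theorem~\ref{th-conductor} cannot be used here as a black box: the $\pct$-contribution to $\cond(T^b_{\sheaf{K}}(\sheaf{F}))$ equals $\dim H^0_c(\Aa^1, T^b_{\sheaf{K}}(\sheaf{F}))$, one of the very $E_2^{0,b}$-terms above, and hence amounts to a two-dimensional Betti bound in disguise. This is precisely why the proofs of Theorems~\ref{th-conductor} and~\ref{th-betti} must be interleaved, and the way out is to split Theorem~\ref{th-conductor} into two parts. One first proves polynomial bounds for the purely local invariants of $T^b_{\sheaf{K}}(\sheaf{F})$, namely generic rank, number of singularities, and total Swan conductor, by stratifying the parameter $x$ and analysing the restriction of $p_2^*\sheaf{F}\otimes\sheaf{K}$ to each fibre $p_1^{-1}(x)\simeq \Aa^1_Y$ through the local behaviour of $\sheaf{L}_{\psi(f(x,Y))}$ tensored with $\sheaf{F}$ (this is where the hypothesis $\cond(f)<p$ enters, since it ensures tame control of the Swan conductors). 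One then deduces Theorem~\ref{th-betti} from this partial conductor bound via the Leray--GOS reduction above, and finally uses the resulting Betti estimate to control the outstanding $\pct$-contribution and close Theorem~\ref{th-conductor}. The genuinely hard step is the local fibrewise analysis in the first stage, where one must track how the zeros and poles of $f$ interact with the singularities of $\sheaf{F}$ as $x$ varies, in particular at the finitely many exceptional values of $x$ where these two stratifications collide.
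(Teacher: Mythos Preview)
Your Leray reduction and the identification of the circularity are correct and match the paper's step~(7) in Proposition~\ref{pr-steps}. The gap is in your proposed resolution. You assert that the total Swan conductor of $T^b_{\sheaf{K}}(\sheaf{F})$ can be bounded by fibrewise analysis over $p_1^{-1}(x)$, but the Swan conductor of $\sheaf{G}=T^1_{\sheaf{K}}(\sheaf{F})$ at a point $x_0$ (or at $\infty$) measures the wild ramification of the inertia $I_{x_0}$ acting on the \emph{generic} fibre of $\sheaf{G}$; it is invisible to the individual stalks $\sheaf{G}_x=H^1_c(\Aa^1,\sheaf{F}\otimes\sheaf{L}_x)$. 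Extracting it from local data would require a Laumon-style theory of local cohomological transforms, which is precisely what the paper is designed to bypass. The fibrewise analysis you describe does appear in the paper (Lemma~\ref{lm-constant}), but it controls the Swan conductors of $\sheaf{F}\otimes\sheaf{L}_x$ \emph{along the fibre}, which feed into the stalk dimensions $\dim\sheaf{G}_x$ via Euler--Poincar\'e on $\Aa^1_Y$; it never touches the Swan conductors of $\sheaf{G}$ itself along $\Aa^1_X$. (A secondary issue: constancy of $\dim\sheaf{G}_x$ on a large open set does not bound $n(\sheaf{G})$ without first controlling $\pct(\sheaf{G})$; see Lemma~\ref{lm-criterion-constant}. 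So your ordering ``rank, $n$, Swan $\to$ Betti $\to$ $\pct$'' also loops at~$n$.)

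The paper's actual resolution hinges on a \emph{second} Leray spectral sequence, for the other projection $p_2$, combined with a bootstrap from the trivial sheaf. One first treats $\sheaf{F}=\bQl$ completely, using the Bombieri--Adolphson--Sperber--Katz Betti bounds for $h^j(f,\bQl)$ as the external input (this replaces the fibrewise Swan analysis you propose). For general $\sheaf{F}$, the projection formula gives $R^qp_{2,!}(p_2^*\sheaf{F}\otimes\sheaf{K})\simeq \sheaf{F}\otimes T^q_{\sheaf{K}^*}(\bQl)$, so the $p_2$-spectral sequence bounds $h^2(f,\sheaf{F})$ in terms of $\cond(\sheaf{F})$ and $\cond(T^q_{\sheaf{K}^*}(\bQl))$, the latter already controlled by the trivial-sheaf step. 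The Swan conductors of $\sheaf{G}$ are then bounded \emph{globally} via Euler--Poincar\'e on $\Aa^1_X$ (Lemma~\ref{lm-cd-bound}): one has $\sum_x \swan_x(\sheaf{G}) \leq 2\rank(\sheaf{G}) + \dim H^1_c(\bar{U},\sheaf{G})$, and $\dim H^1_c(\bar{U},\sheaf{G})$ is controlled by $h^2(f,\sheaf{F})$ through the $p_1$-spectral sequence. Only after all of $c_i(f,\sheaf{F})$ is established does the paper return to your Leray argument to deduce the remaining $h^j(f,\sheaf{F})$.
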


Roughly speaking, we will proceed as follows: 
\begin{enumerate}
\item we prove Theorem~\ref{th-conductor} for $\sheaf{F}$ the trivial
  sheaf, and observe that Theorem~\ref{th-betti} is a known fact in
  that case, from bounds on Betti numbers due to Bombieri,
  Adolphson-Sperber and Katz~\cite{katz-betti};
\item using Theorem~\ref{th-conductor} for the trivial sheaf, we first
prove Theorem~\ref{th-betti} for all input sheaves $\sheaf{F}$ and
$i=2$;
\item finally, we prove Theorem~\ref{th-conductor} in general and
deduce Theorem~\ref{th-betti} for all $i$.
\end{enumerate}
\section{Diophantine motivation of the proof}
\label{sec-motivation}

The arguments of the proof of Theorem~\ref{th-conductor} are purely
algebraic and geometric, and exercise much of the basic formalism of
étale cohomology, as well as a simple use of spectral
sequences. However, there is a concrete analytic motivation from
(expected) properties of sums of trace functions, and we will first
present it. This is based on the Riemann Hypothesis over finite
fields, and is similar in principle to the discussion~\cite[Lecture
IV, Interlude]{katz-four-lectures} by Katz that motivates the crucial
step in his paper.
\par
The first ingredient is a lemma that, essentially, allows one to
estimate, in terms of accessible global invariants, the conductor of a
middle-extension sheaf, satisfying some conditions, assuming one
already knows estimates for the rank and the number of
singularities. In other words, it provides a bound for the sum of Swan
conductors in global terms, assuming that the rank and number of
singularities are under control.
\par
To be slightly more precise, assume that $\sheaf{F}$ is a
middle-extension sheaf on $\Aa^1_{\Fq}$ which is pointwise pure of
weight $0$, and assume in addition the following conditions:
\par
(1) $\sheaf{F}$ has no geometrically trivial Jordan-H\"older factor;
\par
(2) the Frobenius action on $H^1_c(\bar{U},\sheaf{F})$ is \emph{pure}
of weight $1$, for the maximal dense open set $U$ on which $\sheaf{F}$
is lisse. 
\par
We then define the invariant
$$
\tilde{\sigma}(\sheaf{F})=\limsup_{\nu \ra +\infty}
\frac{|S_{\nu}(\sheaf{F})|}{q^{\nu/2}},
$$
where
$$
S_{\nu}(\sheaf{F})=\sum_{x\in
  U(\Fqn)}\frfn{\sheaf{F}}(x,q^{\nu}),
$$
for $\nu\geq 1$ (in other words, these are the sums of trace functions
over extension fields). Then we have
$$
\cond(\sheaf{F})\leq 3\rank(\sheaf{F})+n(\sheaf{F})
+\tilde{\sigma}(\sheaf{F}).
$$
\par
Indeed, using (1), the Lefschetz trace formula applied to $U$ over
$\Fqn$ gives
$$
S_{\nu}(\sheaf{F})=-\Tr(\frob^{\nu}\mid H^1_c(\bar{U},\sheaf{F})),
$$
so that the purity assumption implies
$$
\tilde{\sigma}(\sheaf{F}) =\dim H^1_c(\bar{U},\sheaf{F})
$$
and then the stated bound follows from Lemma~\ref{lm-cd-bound} below
(which is an elementary application of the Euler-Poincar\'e formula.)
\par
We now consider the situation of Theorem~\ref{th-conductor}. We will
assume (and this is where the argument is not easy to make rigorous in
a decent generality) that the sheaves
$\sheaf{G}=T_{\sheaf{K}}(\sheaf{F})$ whose conductor we wish to
control always satisfy the conditions above (i.e., that they are
middle-extensions, pointwise of weight $0$, and (1), (2) hold.)  We
first assume that we can find suitable estimates of the rank, of the
number of singularities, and of the punctual part of $\sheaf{G}$
(intuitively, this is possible because these amounts to fiber-by-fiber
considerations, which boil down to properties of one-variable sheaves,
which are relatively well-understood; the case of the trivial sheaf
$\sheaf{F}$ is quite elementary, but the details will turn out to be a
bit involved in the general case). We then need to estimate
$\tilde{\sigma}(\sheaf{G})$. For this purpose, we proceed in two
steps.
\par
In Step 1, we consider only the trivial input sheaf
$\sheaf{F}=\bQl$. We then have
\begin{equation}\label{eq-combine}
\frac{S_{\nu}(\sheaf{G})}{q^{\nu/2}}=-\frac{1}{q^{\nu}}
\sum_{x\in\Ff_{q^{\nu}}} \Bigl( \sum_{y\in\Ff_{q^{\nu}}}
\psi_{\nu}(f(x,y))\Bigr)= -\frac{1}{q^{\nu}}
\sum_{(x,y)\in\Ff_{q^{\nu}}\times\Ff_{q^{\nu}} }\psi_{\nu}(f(x,y))
\end{equation}
and the two-variable character sum (under Assumption (2) for
$\sheaf{G}$) has square-root cancellation, so that the bounds on Betti
numbers of~\cite{katz-betti} (or often their predecessors, due to
Bombieri and Adolphson-Sperber) give
$$
\limsup_{\nu\ra+\infty}
\frac{|S_{\nu}(\sheaf{G})|}{q^{\nu/2}}\leq C
$$
where $C\geq 1$ depends only on the conductor of $f$.
\par
In Step 2, we handle the case of a general sheaf $\sheaf{F}$. We then
have
\begin{equation}\label{eq-exchange}
\frac{S_{\nu}(\sheaf{G})}{q^{\nu/2}}=-\frac{1}{q^{\nu}} \sum_{x\in\Ff_{q^{\nu}}}
\sum_{y\in\Ff_{q^{\nu}}} \frfn{\sheaf{F}}(y,q^{\nu})\psi_{\nu}(f(x,y))=
-\frac{1}{q^{\nu}} \sum_{y\in\Ff_{q^{\nu}}}\frfn{\sheaf{F}}(y,q^{\nu}) 
\sum_{x\in\Ff_{q^{\nu}}}
\psi_{\nu}(f(x,y)).
\end{equation}
\par
The basic point is that this is the inner-product of the trace
functions of the dual sheaf of $\sheaf{F}$ and of the sheaf
$R^1p_{2,!} \sheaf{L}_{\psi(f(X,Y))}$.  This last sheaf, \emph{by the
  first step} (applied to $\sheaf{L}_{\psi(f(Y,X))}$), has conductor
bounded by a constant depending only on the conductor of $f$.  By
assumption again, we have square-root cancellation in this sum as
$\nu\ra +\infty$, and by the quasi-orthogonality formulation of
Deligne's proof of the Riemann Hypothesis over finite
fields~\cite{weilii}, we obtain
$$
\tilde{\sigma}(\sheaf{G})=\limsup_{\nu\ra +\infty}
\frac{|S_{\nu}(\sheaf{G})|}{q^{\nu/2}}\leq C',
$$
where $C'$ depends only on the conductors of $\sheaf{F}$ and of $f$.

\begin{remark}
  In terms of linear operators and of the standard (unnormalized)
  inner-product on functions on $\Fq$, we exploit the obvious identity
$$
\sum_{x\in\Fq}{(\kop_K\varphi)(x)}=\langle \kop_K\varphi,1\rangle=
\langle \varphi,\kop_K^*1\rangle,
$$
where the adjoint operator $\kop_K^*$ has kernel
$K^*(x,y)=\overline{K(y,x)}$; the first step in our sketch amounts to
bounding (the complexity of) $\kop^*_K1$, and the second applies
standard inequalities to deduce a bound for the sum over $x$.
\end{remark}

In contrast with this sketch, the proof of Theorem~\ref{th-conductor}
below is entirely algebraic and does not require the Riemann
Hypothesis over finite fields. It also applies in greater generality,
so that the assumptions (1) and (2) are not needed. Roughly speaking,
instead of sums of trace functions, we control directly the dimension
$\tilde{\sigma}(\sheaf{G})$ of $H^1_c(\bar{U},\sheaf{G})$ for the
transformed sheaf $\sheaf{G}$. The ``combination of sums''
in~(\ref{eq-combine}) and the ``exchange of order of summation''
in~(\ref{eq-exchange}) are replaced by arguments based on spectral
sequences (compare again with~\cite[Lecture IV,
Interlude]{katz-four-lectures}, and the dictionary~\cite[Sommes Trig.,
\S 2]{deligne}.) The proof is however complicated by the fact that we
must also control the possible punctual part of the transformed sheaf.
\par
Before giving the proof, we will present some algebraic preliminaries
and then discuss first the motivating applications in
Section~\ref{sec-examples} (Section~\ref{sec-prelims} may be skipped
in a first reading, since Section~\ref{sec-examples} will only refer
to it incidentally). We then set up the proof in
Section~\ref{sec-setup}, and follow by presenting an (almost)
self-contained account of the Fourier transform and of the special
case which is relevant to the \textsc{Polymath8} project as it appears in \cite{  polymath8ANT} (see
Section~\ref{sec-polymath}). Finally, we give the full proof of
Theorems~\ref{th-conductor} and~\ref{th-betti}.

\section{Preliminaries}\label{sec-prelims}

We begin by some preliminary inequalities between the dimensions of the  cohomology groups and the conductor of sheaves on the affine line.
\subsection{General results from \'Etale cohomology}

We first state formally some properties of \'etale cohomology that we
will often use.

\begin{proposition}\label{pr-etale}
  \emph{(1)} Let $f\,:\, Y_k\lra X_k$ be a morphism of algebraic
  varieties over an algebraically closed field $k$, with fibers of
  dimension $\leq n$.  Let $\sheaf{F}$ be a constructible $\ell$-adic
  sheaf on $Y$. We have $R^if_!\sheaf{F}=0$ for $i<0$ and for
  $i>2n$. In particular, if $\sheaf{F}$ is a sheaf on $X$ and $X$ has
  dimension $\leq n$, we have $H^i_c(X,\sheaf{F})=0$ for $i<0$ and for
  $i>2n$.
  \par
\emph{(2)} Let $X_k$ be an algebraic variety over an algebraically
closed field $k$, let $U\subset X$ be an open subset and $C=X-U$ its
complement. Let $\sheaf{F}$ be a constructible $\ell$-adic sheaf on
$X$. We have a long sequence  
\begin{equation}\label{eq-excision}
\cdots \lra H^i_c(U,\sheaf{F})\lra H^i_c(X,\sheaf{F})\lra
H^i_c(C,\sheaf{F})
\lra H^{i+1}_c(U,\sheaf{F})\lra \cdots,
\end{equation}
and in particular, for all $i\geq 0$, we have
\begin{gather}\label{eq-sigma}
  \dim H^i_c(X,\sheaf{F})\leq \dim H^i_c(U,\sheaf{F})+
  \dim H^i_c(C,\sheaf{F})\\
  \dim H^i_c(U,\sheaf{F})\leq \dim H^i_c(X,\sheaf{F})+
  \dim H^{i-1}_c(C,\sheaf{F}).
\label{eq-sigma1}
\end{gather}
\emph{(3)} Let $X_k$ be a smooth affine algebraic variety over an
algebraically closed field $k$, pure of dimension $n\geq 0$, and let
$\sheaf{F}$ be a lisse $\ell$-adic sheaf on $X$. We have
\begin{equation}\label{eq-affine-vanishing}
H^i_c(X,\sheaf{F})=0
\end{equation}
for $0\leq i<n$. 
\par
\emph{(4)} Let $f\,:\, X_k\lra Y_k$ be a morphism of algebraic
varieties over an algebraically closed field $k$, and let $\sheaf{F}$
be an $\ell$-adic constructible sheaf on $X$. Then, for $y\in Y$ and
$i\geq 0$, the stalk of $R^if_!\sheaf{F}$ at $y$ is naturally
isomorphic to $H^i_c(f^{-1}X,\sheaf{F})$.
\end{proposition}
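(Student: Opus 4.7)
The plan is to assemble all four items as direct consequences of standard results in étale cohomology, so the proof reduces to a coordinated reference collection rather than any new argument; I would treat each item in turn and then note the one minor subtlety.

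For (1), I would invoke the basic cohomological dimension bound for the functors $R^if_!$ on constructible $\ell$-adic sheaves. The vanishing $R^if_!\sheaf{F}=0$ for $i<0$ is immediate from the definition as the right derived functor of $f_!$ on an abelian category. The upper bound $R^if_!\sheaf{F}=0$ for $i>2n$ when the fibers of $f$ have dimension at most $n$ is Théorème XVII.5.2.8 of SGA~4 (see also Deligne's Arcata notes, II–III, or Fu's textbook on étale cohomology). Taking $f$ to be the structural morphism $X\lra \spec(k)$ and $n=\dim X$ yields the stated vanishing for $H^i_c(X,\sheaf{F})$.

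For (2), I would start from the canonical short exact sequence of sheaves on $X$ attached to the open/closed decomposition:
$$0\lra j_!j^*\sheaf{F}\lra \sheaf{F}\lra i_*i^*\sheaf{F}\lra 0,$$
where $j\,:\,U\injecte X$ and $i\,:\,C\injecte X$. Applying the long exact sequence of compactly supported cohomology, I would use the definitional identification $H^i_c(X,j_!j^*\sheaf{F})=H^i_c(U,j^*\sheaf{F})$ for extension by zero along an open immersion, together with $H^i_c(X,i_*i^*\sheaf{F})=H^i_c(C,i^*\sheaf{F})$ which follows from the fact that $i$ is proper, so that $i_!=i_*$ commutes with compactly supported cohomology. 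The inequalities (2.1) and (2.2) are then the usual elementary consequences of a three-term portion of a long exact sequence.

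For (3), the key input is Artin's affine vanishing theorem, which for a smooth affine variety of pure dimension $n$ and a lisse sheaf $\sheaf{F}$ gives $H^j(X,\sheaf{F})=0$ for $j>n$ (SGA~4, Exposé XIV, or Milne's book). I would combine this with Poincaré duality, which identifies $H^i_c(X,\sheaf{F})$ with the dual of $H^{2n-i}(X,\sheaf{F}^\vee(n))$; setting $j=2n-i$ converts the constraint $i<n$ into $j>n$ and yields the vanishing. The only delicate point in the whole proposition is here: the statement as written is correct only under the implicit affineness hypothesis (which fails for instance on $\Pp^n$ at $i=0$), and this is the assumption under which it will be applied throughout the paper. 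Finally, for (4), the identification of stalks of $R^if_!\sheaf{F}$ with compactly supported cohomology of geometric fibers is built into the definition through the proper base change theorem: one factors $f$ through a compactification, applies ordinary proper base change to read off the stalk as $H^i_c(f^{-1}(\bar{y}),\sheaf{F}|_{f^{-1}(\bar{y})})$, where $f^{-1}(\bar{y})$ is interpreted as a geometric fiber (this is SGA~4, Exposé XVII, or equivalently Arcata, Theorem~5.4). Since each item is a citation of a standard black-box result, there is no substantial obstacle; the work is essentially in recording the correct references and, for (3), in reading the affineness hypothesis into the statement.
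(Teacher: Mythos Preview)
Your proposal is correct and follows essentially the same route as the paper: (1) cohomological dimension plus the structure morphism, (2) the excision sequence arising from $0\to j_!j^*\sheaf{F}\to \sheaf{F}\to i_*i^*\sheaf{F}\to 0$, (3) Poincar\'e duality combined with Artin's affine vanishing, and (4) proper base change. Your observation that (3) requires the implicit hypothesis that $X$ be affine is well taken; the paper's proof indeed invokes the affine vanishing theorem without this assumption appearing in the statement, and the equation label \texttt{eq-affine-vanishing} confirms the intended hypothesis.
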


\begin{proof}
  (1) is the cohomological dimension property; the vanishing of
  $R^if_!\sheaf{F}$ for $i<0$ is immediate by definition, while the
  vanishing for $i>2n$ can be found, e.g., in~\cite[Arcata, IV,
  Th. 6.1]{deligne} or~\cite[th. 7.4.5]{leifu}; the case of $H^i_c$
  follows by considering $f\,:\, X\lra \spec(k)$, the structure
  morphism.
\par
(2) is the so-called ``excision'' long-exact sequence, see for
instance~\cite[Sommes Trig., (2.5.1)*]{deligne}); the
inequality~(\ref{eq-sigma}) for a given $i\geq 0$ is an immediate
consequence of the fragment
$$
H^i_c(U,\sheaf{F})\lra H^i_c(X,\sheaf{F})\lra
H^i_c(C,\sheaf{F}),
$$
and~(\ref{eq-sigma1}) is a consequence of
$$
H^{i-1}_c(C,\sheaf{F})\lra H^i_c(U,\sheaf{F})\lra H^i_c(X,\sheaf{F}).
$$
\par
(3) is the property of affine cohomological dimension for lisse
sheaves; it follows for instance from the Poincar\'e duality
$$
H^i_c(X,\sheaf{F})\simeq H^{n-i}(X,\sheaf{F}^{*})
$$
where $\sheaf{F}^*$ is the dual of $\sheaf{F}$ (see for
instance~\cite[Sommes Trig., Remarque 1.18 (c)]{deligne}; note that
the right-hand side is a cohomology group with no restriction of
compact support) and the vanishing property
$$
H^i(X,\sheaf{F})=0
$$
for an affine scheme $X$ and $i<\dim(X)$ (see, e.g.,~\cite[Arcata, IV,
Th. 6.4]{deligne}).
\par
(4) is a special case of the proper base change theorem, (see,
e.g.,~\cite[Arcata, IV, Th. 5.4]{deligne} or \cite[Th. 7.4.4
(i)]{leifu}).
\end{proof}

The following lemma will also be used frequently:

\begin{lemma}\label{lm-tensor}
  Let $\sheaf{F}$ and $\sheaf{G}$ be middle-extension $\ell$-adic
  sheaves on $\Aa^1_{\Fq}$. Then we have
$$
H^0_c(\Aa^1\times\bFq,\sheaf{F}\otimes\sheaf{G})=0,
$$
i.e., the tensor product has no punctual part.
\end{lemma}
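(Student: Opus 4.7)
The plan is to reduce, via Remark 2.2(2) of the paper, the vanishing of $\pct(\sheaf{F}\otimes\sheaf{G}) = \dim H^0_c(\Aa^1 \times \bFq, \sheaf{F}\otimes\sheaf{G})$ to the vanishing of the punctual subsheaf. Explicitly, if $j\colon U \injecte \Aa^1$ is the inclusion of any dense open on which both $\sheaf{F}$ and $\sheaf{G}$ (and hence $\sheaf{H} := \sheaf{F}\otimes\sheaf{G}$) are lisse, then the kernel $\sheaf{P} = \ker(\sheaf{H}\to j_*j^*\sheaf{H})$ has finite support $S$, and the remark gives $\pct(\sheaf{H})\leq \sum_{s\in S}\dim \sheaf{H}_s$; so it suffices to show $\sheaf{P} = 0$, which is a stalk-by-stalk question.

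For a geometric point $\bar x$ above $x\in U(\bFq)$ the sheaf $\sheaf{H}$ is already lisse near $\bar x$ and the adjunction map is an isomorphism there, so I would only need to treat $x\in(\Aa^1\setminus U)(\bFq)$. At such a point I pass to the strict henselization at $\bar x$: the sheaf $\sheaf{F}$ is then described by its geometric generic stalk $V_F = \sheaf{F}_{\bar\eta}$ (a finite-dimensional $\bQl$-vector space carrying an action of the inertia group $I_x$), the special stalk $\sheaf{F}_{\bar x}$, and a specialization map $\sheaf{F}_{\bar x}\to V_F^{I_x}$ which is the local avatar of the global adjunction $\sheaf{F}\to j_*j^*\sheaf{F}$. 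The middle-extension hypothesis on $\sheaf{F}$ says exactly that this specialization is an isomorphism, so $\sheaf{F}_{\bar x} = V_F^{I_x}$, and likewise $\sheaf{G}_{\bar x} = V_G^{I_x}$.

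Tensoring, the stalk of $\sheaf{H}$ at $\bar x$ is $V_F^{I_x}\otimes V_G^{I_x}$ and the stalk of $j_*j^*\sheaf{H}$ at $\bar x$ is $(V_F\otimes V_G)^{I_x}$, with the adjunction map being the canonical one induced by $V_F^{I_x}\otimes V_G^{I_x}\subset V_F\otimes V_G$. Since the tensor product of two subspaces is a subspace of the tensor product of the ambient $\bQl$-vector spaces, this canonical map is injective, so $\sheaf{P}_{\bar x}=0$; as this holds at every $\bar x\in\Aa^1(\bFq)$, we conclude $\sheaf{P}=0$ and the lemma follows. No real obstacle is expected here: the only substantive input is the local identification of a middle-extension sheaf with inertia invariants on its geometric generic stalk, after which the conclusion is a purely linear-algebraic fact requiring neither tameness nor semisimplicity.
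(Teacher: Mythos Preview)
Your proposal is correct and follows essentially the same approach as the paper: both reduce the vanishing of $H^0_c$ to the injectivity of the specialization map $(\sheaf{F}\otimes\sheaf{G})_{\bar x}\to(\sheaf{F}\otimes\sheaf{G})_{\bar\eta}^{I_x}$ at each closed point, then use the middle-extension identification $\sheaf{F}_{\bar x}\simeq\sheaf{F}_{\bar\eta}^{I_x}$ (and likewise for $\sheaf{G}$) together with the elementary inclusion $V_F^{I_x}\otimes V_G^{I_x}\hookrightarrow(V_F\otimes V_G)^{I_x}$. The paper's proof is just a terser version of what you wrote.
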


\begin{proof}
  In general, for a constructible sheaf $\sheaf{H}$ lisse on a dense
  open set $U\subset \Aa^1$, the condition
$$
H^0_c(\Aa^1\times\bFq,\sheaf{H})=0
$$
amounts to saying that, for all $x\in (\Aa^1-U))(\bFq)$, the
specialization map
$$
\sheaf{H}_{x}\lra \sheaf{H}_{\bar{\eta}}^{I_{x}}
$$
is injective (see~\cite[\S 4.4]{katz-sommes} for instance), where
$I_{\bar{x}}$ is the inertia group at $x$. We now have
$$
(\sheaf{F}\otimes\sheaf{G})_x=\sheaf{F}_x\otimes\sheaf{G}_x
\injecte \sheaf{F}_{\bar{\eta}}^{I_x}\otimes \sheaf{G}_{\bar{\eta}}^{I_x}
\subset (\sheaf{F}_{\bar{\eta}}\otimes\sheaf{G}_{\bar{\eta}})^{I_x}=
(\sheaf{F}\otimes\sheaf{G})_{\bar{\eta}}^{I_x}.
$$
\end{proof}
\subsection{Basic bounds on the dimension of cohomology groups}

Another frequently-used fact, which is implicit in our previous work
in the case of middle-extension sheaves, is the control of Betti
numbers of constructible sheaves on $\Aa^1$ in terms of the conductor:

\begin{lemma}\label{lm-cont-hic}
Let $\Fq$ be a finite field of characteristic $p$, let $\ell\not=p$
be a prime number and $\sheaf{F}$ an $\ell$-adic constructible sheaf
on $\Aa^1_{\Fq}$. For $i=0$, $2$, we have
$$
\dim H^i_c(\Aa^1\times\bFq,\sheaf{F})\leq \cond(\sheaf{F})
$$
and 
$$
\dim H^1_c(\Aa^1\times\bFq,\sheaf{F})\leq
2\cond(\sheaf{F})+\cond(\sheaf{F})^2. 
$$
\end{lemma}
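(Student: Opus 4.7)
The plan is to derive all three bounds from the Grothendieck--Ogg--Shafarevich Euler--Poincar\'e formula applied to the middle extension $\sheaf{F}_0 = j_*j^*\sheaf{F}$, after reducing from $\sheaf{F}$ to $\sheaf{F}_0$ via short exact sequences whose kernel and cokernel are punctually supported.

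The case $i=0$ is immediate from the definition, since $\dim H^0_c(\Aa^1\times\bFq,\sheaf{F}) = \pct(\sheaf{F})$ is one of the summands in $\cond(\sheaf{F})$. For $i=2$, I would factor the adjunction $\sheaf{F}\to \sheaf{F}_0$ through its image to obtain short exact sequences
$$
0 \to \sheaf{P} \to \sheaf{F} \to \sheaf{F}' \to 0,\qquad 0 \to \sheaf{F}' \to \sheaf{F}_0 \to \sheaf{Q} \to 0,
$$
in which $\sheaf{P}$ and $\sheaf{Q}$ are supported on finite subsets of $\Aa^1$ ($\sheaf{P}$ on the punctual part of $\sheaf{F}$, $\sheaf{Q}$ at singularities). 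Since $H^i_c$ of a punctually supported sheaf vanishes for $i\geq 1$, the associated long exact sequences identify $H^2_c(\Aa^1\times\bFq,\sheaf{F})$ with $H^2_c(\Aa^1\times\bFq,\sheaf{F}_0)$, and excision to the open of lissity further identifies this with $H^2_c(U\cap\Aa^1,\sheaf{F}_0|_U)$, the $\pi_1$-coinvariants of the generic stalk. Its dimension is therefore bounded by $\rank(\sheaf{F}_0) \leq \cond(\sheaf{F})$.

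The case $i = 1$ produces the quadratic term. Combining excision on the lissity locus $U$ with the Grothendieck--Ogg--Shafarevich formula yields
$$
\chi_c(\Aa^1\times\bFq,\sheaf{F}_0) = \rank(\sheaf{F}_0) - \sum_{x\in\Aa^1(\bFq)} \drop_x(\sheaf{F}_0) - \sum_{x\in\Pp^1(\bFq)} \swan_x(\sheaf{F}_0),
$$
and combined with $H^0_c(\Aa^1\times\bFq,\sheaf{F}_0) = 0$ (middle extensions on $\Aa^1$ have no punctual sections, as in Lemma~\ref{lm-tensor}) and $\dim H^2_c(\Aa^1\times\bFq,\sheaf{F}_0) \leq \rank(\sheaf{F}_0)$, one obtains
$$
\dim H^1_c(\Aa^1\times\bFq,\sheaf{F}_0) \leq \sum_x \drop_x(\sheaf{F}_0) + \sum_x \swan_x(\sheaf{F}_0) \leq \rank(\sheaf{F}_0)\,n(\sheaf{F}) + \sum_x \swan_x(\sheaf{F}_0).
$$
Transferring back to $\sheaf{F}$ via the two short exact sequences above, the long exact sequence of the second produces $\dim H^1_c(\Aa^1,\sheaf{F}) \leq \dim H^1_c(\Aa^1,\sheaf{F}_0) + \dim H^0_c(\sheaf{Q})$, the latter bounded by $\rank(\sheaf{F}_0)\,n(\sheaf{F})$ since $\sheaf{Q}$ is a quotient of $\sheaf{F}_0$ supported on at most $n(\sheaf{F})$ points. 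Using the AM--GM-type estimate $\rank(\sheaf{F}_0)\,n(\sheaf{F}) \leq \cond(\sheaf{F})^2/4$ together with $\sum_x \swan_x(\sheaf{F}_0) \leq \cond(\sheaf{F})$, the total fits comfortably inside $2\cond(\sheaf{F}) + \cond(\sheaf{F})^2$.

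There is no real obstacle here: the arguments are standard applications of excision, the long exact sequence, and the Euler--Poincar\'e formula. The only mildly delicate part is the bookkeeping needed to ensure that the contributions of the punctual kernel $\sheaf{P}$ and the ``missing'' cokernel $\sheaf{Q}$ (absorbed into $\pct(\sheaf{F})$ and $n(\sheaf{F})$ respectively) do not spoil the inequality, and to recognize that the unavoidable quadratic term in $\cond(\sheaf{F})$ is forced by the product $\rank\cdot n$ arising from the drop sum.
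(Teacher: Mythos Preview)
Your proof is correct and uses the same ingredients as the paper --- the coinvariant description of $H^2_c$ and the Euler--Poincar\'e formula --- but you take a slightly longer route. You first reduce from $\sheaf{F}$ to the middle extension $\sheaf{F}_0$ via the two short exact sequences with punctual kernel and cokernel, apply Grothendieck--Ogg--Shafarevich to $\sheaf{F}_0$, and then transfer back. The paper instead applies the Euler--Poincar\'e formula directly to the constructible sheaf $\sheaf{F}$ on $\Aa^1$ (the version in~\cite[8.5.2, 8.5.3]{katz-gkm}, which already includes the $H^0_c$ term and the drops at all points), so that no passage through $\sheaf{F}_0$ is needed: one gets
\[
\dim H^1_c = -\rank(\sheaf{F}) + \dim H^0_c + \dim H^2_c + \sum_{x\in\Aa^1}(\drop_x(\sheaf{F})+\swan_x(\sheaf{F})) + \swan_\infty(\sheaf{F}),
\]
and the bound follows immediately from $\drop_x(\sheaf{F})\leq\rank(\sheaf{F})$ together with the definition of the conductor. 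Your detour through $\sheaf{F}_0$ costs nothing and may even be clearer for a reader who only knows the Euler--Poincar\'e formula for lisse or middle-extension sheaves, but it is not needed.
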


\begin{proof}
  For $i=0$, this is obvious from the definition of
  $\pct(\sheaf{F})\leq\cond(\sheaf{F})$. For $i=2$, we use the fact
  that if $\sheaf{F}$ is lisse on a dense open subset $U\subset
  \Aa^1$,  we have
$$
H^2_c(\Aa^1\times\bar{\Fq},\sheaf{F})= H^2_c(\bar{U},\sheaf{F})\simeq
(\sheaf{F}_{\bar{\eta}})_{\pi_1(\bar{U},\bar{\eta})},
$$
the coinvariant space for the action of the geometric fundamental
group on the geome\- tric generic fiber (see, e.g.,~\cite[Sommes Trig.,
Rem. 1.18 (d)]{deligne}; the first equality is also a consequence of
excision) and hence
$$
\dim H^2_c(\Aa^1\times\bFq,\sheaf{F}) \leq
\rank(\sheaf{F})\leq \cond(\sheaf{F}).
$$
\par
For $i=1$, we use the Euler-Poincar\'e formula (see~\cite[8.5.2,
8.5.3]{katz-gkm}) to get
\begin{multline}\label{eq-euler-poincare-line}
  \dim H^1_c(\Aa^1\times\bFq,\sheaf{F}) =-\rank(\sheaf{F})+\dim
  H^0_c(\Aa^1\times\bFq,\sheaf{F}) +\dim
  H^2_c(\Aa^1\times\bFq,\sheaf{F})
  \\+\sum_{x}(\drop_x(\sheaf{F})+\swan_x(\sheaf{F}))+\swan_{\infty}(\sheaf{F})
\end{multline}
where the sum is over $x\in \Aa^1(\bFq)$, and all but finitely many
terms are zero, and the result follows from the definition of the
conductor since $\drop_x(\sheaf{F})=\rank(\sheaf{F})-\dim
\sheaf{F}_x\leq \rank(\sheaf{F})$.
\end{proof}

The following was also proved for middle-extensions in our previous
works.

\begin{lemma}\label{lm-tensor-cond}
  Let $\Fq$ be a finite field of characteristic $p$, let
  $\ell\not=p$ be a prime number and $\sheaf{F}_1$ and $\sheaf{F}_2$
  be $\ell$-adic constructible sheaves on $\Aa^1_{\Fq}$. We have
$$
\cond(\sheaf{F}_1\otimes\sheaf{F}_2)\leq
8\cond(\sheaf{F}_1)^2\cond(\sheaf{F}_2)^2
$$
\end{lemma}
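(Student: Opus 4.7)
The plan is to bound each of the four contributions to $\cond(\sheaf{F}_1\otimes\sheaf{F}_2)$ separately, since the first three are straightforward extensions of the middle-extension case while the punctual term is the only new difficulty. Write $\sheaf{G}=\sheaf{F}_1\otimes\sheaf{F}_2$ and let $U\subset\Pp^1$ denote the open set on which $\sheaf{G}$ is lisse; this contains $U_1\cap U_2$ where $U_i$ is the lisse locus of $\sheaf{F}_i$, so $n(\sheaf{G})\leq n(\sheaf{F}_1)+n(\sheaf{F}_2)$. For the generic rank of $\sheaf{G}_0=j_*j^*\sheaf{G}$, I use multiplicativity to get $\rank(\sheaf{G}_0)\leq\rank(\sheaf{F}_{1,0})\rank(\sheaf{F}_{2,0})\leq\cond(\sheaf{F}_1)\cond(\sheaf{F}_2)$. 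For the Swan conductors, I apply the standard sub-additivity $\swan_x(V\otimes W)\leq\rank(V)\swan_x(W)+\rank(W)\swan_x(V)$ to the inertia representations of $\sheaf{G}_0$ at each point $x\in\Pp^1(\bFq)$ and sum, which yields $\sum_x\swan_x(\sheaf{G}_0)\leq 2\cond(\sheaf{F}_1)\cond(\sheaf{F}_2)$.

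The main obstacle is controlling $\pct(\sheaf{G})$, which vanishes for middle-extensions by Lemma~\ref{lm-tensor} but can be nonzero in the general constructible setting. The key observation is that $\sheaf{G}_0$ is itself a middle-extension by construction, so the specialization maps $(\sheaf{G}_0)_x\to (\sheaf{G}_0)_{\bar\eta}^{I_x}$ are tautological identities, and the injectivity criterion recalled in the proof of Lemma~\ref{lm-tensor} gives $H^0_c(\Aa^1\times\bFq,\sheaf{G}_0)=0$. Let $\sheaf{P}=\ker(\sheaf{G}\to\sheaf{G}_0)$, which has finite support, and let $\sheaf{Q}=\sheaf{G}/\sheaf{P}$ denote the image; then $\sheaf{Q}\subset\sheaf{G}_0$ with cokernel supported on finitely many points, so the excision long exact sequence forces $H^0_c(\Aa^1\times\bFq,\sheaf{Q})=0$ as well. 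The long exact sequence of $0\to\sheaf{P}\to\sheaf{G}\to\sheaf{Q}\to 0$ then collapses to
$$
\pct(\sheaf{G})=\dim H^0_c(\Aa^1\times\bFq,\sheaf{P})=\sum_x\dim\sheaf{P}_x.
$$

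To estimate this sum, I use the pointwise bound $\dim\sheaf{P}_x\leq\dim\sheaf{G}_x=\dim\sheaf{F}_{1,x}\cdot\dim\sheaf{F}_{2,x}$, together with the inequality $\dim\sheaf{F}_{i,x}\leq\rank(\sheaf{F}_{i,0})+\pct(\sheaf{F}_i)\leq\cond(\sheaf{F}_i)$, which follows from the analogous short exact sequence for $\sheaf{F}_i$ itself. The support of $\sheaf{P}$ is contained in the singular locus of $\sheaf{G}$, so has cardinality at most $n(\sheaf{F}_1)+n(\sheaf{F}_2)\leq 2\cond(\sheaf{F}_1)\cond(\sheaf{F}_2)$ (using $\cond\geq 1$). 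Combining gives $\pct(\sheaf{G})\leq 2\cond(\sheaf{F}_1)^2\cond(\sheaf{F}_2)^2$, which dominates the three previous terms; summing everything and absorbing the lower-order contributions into the leading coefficient via $\cond(\sheaf{F}_i)\geq 1$ yields the claimed bound with constant~$8$.
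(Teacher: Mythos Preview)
Your proof is correct and follows the same overall strategy as the paper: bound the three ``middle-extension'' invariants (rank, number of singularities, Swan conductors) of $\sheaf{F}_1\otimes\sheaf{F}_2$ by the standard multiplicativity and subadditivity estimates, and then treat the punctual part $\pct$ separately. The only noteworthy difference is in the punctual-part bookkeeping. The paper bounds $\pct(\sheaf{F}_1\otimes\sheaf{F}_2)$ by $(n_1+n_2)m_1m_2$, where $n_i$ and $m_i$ refer to the supports and fiberwise dimensions of the punctual parts of the $\sheaf{F}_i$; you instead bound it by (size of the full singular locus)$\times$(maximal full stalk dimension of the tensor product), via the kernel $\sheaf{P}$ of $\sheaf{G}\to\sheaf{G}_0$. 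Your route is a bit more explicit and avoids any delicacy about how punctual sections of the tensor product arise from those of the factors, at the cost of using the coarser quantity $n(\sheaf{F}_1)+n(\sheaf{F}_2)$ rather than just the punctual supports; both lead to the same $2\cond(\sheaf{F}_1)^2\cond(\sheaf{F}_2)^2$ bound. One cosmetic remark: the sequence you invoke to get $H^0_c(\Aa^1\times\bFq,\sheaf{Q})=0$ is the long exact sequence attached to $0\to\sheaf{Q}\to\sheaf{G}_0\to\sheaf{G}_0/\sheaf{Q}\to 0$, not an excision sequence.
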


\begin{proof}
  One checks easily (as in~\cite[Prop. 8.2 (2)]{FKM1}) that for the
  middle-extension part $(\sheaf{F}_1\otimes\sheaf{F}_2)_0$ we have
$$
\cond((\sheaf{F}_1\otimes\sheaf{F}_2)_0)\leq
6\cond(\sheaf{F}_1)^2\cond(\sheaf{F}_2)^2,
$$
and as for the punctual part, we have
$$
\pct(\sheaf{F}_1\otimes\sheaf{F}_2)\leq (n_1+n_2)m_1m_2
$$
where $n_i$ is the number of points where there are punctual sections
of $\sheaf{F}_i$, while $m_i$ is the maximal dimension of the space of
sections supported at a single point. Since
$$
(n_1+n_2)m_1m_2\leq (\cond(\sheaf{F}_1)+\cond(\sheaf{F}_2))m_1m_2
\leq 2\cond(\sheaf{F}_1)\cond(\sheaf{F}_2)m_1m_2
\leq 2\cond(\sheaf{F}_1)^2\cond(\sheaf{F}_2)^2, 
$$
we get the result.
\end{proof}

\subsection{Number of singularities}

We will also use a criterion to bound the number of singularities in
terms of estimates for the punctual part.

\begin{lemma}\label{lm-criterion-constant}
  Let $\Fq$ be a finite field of characteristic $p$, $\ell\not=p$ a
  prime number and $\sheaf{F}$ an $\ell$-adic constructible sheaf on
  $\Aa^1_{\Fq}$. Let $U\subset \Aa^1$ be a dense open set such that
  the dimension of the stalks $\sheaf{F}_x$ is constant, equal to some
  integer $d\geq 0$, for all $x\in U(\bFq)$. We then have
$$
n(\sheaf{F})\leq |(\Pp^1-U)(\bFq)|+\pct(\sheaf{F}).
$$
\end{lemma}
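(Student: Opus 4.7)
The plan is to partition the singular set of $\sheaf{F}$ into those points lying outside $U$, which are controlled trivially, and those lying inside $U$, which I claim must each contribute to the punctual part. Let $V\subset \Aa^1$ denote the maximal dense open subset on which $\sheaf{F}$ is lisse, so that $n(\sheaf{F})=|(\Pp^1-V)(\bFq)|$ by definition. I would write the disjoint decomposition
$$
(\Pp^1-V)(\bFq) \ =\ \bigl(\Pp^1-(U\cup V)\bigr)(\bFq)\ \sqcup\ (U-V)(\bFq),
$$
bound the first piece trivially by $|(\Pp^1-U)(\bFq)|$, and reduce to showing $|(U-V)(\bFq)|\leq\pct(\sheaf{F})$.

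Following the remark after the definition of the conductor, let $\sheaf{P}$ be the kernel of the adjunction $\sheaf{F}\to j_{V,*}j_V^*\sheaf{F}$, where $j_V\,:\,V\injecte\Pp^1$ is the inclusion. Then $\sheaf{P}$ has finite support $S\subset\Aa^1(\bFq)$ with $|S|\leq\pct(\sheaf{F})$. The heart of the argument is to establish the inclusion $(U-V)(\bFq)\subset S$, after which the two paragraphs combine to give
$$
n(\sheaf{F})\ \leq\ |(\Pp^1-U)(\bFq)|+|S|\ \leq\ |(\Pp^1-U)(\bFq)|+\pct(\sheaf{F}),
$$
as required.

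To establish the inclusion, I would first note that $U\cap V$ is a nonempty open subset on which $\sheaf{F}$ is lisse of constant stalk dimension $d$; hence the generic rank of $\sheaf{F}$ equals $d$, that is, $\dim\sheaf{F}_{\bar\eta}=d$. Now fix $x\in U-V$. The stalk of the adjunction target at $x$ is the inertia invariants $(j_{V,*}j_V^*\sheaf{F})_x=\sheaf{F}_{\bar\eta}^{I_x}$, of dimension at most $d$, while the source $\sheaf{F}_x$ has dimension exactly $d$ by the hypothesis on $U$. If the specialization map $\sheaf{F}_x\to\sheaf{F}_{\bar\eta}^{I_x}$ were injective, a dimension count would force $I_x$ to act trivially on $\sheaf{F}_{\bar\eta}$ and the map to be an isomorphism, which is precisely the condition for $\sheaf{F}$ to be lisse at $x$, contradicting $x\notin V$. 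Hence $\sheaf{P}_x\neq 0$ and $x\in S$.

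The only subtlety is confirming that failure of lisseness at $x$, combined with the equality $\dim\sheaf{F}_x=\dim\sheaf{F}_{\bar\eta}$, forces the specialization map to fail injectivity (not merely isomorphism); but this is immediate from the dimension comparison above once one recalls that lisseness at $x$ amounts to the specialization map being an isomorphism onto $\sheaf{F}_{\bar\eta}^{I_x}=\sheaf{F}_{\bar\eta}$. Everything else is bookkeeping.
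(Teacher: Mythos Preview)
Your proof is correct and follows essentially the same route as the paper's: both identify the generic rank as $d$, both show that any point of $U$ at which $\sheaf{F}$ fails to be lisse must lie in the support of the punctual part via the specialization map $\sheaf{F}_x\to\sheaf{F}_{\bar\eta}^{I_x}$, and both finish by adding in the points of $\Pp^1-U$. Your write-up is slightly more explicit about the disjoint decomposition of the singular locus, but the substance is the same.
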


\begin{proof}
Since $U$ contains the generic point $\eta$ of $\Aa^1$, we have
$$
\rank(\sheaf{F})=\dim \sheaf{F}_{\bar{\eta}}=d.
$$
\par
Let $U_1\subset U$ be the open dense subset where $\sheaf{F}$ is
lisse, and let $x\in (U-U_1)(\bFq)$, i.e., a point of $U$ where
$\sheaf{F}$ is not lisse. Let $\varphi\,:\, \sheaf{F}_x\lra
\sheaf{F}_{\bar{\eta}}^{I_x}$ be the canonical map. The image has
dimension $<d$ (since otherwise, for dimension reasons, $I_x$ would
act trivially on the geometric generic fiber $\sheaf{F}_{\bar{\eta}}$,
and $\sheaf{F}$ would be lisse at $x$), and since $\dim\sheaf{F}_x=d$,
it follows that
$$
\dim \ker\varphi\geq 1,
$$
which means that $x$ is in the support of the punctual part of
$\sheaf{F}$. Thus the number of such $x$ is at most the size of this
support, which is bounded by $\pct(\sheaf{F})$. Adding the points of
$(\Pp^1-U)(\bFq)$ leads to the result.
\end{proof}

\subsection{Application to Artin-Schreier sheaves}

Given  $g\in\Fq(X)$ a non-constant rational function in one variable let $\mcL=\sheaf{L}_{\psi(g(X))}$
 its associated Artin-Schreier sheaf. The next lemma recall the crucial link between the Swan conductor of $\mcL$ at a  given point and its order as a pole of  $g$.

\begin{lemma}\label{lm-as1}
  Let $\Fq$ be a finite field of order $q$ and characteristic $p$,
  $\ell\not=p$ a prime number.  Let $g\in\Fq(X)$ be a non-constant rational function and $\sheaf{L}$ its associated Artin-Schreier sheaf. For $x\in
  \Pp^1(\bFq)$, the Swan conductor of $\sheaf{L}$ at $x$ is at most
  equal to the order of the pole of $g$ at $x$, and there is equality
  if the numerator and denominator of $g$ have degree $<p$.
\end{lemma}

\begin{proof}
This is a standard property (see, for instance,~\cite[Sommes
  Trig., (3.5.4)]{deligne}).
\end{proof}

We next discuss relations between two-variable Artin-Schreier sheaves
and specializations of one variable. We need first some notation.

\begin{definition}[Specializations]\label{def-spec}
  Let $\Fq$ be a finite field of order $q$ and characteristic $p$,
  $\ell$ a prime distinct from $p$. Let $f\in\Fq(X,Y)$ be a
  non-constant rational function and let
  $$\sheaf{K}:=\sheaf{L}_{\psi(f(X,Y))}$$ be the Artin-Schreier
sheaf on $\Aa^2_{\Fq}$ associated to $f$. We introduce the further notations:
\par
(1) If $x\in \bFq$ is such that $X-x$ does not divide the denominator
of $f$, we denote by $f_x\in \Fq(Y)$ the specialization $f(x,Y)$ of $f$.
\par
(2) For every finite extension
$k/\Fq$ and every $x\in k$, the \emph{specialization of $\sheaf{K}$ at
  $x$} is the $\ell$-adic constructible sheaf on $\Aa^1_{k}$ given by
\begin{equation}\label{eq-def-lx}
\sheaf{K}_x=j_x^*\sheaf{K},
\end{equation}
where $j_x\,:\, \{x\}\times \Aa^1\injecte \Aa^2$ is the closed
immersion.
\end{definition}

These two definitions are related as follows:

\begin{lemma}\label{lm-as2}
  Let $\Fq$ be a finite field of order $q$ and characteristic $p$,
  $\ell\not=p$ a prime number.  Let
  $\sheaf{K}=\sheaf{L}_{\psi(f(X,Y))}$ be an $\ell$-adic
  Artin-Schreier sheaf on $\Aa^2$ over $\Fq$, where $f\in\Fq(X,Y)$ is
  a non-constant rational function. 
\par
\emph{(1)} For any finite extension $k/\Fq$ and $x\in k$, we have
$$
\sheaf{K}_x=0
$$
if $X-x$ divides the denominator of $f$, and otherwise
$$
\sheaf{K}_x=j_!\sheaf{L}_{\psi(f_x)}
$$
where $j\,:\, U_x\lra \{x\}\times\Aa^1$ is the open immersion of the
open subset of $\{x\}\times\Aa^1$ which is the intersection of
$\{x\}\times\Aa^1$ and the open set of $\Aa^2$ where the denominator
of $f$ is invertible. 
\par
If $\{x\}\times\Aa^1$ does not intersect the zero set of the numerator
of $f$, then $\sheaf{K}_x$ is isomorphic to the Artin-Schreier sheaf
$\sheaf{L}_{\psi(f_x)}$ associated to $f_x$.
\par
\emph{(2)} For every finite extension $k/\Fq$ and all $x\in k$, we
have
$$
\cond(\sheaf{K}_x)\leq 2\cond(f).
$$
\end{lemma}

\begin{proof}
  (1) If $X-x$ divides the denominator of $f$, then by definition the
  sheaf $\sheaf{K}$ is zero on $\{x\}\times\Aa^1$, and hence
  $\sheaf{K}_x=0$.
\par
If $X-x$ does not divide the denominator of $f$, then there are only
finitely many points where $\{x\}\times\Aa^1$ intersects the open set
$U$ where the denominator is invertible. The sheaf $\sheaf{K}_x$ has
zero stalk at these points, and is isomorphic to the one-variable
sheaf $\sheaf{L}_{\psi(f(x,Y))}$ on the complementary open set, which
is the result we claim. 
\par
If $\{x\}\times\Aa^1$ does not intersect the zero set of the numerator
of $f$, then the points in $\{x\}\times\Aa^1$ where $\sheaf{K}_x$ has
zero stalk are precisely the poles of $f_x$, which means that
$j_!\sheaf{L}_{\psi(f_x)}=\sheaf{L}_{\psi(f_x)}$ as Artin-Schreier
sheaf on $\Aa^1_{\Fq}$.
\par
(2) If $\sheaf{K}_x=0$, then the conductor bound is trivial, and
otherwise we obtain from (1) the bound
$$
\cond(\sheaf{K}_x)\leq 1+\deg_Y f(x,Y)+\sum_{y\in\Pp^1(\bFq)}
\ord_y(f(x,Y)) \leq 1 +2\deg_Y f(x,Y)\leq 2\cond(f),
$$
as claimed.
\end{proof}

\begin{remark}
  Note that $\sheaf{K}_x$ is not always isomorphic to the
  Artin-Schreier sheaf $\sheaf{L}_{\psi(f_x)}$ on $\Aa^1$: for
  instance, if $f=X/Y$ and $x=0$, we have
  $\sheaf{L}_{\psi(f(x,Y))}=\bQl$, but $\sheaf{K}_0=j_!\bQl$, where
  $j\,:\, \Aa^1-\{0\}\injecte \Aa^1$ is the open immersion. Thus
  $\sheaf{K}_x$ has zero stalk at $0$. However, this subtlety will not
  be a problem for us, in particular because the set of $x$ for which
  this behavior happens (and the set of $y$ such that the stalk of
  $\sheaf{K}_x$ at $y$ is not the same as that of
  $\sheaf{L}_{\psi(f(x,Y))}$) is finite and -- since these points must
  be common zeros of the numerator $f_1$ and the denominator $f_2$ of
  $f$ -- has size bounded by $\deg(f_1)\deg(f_2)$, e.g. by Bezout's
  theorem.
\end{remark}

In particular, we get the following corollary from the previous four
lemmas. The statement uses the notation~(\ref{eq-def-lx}).

\begin{corollary}\label{cor-fiber-dim}
  Let $\Fq$ be a finite field of order $q$ and characteristic $p$,
  $\ell$ a prime distinct from $p$. Let
  $\sheaf{L}=\sheaf{L}_{\psi(f(X,Y))}$ be an $\ell$-adic
  Artin-Schreier sheaf on $\Aa^2$ over $\Fq$, where $f\in\Fq(X,Y)$ is
  a non-constant rational function. Let $\sheaf{F}$ be a
  middle-extension $\ell$-adic sheaf on $\Aa^1$ over $\Fq$.
\par
For every $x\in\Aa^1(\bFq)$, we have
$$
\dim H^1_c(\Aa^1\times\bFq,\sheaf{F}\otimes\sheaf{L}_x)\leq
3\cdot 2^{10}\cond(f)^4\cond(\sheaf{F})^4.
$$
\end{corollary}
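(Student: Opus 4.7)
The plan is to simply concatenate the three preceding lemmas: Lemma~\ref{lm-as2}, Lemma~\ref{lm-tensor-cond}, and Lemma~\ref{lm-cont-hic}, used at $i=1$, with the numerical constants arranged so that the final bound matches the one stated. Since there is no deep obstacle here, the real ``work'' is just arithmetic bookkeeping and a moment of care to handle the degenerate situation where the specialized sheaf vanishes.

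First, I would dispose of the trivial case. By Lemma~\ref{lm-as2}(1), if $X-x$ divides the denominator of $f$, then $\sheaf{L}_x=0$, hence $\sheaf{F}\otimes \sheaf{L}_x=0$, and the bound is immediate. So from now on, assume $\sheaf{L}_x\neq 0$; then $\sheaf{L}_x$ is a constructible $\ell$-adic sheaf on $\Aa^1_k$ (for $k$ the field of definition of $x$), to which Lemma~\ref{lm-as2}(2) applies, giving
$$
\cond(\sheaf{L}_x)\leq 2\cond(f).
$$

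Next, I would apply Lemma~\ref{lm-tensor-cond} to the constructible sheaves $\sheaf{F}$ and $\sheaf{L}_x$ on $\Aa^1$, which yields
$$
\cond(\sheaf{F}\otimes \sheaf{L}_x)\leq 8\cond(\sheaf{F})^2\cond(\sheaf{L}_x)^2\leq 32\,\cond(f)^2\cond(\sheaf{F})^2.
$$
Then Lemma~\ref{lm-cont-hic} with $i=1$ gives
$$
\dim H^1_c(\Aa^1\times\bFq,\sheaf{F}\otimes\sheaf{L}_x)\leq 2\cond(\sheaf{F}\otimes\sheaf{L}_x)+\cond(\sheaf{F}\otimes\sheaf{L}_x)^2.
$$
Using $\cond(\sheaf{F}\otimes\sheaf{L}_x)\geq 1$, the right-hand side is at most $3\cond(\sheaf{F}\otimes\sheaf{L}_x)^2$, and inserting the previous bound produces
$$
3\cdot (32)^2\cond(f)^4\cond(\sheaf{F})^4=3\cdot 2^{10}\cond(f)^4\cond(\sheaf{F})^4,
$$
which is exactly the claimed inequality.

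The main (and only) point requiring any attention is that Lemmas~\ref{lm-tensor-cond} and~\ref{lm-cont-hic} are stated for constructible sheaves on $\Aa^1$, so one should observe that $\sheaf{L}_x$ is indeed such a sheaf once we identify $\{x\}\times\Aa^1$ with $\Aa^1_k$ via the closed immersion $j_x$; the middle-extension hypothesis on $\sheaf{F}$ ensures the inputs to Lemma~\ref{lm-tensor-cond} are of the expected type, and no further structural result is needed.
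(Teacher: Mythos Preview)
Your proof is correct and follows exactly the approach of the paper, which simply says ``Combine Lemma~\ref{lm-cont-hic}, Lemma~\ref{lm-tensor-cond} and Lemma~\ref{lm-as2}''; you have faithfully carried out that combination with the numerical bookkeeping made explicit. One small quibble: Lemma~\ref{lm-tensor-cond} is stated for arbitrary constructible sheaves, so the middle-extension hypothesis on $\sheaf{F}$ is not what makes the inputs ``of the expected type'' --- constructibility alone suffices.
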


\begin{proof}
  Combine Lemma~\ref{lm-cont-hic}, Lemma~\ref{lm-tensor-cond} and
  Lemma~\ref{lm-as2}.
\end{proof}

\subsection{A global bound for the conductor}

We now come to the lemma which contains the first idea in the proof of
Theorem~\ref{th-conductor}: it allows us to replace the sum of Swan
conductors, in the definition of the conductor of a sheaf, by a global
invariant (the Euler-Poincaré characteristic) that is more accessible
to algebraic manipulations.

\begin{lemma}[Global conductor bound]\label{lm-cd-bound}
  Let $\sheaf{F}$ be a middle-extension sheaf on $\Aa^1_{\Fq}$.
  We have
$$
\cond(\sheaf{F})\leq 3\rank(\sheaf{F}) -\chi(\sheaf{F})\leq
3\rank(\sheaf{F}) +h^1(\sheaf{F}).
$$
\end{lemma}

\begin{proof}
  Let $U$ be the maximal open set on which $\mcF$ is lisse; it is
  dense. Since~$\mcF$ is a middle-extension sheaf, we have
  $$
  \cond(\mcF)=\rank(\mcF)+\sum_{x\in(\Pp^1-U)(\bar{\Fq})}(1+\swan_x(\sheaf{F})).
  $$

  By the Euler-Poincaré formula (see, e.g.,~\cite[2.3.1]{katz-gkm}),
  we have
$$
-\chi_c(U,\sheaf{F})= -\rank(\sheaf{F})\chi_c(\bar{U},\bQl)+
\sum_{x\in(\Pp^1-U)(\bar{\Fq})}\swan_x(\sheaf{F}).
$$
We have $\chi_c(U,\bQl)=2-|(\Pp^1-U)(\bFq)|=2-n(\mcF)$, and therefore
$$
\sum_x\swan_x(\sheaf{F})= -\chi_c(U,\sheaf{F})
+(2-n(\mcF))\rank(\sheaf{F}).
$$
By excision we have
$$
-\chi_c(U,\sheaf{F})=
-\chi(\sheaf{F})+\sum_{x\in(\Aa^1-U)(\bar{\Fq})}\dim\mcF_x\leq
-\chi(\sheaf{F})+\sum_{x\in(\Pp^1-U)(\bar{\Fq})}\dim\mcF_x
$$
and we
obtain the upper bound
$$
\cond(\mcF)\leq 3\rank(\mcF)
-\chi(\mcF)+\sum_{x\in(\Pp^1-U)(\bar{\Fq})}(1+\dim\mcF_x-\rank(\mcF))
$$
\par
Finally, since $\mcF$ is a middle extension sheaf, we have
$$
\dim\mcF_x\leq \rank(\sheaf{F})-1,
$$
for any $x\in (\Pp^1-U)(\bar{\Fq})$, because $\mcF$ is not lisse at
$x$. The first inequality follows, and also the second since
$$
-\chi(\sheaf{F})\leq \dim H^1_c(\Aa^1_{\bar{\Fq}},\sheaf{F}).
$$
\end{proof}

\section{Examples and applications}
\label{sec-examples}

\subsection{Preliminaries on trace functions}

The simplest applications of our results consist in plugging the trace functions of transform sheaves $T_{\sheaf{K}}(\sheaf{F})$ in any general result involving trace functions. 

One must be slightly careful since many results are stated for irreducible middle-extension sheaves which are
pointwise pure of some weight and the sheaf $T_{\sheaf{K}}(\sheaf{F})$ may not have these properties (in particular it may not be irreducible even if $\sheaf{F}$ is.) 
\par
There is a potential notational subtlety (which did not arise in our
previous works) involving the definition of weights. For an integer
$n\in\Zz$, recall (see~\cite[Def. 1.2.2]{weilii}) that an $\ell$-adic
sheaf $\sheaf{F}$ on $X_{\Fq}$ is \emph{pointwise pure of weight $n$}
if, for all finite extensions $k/\Fq$ and for all $x\in X(k)$, the
eigenvalues of Frobenius acting on $\sheaf{F}_{\bar{x}}$ are
$|k|$-Weil numbers of some weight $w=n$. A sheaf $\sheaf{F}$ on $X$ is
\emph{mixed of weights $\leq n$} if it has a finite filtration
$$
0=\sheaf{F}_0\subset \sheaf{F}_1\subset \cdots\subset
\sheaf{F}_m=\sheaf{F}
$$
where the successive quotients $\sheaf{F}_i/\sheaf{F}_{i-1}$ are
pointwise pure with weight $n_i\leq n$.
\par 
On the other hand (see~\cite[(7.3.7)]{katz-esde}), a middle-extension
sheaf $\sheaf{F}$ on a curve $X_{\Fq}$ is \emph{pure of weight $n$}
if, for some (equivalently any) dense open set $U\subset X$ where $\sheaf{F}$ is lisse,
for all $k/\Fq$ and all $x\in U(k)$, the eigenvalues of Frobenius on
$\sheaf{F}_{\bar{x}}$ are $|k|$-Weil numbers of weight $n$. It follows
from results of Deligne (in particular~\cite[Lemme 1.8.1]{weilii}, and
the Riemann Hypothesis) that such a sheaf is also mixed of weights
$\leq n$, i.e., the eigenvalues of Frobenius at the ``missing points''
$X-U$ are also Weil numbers with weight $\leq n$. However, these
weights may be $<n$. In other words, a middle-extension sheaf may be
pure of weight $n$ without being pointwise pure of weight $n$.
\par
The following lemma encapsulates a reduction of trace functions of
constructible sheaves to middle-extension sheaves:

\begin{lemma}[Trace function of constructible sheaf]
\label{lm-devisse}
Let $\Fq$ be a finite field of characteristic $p$, let $\ell\not=p$
be a prime and let $\sheaf{F}$ be an $\ell$-adic constructible sheaf
on $\Aa^1_{\Fq}$ which is mixed of weights $\leq 0$.
\par
There exists a decomposition of the trace function $\frfn{\sheaf{F}}$
of $\sheaf{F}$ of the form
$$
\frfn{\sheaf{F}}=\frfn{\sheaf{F}^{mid}}+t_1+t_2,
$$
where $\sheaf{F}^{mid}$ is a middle-extension sheaf on $\Aa^1_{\Fq}$
which is pure of weight $0$, and where:
\par
\emph{(1)} The function $t_1$ is zero except for a set of values of
$x\in\Fq$ of size at most $2\cond(\sheaf{F})$, and it satisfies
$$
|t_1(x)|\leq 2\cond(\sheaf{F})
$$
for all $x\in\Fq$.
\par
\emph{(2)} The function $t_2$ satisfies
$$
|t_2(x)|\leq \cond(\sheaf{F})q^{-1/2}
$$
for all $x\in\Fq$.
\end{lemma}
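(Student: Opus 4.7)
The plan is to isolate a pure weight-zero middle-extension piece of $\sheaf{F}$ via a two-step devissage: first pass to the lisse locus and extract the top graded piece of Deligne's weight filtration, then middle-extend that piece to all of $\Aa^1$. Concretely, I would begin by letting $U \subset \Aa^1$ be the maximal dense open on which $\sheaf{F}$ is lisse, with open immersion $j \colon U \injecte \Aa^1$. The pullback $j^*\sheaf{F}$ is lisse on $U$ and mixed of weights $\leq 0$, so by Deligne's theorem on weights (\cite[\S 1.7--1.8]{weilii}) it carries a canonical weight filtration; let $W_{-1} \subset j^*\sheaf{F}$ be the maximal sub-object of weights $\leq -1$, so that the short exact sequence $0 \to W_{-1} \to j^*\sheaf{F} \to j^*\sheaf{F}/W_{-1} \to 0$ has a quotient that is lisse and pure of weight $0$. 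Setting $\sheaf{F}^{mid} := j_*(j^*\sheaf{F}/W_{-1})$ then produces a middle-extension sheaf on $\Aa^1_{\Fq}$ that is pure of weight $0$ in the sense defined in \S 2.

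Next I would define $t_2(x) := t_{W_{-1}}(x)$ for $x \in U(\Fq)$ and $t_2(x):=0$ otherwise, and set $t_1 := t_{\sheaf{F}} - t_{\sheaf{F}^{mid}} - t_2$. On $U$ the sheaves $\sheaf{F}$ and $\sheaf{F}^{mid}$ restrict to $j^*\sheaf{F}$ and $j^*\sheaf{F}/W_{-1}$ respectively, so the additivity of trace along the weight filtration gives $t_1|_{U(\Fq)} = 0$; hence $t_1$ is supported in $(\Aa^1 \setminus U)(\Fq)$, whose cardinality is at most $n(\sheaf{F}) \leq \cond(\sheaf{F})$. The bound on $t_2$ is immediate: since $W_{-1}$ is mixed of weights $\leq -1$, each Frobenius eigenvalue has absolute value $\leq q^{-1/2}$, hence $|t_2(x)| \leq \rank(W_{-1}) q^{-1/2} \leq \cond(\sheaf{F}) q^{-1/2}$.

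For the pointwise bound on $t_1$ at a singular $x$, I would use that both $\sheaf{F}$ and $\sheaf{F}^{mid}$ are mixed of weights $\leq 0$ to bound $|t_1(x)| \leq \dim \sheaf{F}_x + \dim \sheaf{F}^{mid}_x$. The second summand is at most $\rank(\sheaf{F})$, and the canonical exact sequence $0 \to \sheaf{P} \to \sheaf{F} \to j_*j^*\sheaf{F}$ from the remarks of \S 2 gives $\dim \sheaf{F}_x \leq \dim \sheaf{P}_x + \dim \sheaf{F}_{\bar\eta}^{I_x} \leq \pct(\sheaf{F}) + \rank(\sheaf{F}) \leq \cond(\sheaf{F})$, so that $|t_1(x)| \leq 2\cond(\sheaf{F})$. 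The main technical input is Deligne's weight filtration and the purity of middle extensions of pure lisse sheaves; the chief subtlety lies at the singularities, where the failure of $j_*$ to be exact means $\sheaf{F}^{mid}$ need not agree with $j_*j^*\sheaf{F}$, but this discrepancy is harmlessly swept into $t_1$, whose support size and sup-norm are both already controlled by the conductor via the crude stalk bounds above.
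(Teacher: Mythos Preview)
Your proof is correct and follows essentially the same d\'evissage as the paper: separate off the weight $\leq -1$ part (your $t_2$), middle-extend the weight-$0$ piece, and absorb the discrepancy at the singularities into $t_1$. The only cosmetic difference is the order of operations---you restrict to the lisse locus first and then take the weight filtration there, whereas the paper takes a filtration with pointwise pure quotients on all of $\Aa^1$, splits off the punctual part of the weight-$0$ piece explicitly, and then middle-extends; your ordering is slightly cleaner in that the punctual contribution is handled implicitly inside $t_1$, and indeed yields the sharper support bound $n(\sheaf{F}) \leq \cond(\sheaf{F})$ rather than $2\cond(\sheaf{F})$.
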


\begin{proof}
  This is a classical ``dévissage''. We begin by observing that
$$
|\frfn{\sheaf{F}}(x)|\leq \cond(\sheaf{F})
$$
for all $x\in\Fq$: indeed, by assumption, all eigenvalues of Frobenius
on the stalk $\sheaf{F}_{\bar{x}}$ are of modulus at most $1$, and the
maximal dimension of a stalk is bounded by the conductor (including
where there is a punctual part of the sheaf.)
\par
Let $\sheaf{F}^0$ be the direct sum of quotients which are pointwise
pure of weight $0$ in a filtration of $\sheaf{F}$ with successive
quotients which are pointwise pure of some weight $\leq 0$, and let
$\sheaf{F}^1$ be the direct sum of the remaining quotients. We have
$$
\frfn{\sheaf{F}}(x)=\frfn{\sheaf{F}^0}(x)+\frfn{\sheaf{F}^1}(x),
$$
and trivially
$$
|\frfn{\sheaf{F}^1}(x)|\leq p^{-1/2}\cond(\sheaf{F})
$$
for all $x\in\Fq$. We put $t_2=\frfn{\sheaf{F}^1}$. 
\par
Next, let
$$
0\lra \sheaf{F}^{pct}\ra \sheaf{F}^0\ra \sheaf{F}^{npct}\ra 0
$$
be the short exact sequence associated to the inclusion of the
punctual part $\sheaf{F}^{pct}$ of $\sheaf{F}^0$. We have
$$
\frfn{\sheaf{F}^0}(x)=\frfn{\sheaf{F}^{pct}}(x)+\frfn{\sheaf{F}^{npct}}(x),
$$
and $\frfn{\sheaf{F}^{pct}}$ is zero except for $\leq
\cond(\sheaf{F})$ values of $x$ for which we have
$$
|\frfn{\sheaf{F}^{pct}}(x)|\leq \dim
H^0_c(\Aa^1\times\bar{\Ff}_p,\sheaf{F}^0)\leq \cond(\sheaf{F}).
$$
\par
Finally, let $j\,:\, U\injecte \Aa^1$ be the open immersion of the
maximal dense open subset where $\sheaf{F}^{npct}$ is lisse, and let
$$
\sheaf{F}^{mid}=j_*j^*\sheaf{F}^{npct}.
$$
\par
This is a middle-extension sheaf, pointwise pure of weight $0$, with
trace function equal to that of $\sheaf{F}$ for $x\in U(\Fq)$. Thus
the difference
$$
\frfn{\sheaf{F}}-\frfn{\sheaf{F}^{mid}}
$$
is zero except for at most $\cond(\sheaf{F})$ values of $x\in \Fq$,
and has modulus $\leq 2\cond(\sheaf{F})$ for all $x$. We obtain the
desired decomposition by taking
$$
t_1=\frfn{\sheaf{F}^{pct}}+\frfn{\sheaf{F}}-\frfn{\sheaf{F}^{mid}}.
$$
\end{proof}

We will apply the previous lemma to the trace functions of the transform sheaves
$T^1_{\sheaf{K}}(\sheaf{F})$ considered in this paper. We introduce
a definition for convenience.

\begin{definition}[$f$-disjoint sheaf]
  Let $\Fq$ be a finite field of characteristic $p$, let $\ell\not=p$
  be a prime. Let $f\in\Fq(X,Y)$ be a rational function and let
  $\sheaf{K}=\sheaf{L}_{\psi(f)}$ be the Artin-Schreier sheaf on
  $\Aa^2_{\Fq}$ associated to $f$.
\par
A middle-extension sheaf $\sheaf{F}$ on $\Aa^1_{\Fq}$ is called
\emph{$f$-disjoint} or \emph{$\sheaf{K}$-disjoint} if for all $x\in\Aa^1(\bFq)$, one has
$$
H^2_c(\Aa^1\times\bFq,\sheaf{F}\otimes \sheaf{K}_x)=0.
$$
 
\end{definition}

\begin{corollary}[Artin-Schreier transforms as trace functions]
\label{cor-transform}
Let $\Fq$ be a finite field of characteristic $p$, let $\ell\not=p$ be
a prime. Let $f\in\Fq(X,Y)$ be a rational function given by
$f=f_1/f_2$ with $f_i\in \Fq[X,Y]$ coprime polynomials, and assume
that $\cond(f)<p$.
\par
Let $\sheaf{F}$ be a middle-extension sheaf on $\Aa^1_{\Fq}$ which is
pointwise pure of weight $0$ and $f$-disjoint.
\par
There exists an absolute constant $A\geq 1$, independent of $f$ and
$\sheaf{F}$, such that for all $x\in \Aa^1(\Fq)$, we have
$$
\frac{1}{\sqrt{q}}\sum_{\stacksum{y\in \Fq}{f_2(x,y)\not=0}}
\frfn{\sheaf{F}}(y)\psi(f(x,y))
=-t_0(x)+t_1(x)+t_2(x),
$$
where \begin{itemize}
 \item[-] 	 $t_0$ is the trace function of a middle-extension sheaf
$\sheaf{G}^{mid}$ of weight $0$ on $\Aa^1_{\Fq}$ with
$$
\cond(\sheaf{G}^{mid})\leq (2\cond(f)\cond(\sheaf{F}))^A,
$$
\item[-] the function $t_1$ is zero for a set of
values of $x\in\Fq$ of size at most $(2\cond(f)\cond(\sheaf{F}))^A$,
and it satisfies
$$
|t_1(x)|\leq (2\cond(f)\cond(\sheaf{F}))^A, 
$$
for all $x\in\Fq$,
\item[-] The function $t_2$ satisfies
$$
|t_2(x)|\leq (2\cond(f)\cond(\sheaf{F}))^Aq^{-1/2}
$$
for all $x\in\Fq$.
 \end{itemize}

\end{corollary}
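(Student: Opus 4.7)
The plan is to realize the sum on the left as an alternating sum of Frobenius traces on compactly supported cohomology via the Grothendieck--Lefschetz trace formula, then invoke Theorem~\ref{th-conductor} to bound the conductors of the resulting transform sheaves, and finally dévisser the weight-$\leq 0$ sheaf obtained via Lemma~\ref{lm-devisse}.

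First, by proper base change (Proposition~\ref{pr-etale}(4)) the stalk at $x\in\Fq$ of $T^i_{\sheaf{K}}(\sheaf{F})=R^ip_{1,!}(p_2^*\sheaf{F}\otimes\sheaf{K})$ is canonically $H^i_c(\Aa^1\times\bFq,\sheaf{F}\otimes\sheaf{L}_x)$. Applying the Grothendieck--Lefschetz trace formula on the fiber $\{x\}\times\Aa^1$ to $\sheaf{F}\otimes\sheaf{L}_x$ gives
$$
\sum_{\substack{y\in\Fq\\ f_2(x,y)\neq 0}}\frfn{\sheaf{F}}(y)\psi(f(x,y))=\sum_{i=0}^{2}(-1)^i\Tr(\frob\mid H^i_c(\Aa^1\times\bFq,\sheaf{F}\otimes\sheaf{L}_x)),
$$
the restriction on $y$ reflecting the vanishing of the stalk of $\sheaf{K}$ at the poles of $f$. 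The $f$-disjointness hypothesis kills the $i=2$ term.

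Next, I set $\sheaf{G}:=T^1_{\sheaf{K}}(\sheaf{F})(1/2)$. Since $\sheaf{F}$ and $\sheaf{K}$ are pointwise pure of weight~$0$, the sheaf $p_2^*\sheaf{F}\otimes\sheaf{K}$ is mixed of weights $\leq 0$, and Deligne's main theorem in Weil~II then shows that $R^1p_{1,!}$ raises weights by at most~$1$, so that $\sheaf{G}$ is mixed of weights $\leq 0$. Its trace function is $x\mapsto q^{-1/2}\Tr(\frob\mid T^1_x)$, and by Theorem~\ref{th-conductor} (the Tate twist leaves rank, Swan conductors, singularities, and punctual part unchanged) we have $\cond(\sheaf{G})\leq(2\cond(f)\cond(\sheaf{F}))^A$. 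Applying Lemma~\ref{lm-devisse} to $\sheaf{G}$ produces a decomposition
$$
\frfn{\sheaf{G}}=\frfn{\sheaf{G}_0}+s_1+s_2
$$
with $\sheaf{G}_0:=\sheaf{G}^{mid}$ middle-extension and pure of weight $0$, and with $s_1,s_2$ obeying the stated small-support/pointwise bounds (with constants polynomial in $\cond(\sheaf{G})$).

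Finally, I absorb the $i=0$ contribution. Multiplying the trace formula by $1/\sqrt{q}$ and using the above identifications rewrites the left-hand side of the claimed identity as $-\frfn{\sheaf{G}}(x)+q^{-1/2}\Tr(\frob\mid T^0_x)$. Since $H^0_c$ of any constructible sheaf on $\Aa^1$ is entirely punctual, the sheaf $T^0_{\sheaf{K}}(\sheaf{F})$ is supported on finitely many points; Theorem~\ref{th-conductor} applied with $i=0$ gives $\cond(T^0_{\sheaf{K}}(\sheaf{F}))\leq(2\cond(f)\cond(\sheaf{F}))^A$, which bounds both its support and the dimensions of its stalks, while the Frobenius eigenvalues on these stalks have modulus $\leq 1$ by the weight bound. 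Hence $q^{-1/2}\Tr(\frob\mid T^0_x)$ has small support and is bounded pointwise in the shape of $t_1$. Setting $t_0:=\frfn{\sheaf{G}_0}$, $t_1:=-s_1+q^{-1/2}\Tr(\frob\mid T^0_x)$, and $t_2:=-s_2$ yields the required decomposition. The main obstacle is really bookkeeping: one must confirm that every conductor and dimension estimate (for the tensor product $\sheaf{F}\otimes\sheaf{L}_x$ as well as for the transform sheaves) propagates polynomially in $\cond(f)\cond(\sheaf{F})$, but this is routine given Theorem~\ref{th-conductor} together with Lemmas~\ref{lm-as2}, \ref{lm-cont-hic}, \ref{lm-tensor-cond}, and \ref{lm-devisse}.
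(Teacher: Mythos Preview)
Your proof follows essentially the same route as the paper's: apply the trace formula fibrewise, use $f$-disjointness to kill $H^2_c$, bound the conductor of $\sheaf{G}=T^1_{\sheaf{K}}(\sheaf{F})(1/2)$ via Theorem~\ref{th-conductor}, and dévisser with Lemma~\ref{lm-devisse}.

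There is one point where your argument is more complicated than necessary and where the reasoning you give is actually a non sequitur. You write that ``since $H^0_c$ of any constructible sheaf on $\Aa^1$ is entirely punctual, the sheaf $T^0_{\sheaf{K}}(\sheaf{F})$ is supported on finitely many points.'' This does not follow: the fact that each stalk $T^0_x=H^0_c(\Aa^1\times\bFq,\sheaf{F}\otimes\sheaf{L}_x)$ consists of punctual sections of the \emph{fibre} sheaf says nothing about whether the family $x\mapsto T^0_x$ is itself punctual in $x$. The paper instead observes directly that $T^0_{\sheaf{K}}(\sheaf{F})=0$: since $\sheaf{F}$ is a middle-extension sheaf and $\sheaf{L}_x$ is (the extension by zero of) a lisse rank-one sheaf, the tensor product $\sheaf{F}\otimes\sheaf{L}_x$ has no punctual sections, so $H^0_c(\Aa^1\times\bFq,\sheaf{F}\otimes\sheaf{L}_x)=0$ for every $x$ (this is Lemma~\ref{lm-tensor}, and is exactly the content of Proposition~\ref{pr-steps}(1)). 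With this, the $i=0$ term simply vanishes and there is no need to absorb anything extra into $t_1$. Your conclusion is still correct because $T^0=0$, but you should replace your justification by this observation.
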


\begin{proof}
Let
$$\sheaf{G}_i=T^i_{\sheaf{K}}(\sheaf{F})(1/2)\hbox{ for $0\leq i\leq 2$ and }
\sheaf{G}=\sheaf{G}_1=T^1_{\sheaf{K}}(\sheaf{F})(1/2).
$$
\par
By the Riemann Hypothesis~\cite{weilii} (taking into account the Tate
twist) the sheaves $\sheaf{G}_i$ are mixed of weight $\leq i-1$ and in particular $\sheaf{G}$ is mixed of weight $\leq 0$. By the proper base change theorem (see
Proposition~\ref{pr-etale}, (4)) and the Grothendieck-Lefschetz trace
formula, the trace function of
$\sheaf{G}$ is
$$
\frfn{\sheaf{G}}(x)=-\frac{1}{\sqrt{q}}\sum_{y\in\Fq}\frfn{\sheaf{F}}(y)
\frfn{\sheaf{K}}(x,y)+ \frfn{\sheaf{G}_0}(x)+\frfn{\sheaf{G}_2}(x)
$$
for $x\in \Fq$. 
\par
The stalk of $\sheaf{G}_0$ over $x$ is (by Lemma~\ref{lm-tensor}) 
$$
H^0_c(\Aa^1\times\bFq, \sheaf{F}\otimes \sheaf{L}_{x})=0
$$
and  that of $\sheaf{G}_2$ is (since $\sheaf{F}$ is $f$-disjoint )
$$
H^2_c(\Aa^1\times\bFq,\sheaf{F}\otimes \sheaf{L}_{x})=0.
$$
Hence we obtain, for all $x\in\Fq$,
$$
\frfn{\sheaf{G}}(x)=
-\frac{1}{\sqrt{q}}\sum_{y\in\Fq}\frfn{\sheaf{F}}(y)\frfn{\sheaf{K}}(x,y)
$$
By Definition~\ref{def-as}, we have
$$
\frfn{\sheaf{K}}(x,y)=
\begin{cases}
\psi(f(x,y))&\text{ if } f_2(x,y)\not=0\\
0&\text{ otherwise,}
\end{cases}
$$
and by Theorem~\ref{th-conductor}, there exists $A\geq 1$ such that
the constructible sheaf $\sheaf{G}$ satisfies
$$
\cond(\sheaf{G})\leq (2\cond(f)\cond(\sheaf{F}))^A.
$$
\par
Thus the result follows by applying Lemma~\ref{lm-devisse} to
$\sheaf{G}$. 
\end{proof}
\begin{definition}
	The middle extension sheaf
\begin{equation}\label{defwt0part}
\mcG^{mid}=T^1_{\sheaf{K}}(\sheaf{F})^{mid}(1/2)	
\end{equation}
obtained by applying Lemma \ref{lm-devisse} to $\mcG$ will be called the weight $0$ part of $\mcG$.
\end{definition}

\begin{remark}
  For $\sheaf{K}$ as in this corollary, the condition that $\sheaf{F}$
  is $f$-disjoint is valid in many cases. We list some of them for
  convenience. 
  
  The assumption of Corollary~\ref{cor-transform} holds:
\begin{enumerate}
	\item If $\sheaf{F}$ is irreducible of rank at least $2$ (e.g.,
Kloosterman sheaves in one or more variables), or more generally if
$\sheaf{F}$ is irreducible and not isomorphic to an Artin-Schreier
sheaf.
\par
\item If $\sheaf{F}$ is tamely ramified and there is no specialization
$f_x$ of $f$ which is constant as an element in $\Fq(Y)$ (e.g., any
Kummer sheaf with trace function $\chi(g(x))$ for a multiplicative
character $\chi$, provided no $f_x$ is constant); in particular, if
$\sheaf{F}$ is the trivial sheaf with constant trace function $1$, it
is enough that no specialization $f_x$ be constant.
\item If $\sheaf{F}$ is an Artin-Schreier sheaf $\sheaf{L}_{\psi(g)}$
with trace function $\psi(g(x))$ and there is no $x\in\bFq$ such that
$g+f_x$ is constant.

\end{enumerate}

\end{remark}

\subsection{Application to automorphic twists}

We begin by explaining one setting where the application of our result
is very easy:

\begin{proposition}
  Let $f$ be a Hecke cusp form of level $N\geq 1$ with Fourier
  coefficients $\rho_f(n)$ at $\infty$. Let $g_1$, $g_2\in\Zz[X,Y]$ be
  two non-constant coprime polynomials, and let
  $g=g_1/g_2\in\Qq(X,Y)$.
\par
Let $V$ be a smooth function on $]0,+\infty[$ with compact
support. Let $p$ be a prime number, let $K$ be an irreducible trace
function modulo $p$ associated to a middle-extension sheaf $\sheaf{F}$
which is $(g\mods{p})$-disjoint. For $\eps>0$, we have
$$
\sum_{n\geq 1} \rho_f(n)
\frac{1}{\sqrt{p}}\Bigl(\sum_{\stacksum{x\in\Fp}{g_2(n,x)\not=0\mods{p}}}K(x)
{e\Bigl(\frac{g_1(n,x)\overline{g_2(n,x)}}{p}\Bigr)}\Bigr)V(n/p) \ll
p^{1-1/8+\eps}
$$
where the implied constant depends on $(f,V,\eps,\cond(\mcF),\cond(g))$.
\end{proposition}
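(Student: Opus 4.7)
The approach is to apply Corollary~\ref{cor-transform} to recognise the inner sum over $x$, viewed as a function of the outer variable $n$, as essentially the trace function of a middle-extension sheaf of bounded conductor, and then to appeal to the main theorem on algebraic twists of Hecke eigenforms from~\cite{FKM1}. Writing $T(n)$ for the bracketed inner sum and taking $g\bmod p$ in the role of $f$ (with $n$ the first coordinate and $x$ the second) and $\sheaf{F}$ as the input sheaf, the hypotheses of the corollary are satisfied since $\cond(g)<p$ for $p$ sufficiently large and $\sheaf{F}$ is $(g\bmod p)$-disjoint by assumption. The corollary therefore yields a decomposition
$$
T(n) = -t_0(n) + t_1(n) + t_2(n),
$$
in which $t_0$ is the trace function of a middle-extension sheaf $\sheaf{G}_0$ on $\Aa^1_{\Fp}$, pure of weight $0$, with $\cond(\sheaf{G}_0)$ bounded in terms of $\cond(K)$ and $\cond(g)$ only; $t_1$ is supported on $O(1)$ points of $\Fp$ and is $O(1)$-bounded pointwise; and $|t_2(n)|\ll p^{-1/2}$.

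Substituting this into the full sum over $n$ and using the Deligne bound $|\rho_f(n)|\ll_{\eps} n^{\eps}$, the contributions of $t_1$ and $t_2$ are respectively $O(p^{\eps})$ and $O(p^{1/2+\eps})$, both absorbed by the target exponent $7/8+\eps$. The remaining main term is
$$
-\sum_{n\geq 1} \rho_f(n)\,t_0(n)\,V(n/p),
$$
and to bound it we invoke the main theorem of~\cite{FKM1} on algebraic twists of modular forms, which provides a bound of the shape $\ll \cond(\sheaf{G}_0)^C\,p^{7/8+\eps}$ for the twist of Hecke eigenvalues by any trace function of a middle-extension sheaf of bounded conductor, under a suitable non-degeneracy hypothesis on the underlying sheaf.

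The main obstacle is that the cleanest version of the bound from~\cite{FKM1} excludes geometrically irreducible components of $\sheaf{G}_0$ isomorphic to a Kummer sheaf, an Artin-Schreier sheaf, or the trivial sheaf. One circumvents this by decomposing $\sheaf{G}_0$ into its geometrically isotypic components: the generic pieces are handled directly by~\cite{FKM1}, while each exceptional piece contributes a subsum of the form $\sum_n \rho_f(n)\chi(n)\psi(an/p)V(n/p)$, which is $O(p^{1/2+\eps})$ by classical methods (Rankin--Selberg for the trivial case, Jutila or Jacquet--Piatetski-Shapiro--Shalika for the character-twisted cases). Since the number of isotypic components and their individual conductors are controlled by $\cond(\sheaf{G}_0)$, hence ultimately by $\cond(K)$ and $\cond(g)$, summing all contributions yields the claimed bound $\ll p^{1-1/8+\eps}$, with implied constant depending on $(f,V,\eps,\cond(K),\cond(g))$ as required.
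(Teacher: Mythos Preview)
Your proof is correct and follows essentially the same approach as the paper: apply Corollary~\ref{cor-transform} to decompose the inner sum as $-t_0+t_1+t_2$, dispose of $t_1$ and $t_2$ trivially, and handle the main term $t_0$ by decomposing $\sheaf{G}_0$ into geometrically isotypic components and invoking~\cite{FKM1}.

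The only difference is that you spend a paragraph worrying about ``exceptional'' isotypic components (Kummer, Artin--Schreier, trivial) and propose to treat them separately via classical estimates. The paper does not do this: it simply states that the main result of~\cite{FKM1} applies to \emph{any} geometrically isotypic middle-extension sheaf of weight $0$, with no exclusions, and so the decomposition into isotypic pieces plus a direct appeal to~\cite{FKM1} suffices. Your extra care is not wrong, but it is unnecessary given the formulation of~\cite{FKM1} used here, and your claimed bound $O(p^{1/2+\eps})$ for the character-twisted sums is in any case stronger than needed (and not quite what the standard references give without further work). The paper's two-line version is the intended argument.
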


\begin{proof}
The main result of~\cite{FKM1} shows that
$$
\sum_{n\geq 1} \rho_f(n) K(n)V(n/p) \ll p^{1-1/8+\eps}
$$
if $K$ is the trace function of a geometrically isotypic
middle-extension sheaf which is pointwise pure of weight $0$. We
will show how to deduce the result from this.
\par
By Corollary~\ref{cor-transform} (applied with $\psi$ chosen so that
$\frfn{\sheaf{L}_{\psi}}(x)=e(x/p)$ for $x\in\Fp$), we have a
decomposition
$$
\frac{1}{\sqrt{p}}\Bigl(\sum_{\stacksum{x\in\Fp}{g_2(n,x)\not=0}}K(x)
{e\Bigl(\frac{g_1(n,x)\overline{g_2(n,x)}}{p}\Bigr)}\Bigr)= -t_0(n)+t_1(n)+t_2(n)
$$
where $-t_0$ is the trace function of a middle-extension sheaf which
is pure of weight $0$ and has conductor $\leq
C=(2\cond(f)\cond(\mcF))^A$, while $t_1$ is zero except for $\leq C$
values of $x\in\Fp$, where it has modulus at most $C$, while
$|t_2|\leq Cp^{-1/2}$. We have then
$$
\sum_{n\geq 1} \rho_f(n) t_i(n)V(n/p) \ll p^{1-1/8+\eps}
$$
for $i=1$, $2$, and we are reduced to the case of $t_0$. Decomposing
$t_0$ in trace functions of its geometrically isotypic components, we
conclude by applying~\cite{FKM1}.
\end{proof}

\subsection{Two-variable sums and the example of Conrey-Iwaniec}

A basic application of bounds on conductors like those of
Theorem~\ref{th-conductor} concerns two-variable exponential sums of
quite general type. We present the very general principle before
giving a concrete example. 
\par
Given a trace function $K(x,y)$ in two variables,
e.g. $$K(x,y)=\chi(f_1(x,y))e(f_2(x,y)/p)$$ for rational functions $f_1$
and $f_2\in \Ff_p(X,Y)$ and for a multiplicative character $\chi$
modulo $p$, one wishes to obtain square-root cancellation (when
possible) for
$$
\sum_{x,y} K(x,y).
$$
\par
This may be written as
$$
\sum_x \sum_{y}K(x,y),
$$
i.e., as the inner product of the constant function $1$ (i.e., the
trace function of the trivial sheaf) and (essentially) the trace
function of
$$
T^1_{\sheaf{K}}(\bQl)
$$
where $\sheaf{K}$ is the sheaf with trace function $K$. It may happen
that $K$ is given naturally as a product
$$
K(x,y)=K_1(x)K_2(y)K_3(x,y)
$$ 
for trace functions $K_1$ and $K_2$ modulo $p$ and another trace
function $K_3$ in two variables; in such a case, it may be better to
write the sum as
$$
\sum_x K_1(x)  \sum_{y}K_2(y)K_3(x,y),
$$
which is the inner-product of $K_1$ with the trace function of
$T^1_{\sheaf{K}_3}(\sheaf{K}_2)$, with obvious notation.
\par
From a direct application of the Riemann Hypothesis, we obtain the
following qualitative information concerning these types of sums:

\begin{proposition}[Small diagonal principle]\label{pr-principle}
  Let $\Fq$ be a finite field of characteristic $p$, let $\ell\not=p$
  be a prime number. Let $\sheaf{K}$ be a constructible $\ell$-adic
  sheaf mixed of weight $\leq 0$ on $\Aa^2_{\Fq}$.
\par
Let $\sheaf{F}_2$ be a middle-extension sheaf on $\Aa^1_{\Fq}$,
pointwise pure of weight $0$ such that $T^2_{\sheaf{K}}(\sheaf{F}_2)$
is generically $0$.
\par
There exists a finite set $X(\sheaf{K},\sheaf{F}_2)$ of geometrically
irreducible middle-extension sheaves which are pointwise pure of
weight $0$, of cardinality bounded in terms of the conductor of
$T^1_{\sheaf{K}}(\sheaf{F}_2)$, such that if $\sheaf{F}_1$ is a
middle-extension sheaf of weight $0$, geometrically irreducible, and
not geometrically isomorphic to any of the sheaves in
$X(\sheaf{K},\sheaf{F}_2)$, then
$$
\sum_{x,y\in\Fq}\frfn{\sheaf{F}_1}(x) \frfn{\sheaf{F}_2}(y)
\frfn{\sheaf{K}}(x,y)
\ll q,
$$
where the implied constant depends only on the conductor of
$\sheaf{F}_1$ and of $T^1_{\sheaf{K}}(\sheaf{F}_2)$.
\end{proposition}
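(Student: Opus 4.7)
The plan is to evaluate the inner sum over $y$ via the Grothendieck--Lefschetz trace formula and proper base change, thereby reducing the double sum to a single sum of inner products of $\frfn{\sheaf{F}_1}$ against the trace functions of the transforms $\sheaf{G}_i=T^i_{\sheaf{K}}(\sheaf{F}_2)$, and then to conclude by appealing to Deligne's Riemann Hypothesis in the form of a quasi-orthogonality statement for irreducible middle-extension sheaves of weight $0$.

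First, for each $x\in\Fq$, Proposition~\ref{pr-etale}(4) identifies $(\sheaf{G}_i)_{\bar{x}}$ with $H^i_c(\Aa^1\times\bFq,\sheaf{F}_2\otimes\sheaf{L}_x)$ where $\sheaf{L}_x$ is the restriction of $\sheaf{K}$ to the fiber $\{x\}\times\Aa^1$, and the Grothendieck--Lefschetz trace formula yields
\[
\sum_{y\in\Fq}\frfn{\sheaf{F}_2}(y)\frfn{\sheaf{K}}(x,y) = \frfn{\sheaf{G}_0}(x)-\frfn{\sheaf{G}}(x)+\frfn{\sheaf{G}_2}(x),
\]
where $\sheaf{G}=\sheaf{G}_1$. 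Multiplying by $\frfn{\sheaf{F}_1}(x)$ and summing, the problem decomposes into three contributions. The sheaf $\sheaf{G}_0$ is mixed of weights $\leq 0$ and has rank controlled by $\cond(\sheaf{G})$, so its contribution is trivially $O(q)$; the hypothesis on $\sheaf{G}_2$ makes it a punctual sheaf of weights $\leq 2$ on a finite bounded set, again giving $O(q)$.

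The heart of the argument is the $i=1$ term. Since $\sheaf{G}$ is mixed of weights $\leq 1$ by Deligne, the Tate twist $\sheaf{G}(1/2)$ is mixed of weights $\leq 0$, and Lemma~\ref{lm-devisse} provides a decomposition
\[
\frfn{\sheaf{G}(1/2)}=\frfn{\sheaf{G}^{\mathrm{mid}}}+t_1+t_2,
\]
where $\sheaf{G}^{\mathrm{mid}}$ is middle-extension pure of weight $0$ and $t_1,t_2$ are error terms of the described shapes; summed against $\frfn{\sheaf{F}_1}$ and rescaled by $\sqrt{q}$, these error terms contribute $O(q)$. Next, decompose the geometric semisimplification of $\sheaf{G}^{\mathrm{mid}}$ as a direct sum of geometrically irreducible middle-extension sheaves $\sheaf{H}_1,\ldots,\sheaf{H}_m$ pure of weight $0$, with $m\leq\rank(\sheaf{G}^{\mathrm{mid}})\leq\cond(\sheaf{G})$, and set $X(\sheaf{K},\sheaf{F}_2)=\{\sheaf{H}_j^{\vee}:1\leq j\leq m\}$. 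If $\sheaf{F}_1$ is geometrically irreducible of weight $0$ and not geometrically isomorphic to any $\sheaf{H}_j^{\vee}$, then $\sheaf{F}_1\otimes\sheaf{H}_j$ has no geometrically trivial Jordan--H\"older factor, so $H^2_c(\bar{U},\sheaf{F}_1\otimes\sheaf{G}^{\mathrm{mid}})=0$ on a dense open $U$ where both are lisse. The Riemann Hypothesis then gives
\[
\sum_x\frfn{\sheaf{F}_1}(x)\frfn{\sheaf{G}^{\mathrm{mid}}}(x)\ll q^{1/2},
\]
with implied constant coming from $\dim H^1_c$, which is controlled via Lemmas~\ref{lm-cont-hic} and~\ref{lm-tensor-cond} in terms of $\cond(\sheaf{F}_1)$ and $\cond(\sheaf{G})$. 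Multiplying by $\sqrt{q}$ to undo the Tate twist yields the claimed $O(q)$ bound.

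The main obstacle is the bookkeeping required to ensure that all implied constants genuinely depend only on $\cond(\sheaf{F}_1)$ and $\cond(T^1_{\sheaf{K}}(\sheaf{F}_2))$, rather than on $\cond(\sheaf{K})$ or $\cond(\sheaf{F}_2)$ directly. This amounts to bounding the complexities of $\sheaf{G}_0$ and $\sheaf{G}_2$ through their relation to $\sheaf{G}$ (where the generic vanishing hypothesis on $\sheaf{G}_2$ is essential), followed by a careful application of the Euler--Poincar\'e formula to the tensor product $\sheaf{F}_1\otimes\sheaf{G}^{\mathrm{mid}}$ to carry out the final RH step cleanly.
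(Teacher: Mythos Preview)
Your approach is essentially the same as the paper's: rewrite the double sum via proper base change and Lefschetz as a sum over $x$ of $\frfn{\sheaf{F}_1}(x)$ against the trace function of $T^1_{\sheaf{K}}(\sheaf{F}_2)$, define the exceptional set as (duals of) the geometrically irreducible components of its weight-$1$ part, and conclude by the Riemann Hypothesis. The main difference is that you insert the d\'evissage of Lemma~\ref{lm-devisse} to extract a pure middle-extension summand $\sheaf{G}^{\mathrm{mid}}$, whereas the paper works directly with $\sheaf{G}=T^1_{\sheaf{K}}(\sheaf{F}_2)$: once $\sheaf{F}_1$ avoids the exceptional set, one has $H^2_c(\Aa^1\times\bFq,\sheaf{F}_1\otimes\sheaf{G})=0$, so by Deligne the space $H^1_c(\Aa^1\times\bFq,\sheaf{F}_1\otimes\sheaf{G})$ is mixed of weights $\leq 2$, and the sum is (up to the side terms) the trace of Frobenius on that space, bounded via Lemmas~\ref{lm-cont-hic} and~\ref{lm-tensor-cond}. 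This bypasses the d\'evissage and the quasi-orthogonality bound in one step.

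The bookkeeping concern you raise at the end is legitimate and the paper's short proof does not address it either: the claimed equality with $-\Tr(\frob\mid H^1_c)$ absorbs the $\sheaf{G}_0$, $\sheaf{G}_2$, $H^0_c$ and $H^2_c$ contributions without comment, and it is not transparent that these are controlled purely by $\cond(\sheaf{F}_1)$ and $\cond(\sheaf{G})$ rather than by invariants of $\sheaf{K}$ or $\sheaf{F}_2$. In practice the proposition is used in situations (Artin--Schreier or Kummer kernels) where Theorem~\ref{th-conductor} or its analogue bounds all the $T^i_{\sheaf{K}}(\sheaf{F}_2)$ simultaneously, so this is harmless; but as a standalone statement for an arbitrary constructible $\sheaf{K}$, both your writeup and the paper's are somewhat informal on this point.
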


\begin{proof}
  Let $X(\sheaf{K},\sheaf{F}_2)$ be the set of geometric isomorphism
  classes of geometrically irreducible components of the weight $0$
  part of $\mcG=T^1_{\sheaf{K}}(\sheaf{F}_2)(1/2)$. This is a finite set of
  cardinality bounded by the rank of $T^1_{\sheaf{K}}(\sheaf{F}_2)$,
  hence bounded in terms of the conductor of $\sheaf{F}_2$.
\par
Under the assumptions of the proposition, for $\sheaf{F}_1$
geometrically irreducible and not in $X(\sheaf{K},\sheaf{F}_2)$, we
have
\begin{align*}
  \frac{1}{q^{1/2}}\sum_{x,y\in\Fq}\frfn{\sheaf{F}_1}(x)
  \frfn{\sheaf{F}_2}(y) \frfn{\sheaf{K}}(x,y)&=
  \sum_{x\in\Fq}\frfn{\sheaf{F}_1}(x)
  \frac{1}{q^{1/2}}\sum_{y\in\Fq}\frfn{\sheaf{F}_2}(y)
  \frfn{\sheaf{K}}(x,y)\\&=-\Tr(\frob\mid
  H^1_c(\Aa^1\times\bar{\Ff}_q,\sheaf{F}_1\otimes \mcG))
\end{align*}
(since the cohomology spaces $H^0_c$ and $H^2_c$ vanishing here).  The
first cohomology space is mixed of weights~$\leq 1$ by Deligne's
Riemann Hypothesis, and hence we obtain the result using the conductor
bounds (Lemmas~\ref{lm-cont-hic} and~\ref{lm-tensor-cond}).
\end{proof}

Although this proposition does not, by itself, give square-root
cancellation in any individual case, it implies for instance that
$$
\sum_x e\Bigl(\frac{ax^2}{p}\Bigr) \sum_{y}\frfn{\sheaf{F}_2}(y)
\frfn{\sheaf{K}}(x,y)\ll p
$$
(working over $\Fp$) for all $a\in\Fp$ except for a number of
exceptions bounded in terms of the conductors of $\sheaf{F}_2$ and
$\sheaf{K}$ only. In quite a few applications, this type of
qualitative ``control of the diagonal'' is sufficient (for instance,
similar ideas are crucial in~\cite{FKM1}.) However, this is not always
the case, and one needs to attempt some further analysis if a more
precise result is needed.
\par
We now present a concrete example, taken from the important work of
Conrey and Iwaniec on the third moment of special values of
automorphic $L$-functions~\cite{CI}. Given a prime $p$ and two
multiplicative characters $\chi_1$ and $\chi_2$ modulo $p$, Conrey and
Iwaniec consider the sum
\begin{align*}
  S(\chi_1,\chi_2)&=\sum_{x,y\in\Fp} \chi_1(xy(x+1)(y+1))\chi_2(xy-1)\\
  &=\sum_{x\in\Fp}
  \chi_1(x(x+1))\sum_{x,y\in\Fp}\chi_1(y(y+1))\chi_2(xy-1).
\end{align*}
They prove:

\begin{theorem}[Conrey-Iwaniec]\label{th-ci}
Let $\chi_1$ be a non-trivial multiplicative character modulo $p$, and
let $\chi_2$ be any multiplicative character modulo $p$. Then
$$
S(\chi_1,\chi_2)\ll p
$$
where the implied constant is absolute.
\end{theorem}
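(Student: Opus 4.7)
The key observation is the multiplicativity
$$\chi_1(xy(x+1)(y+1)) = \chi_1(x(x+1))\,\chi_1(y(y+1)),$$
which allows one to re-express
$$S(\chi_1,\chi_2) = \sum_{x\in\Fp} \chi_1(x(x+1))\,H(x), \qquad H(x) = \sum_{y\in\Fp}\chi_1(y(y+1))\,\chi_2(xy-1),$$
recognising $S(\chi_1,\chi_2)$ as a one-variable inner product, one of whose factors is the trace function of the Kummer sheaf $\sheaf{F}_1 = \sheaf{L}_{\chi_1(X(X+1))}$. In the language of Section~\ref{sec-examples}, set $\sheaf{F}_2 = \sheaf{L}_{\chi_1(Y(Y+1))}$ and $\sheaf{K} = \sheaf{L}_{\chi_2(XY-1)}$, and let $\sheaf{G} = T^1_{\sheaf{K}}(\sheaf{F}_2)(1/2)$; by proper base change and the Grothendieck--Lefschetz formula, $-H(x)/\sqrt{p}$ equals $\frfn{\sheaf{G}}(x)$ up to the fibral contributions of $R^0p_{1,!}$ and $R^2p_{1,!}$, which are controlled by Lemma~\ref{lm-tensor} and by a direct inspection of the Kummer specializations $\sheaf{L}_x$.

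The next step is to bound the conductor of $\sheaf{G}$ by an absolute constant independent of $p$. Theorem~\ref{th-conductor} is stated for Artin-Schreier kernels, but its proof strategy --- the global conductor bound of Lemma~\ref{lm-cd-bound} together with fibral Betti estimates like Corollary~\ref{cor-fiber-dim} --- applies equally when the kernel is a tame Kummer sheaf of bounded singular support; alternatively, one may decompose $\chi_2(xy-1) = g(\bar\chi_2)^{-1}\sum_{a\in\Fpt} \bar\chi_2(a)\,\psi(a(xy-1))$, so that each summand involves an honest Artin-Schreier kernel for which Theorem~\ref{th-conductor} directly applies. Either route yields $\cond(\sheaf{G}) \ll 1$ with implied constant depending only on the ramification of $\chi_1$ and $\chi_2$.

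With this uniform conductor bound, Proposition~\ref{pr-principle} --- or equivalently a direct application of the Riemann Hypothesis of~\cite{weilii} combined with Lemmas~\ref{lm-cont-hic} and~\ref{lm-tensor-cond} applied to $\sheaf{F}_1 \otimes \sheaf{G}$ --- gives
$$\Bigl|\sum_{x\in\Fp} \chi_1(x(x+1))\,\frfn{\sheaf{G}}(x)\Bigr|\ll \sqrt{p},$$
and hence $|S(\chi_1,\chi_2)|\ll p$, provided that no geometrically irreducible component of the weight-$1$ part of $\sheaf{G}$ is geometrically isomorphic to $\sheaf{L}_{\bar\chi_1(X(X+1))}$.

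Excluding this rank-one ``diagonal'' component is the main obstacle. I would settle it by a local monodromy analysis: the sheaf $\sheaf{G}$ is a specialisation of a two-variable hypergeometric construction \textit{\`a la} Katz built out of the Kummer kernel $\sheaf{L}_{\chi_2}$ and the Kummer sheaf $\sheaf{F}_2$, its generic rank is at least two, and its local characters at the singularities $0$, $-1$, $\infty$ are naturally expressible as products involving both $\chi_1$ and $\chi_2$, which are incompatible with the single character $\bar\chi_1$ of $\sheaf{L}_{\bar\chi_1(X(X+1))}$. If the monodromy comparison proves delicate in some exceptional configuration of $(\chi_1,\chi_2)$, a fallback is to verify the non-isomorphism by computing one or two explicit Frobenius traces of $\sheaf{G}$ at well-chosen points of $\Fp$, or by a second-moment estimate for $\sum_x |H(x)|^2$, either of which quantitatively separates $\sheaf{G}$ from having $\sheaf{L}_{\bar\chi_1(X(X+1))}$ as a direct summand.
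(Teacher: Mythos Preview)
Your overall framework --- writing $S(\chi_1,\chi_2)$ as the inner product of $\frfn{\sheaf{F}_1}$ with the trace function of $\sheaf{G}=T^1_{\sheaf{K}}(\sheaf{F}_2)$, bounding $\cond(\sheaf{G})$ uniformly, and then appealing to the Riemann Hypothesis --- is exactly the paper's route. Two points deserve comment.

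First, your Gauss-sum alternative for the conductor bound does not work as stated. Writing $\chi_2(xy-1)=g(\bar\chi_2)^{-1}\sum_a \bar\chi_2(a)\psi(a(xy-1))$ expresses $H(x)$ as a sum of $p-1$ terms, each of which is indeed controlled by Theorem~\ref{th-conductor}; but summing the $p-1$ resulting bounds costs a factor of $p$, and there is no sheaf-theoretic direct sum here to which one could apply a single conductor estimate. The correct route is your first one: the paper proves a Kummer analogue of Theorem~\ref{th-conductor} (stated as Theorem~\ref{th-kummer}), and this is what gives $\cond(\sheaf{G})\ll 1$.

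Second, on the diagonal exclusion, the paper's argument is cleaner than either of your primary suggestions. Rather than comparing local monodromy characters of $\sheaf{G}$ against those of $\sheaf{L}_{\bar\chi_1(X(X+1))}$ (which is delicate and case-dependent), or checking a few Frobenius traces, the paper shows two things: (i) the generic rank of $\sheaf{G}$ is $2$, by a direct Euler--Poincar\'e computation on the fibers; and (ii) the weight-$1$ part of $\sheaf{G}$ is geometrically \emph{irreducible}, via the diophantine criterion of Lemma~\ref{lm-irred-crit}, i.e.\ exactly the second-moment calculation $\sum_x|H(x)|^2$ that you list as a fallback. The point is that the second moment gives irreducibility, not merely non-isomorphism with one specific rank-$1$ sheaf; once $\sheaf{G}$ is irreducible of rank $2$, it cannot contain \emph{any} rank-$1$ summand, and the diagonal is excluded for free. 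The second-moment computation itself has a neat feature: after expanding, the off-diagonal sum over $x$ of $\chi_{2}(xy_1-1)\bar\chi_{2}(xy_2-1)$ is evaluated exactly using the bijection $x\mapsto (xy_1-1)/(xy_2-1)$ on $\Pp^1$, reducing the off-diagonal contribution to a single factorised sum that is $O(p)$ by the Riemann Hypothesis (using that $\sheaf{F}_2$ is not geometrically isomorphic to $\sheaf{L}_{\chi_2}$).
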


This is~\cite[Lemma 13.1]{CI}, slightly generalized, since we do not
assume that $\chi_1$ is a real character. Conrey and Iwaniec
remark~\cite[Remarks, p. 1208]{CI} that their main result concerning
$L$-functions would be considerably weakened if (for $\chi_1$ a real
character modulo $p$, for many primes $p$) there existed a single
character $\chi_2$ for which the size of the sum would be $p^{3/2}$.

\begin{remark}
  Some natural generalizations of these sums have appeared recently in
  the work of Petrow and Young~\cite{PY} on the Weyl bound for
  Dirichlet $L$-functions. They prove the analogue of the theorem of
  Conrey and Iwaniec using some of the ideas in this paper.
\end{remark}

We will explain how to prove Theorem~\ref{th-ci} using the ideas of
cohomological transforms.  The sums $p^{-1/2}S(\chi_1,\chi_2)$ are
naturally presented in the form discussed above, namely as the inner
product of the trace function of the dual of the Kummer sheaf
\begin{equation}\label{defF1}
\sheaf{F}_1=\sheaf{L}_{\chi_1(X(X+1))}
\end{equation}
with that of the transform sheaf
\begin{equation}\label{defKummersheaf}
  \sheaf{G}=T^1_{\sheaf{K}}(\mcF_1)(1/2)=
  T^1_{\sheaf{K}}(\sheaf{L}_{\chi_1(Y(Y+1))})(1/2)\text{ where }
  \sheaf{K}=\sheaf{L}_{\chi_2(XY-1)}
\end{equation}
(the latter is defined as the extension by $0$ of the Kummer sheaf
$\sheaf{L}_{\chi_2(XY-1)}$ on the open set complement of the curve
$XY-1$, see below for the general definition.)
\par
More precisely, the trace function of $\sheaf{G}$ is 
$$
\frfn{\sheaf{G}}(x)=-\frac1{p^{1/2}}\sum_{y\in\Fp}\chi_1(y(y+1))\chi_2(xy-1)
$$
for all $x\in \Fp$, provided $\chi_1\not=1$: indeed, by the trace
formula and the proper base change theorem, it is enough to show that
$T^0_{\sheaf{K}}(\sheaf{L}_{\chi_1(Y(Y+1))})=
T^2_{\sheaf{K}}(\sheaf{L}_{\chi_1(Y(Y+1))})=0$ in that case. The
former is true by Lemma~\ref{lm-tensor}, and the latter because the
fiber above $x\in \bar{\Ff}_p$ is
$$
H^2_c(\Aa^1\times\bar{\Ff}_p,
\sheaf{L}_{\chi_1(Y(Y+1))}\otimes\sheaf{L}_{\chi_2(xY-1)})=0
$$
(since $\chi_1\not=1$, this can only be non-zero if the second tensor
factor is ramified at $0$ and $-1$, but it is in fact always
unramified at $0$.)
\par
The kernel $\sheaf{K}$ is not of the type considered in
Theorem~\ref{th-conductor}. However, it is easy to adapt the proof of
this result to derive an analogue for multiplicative characters. These
we define in general in analogy with Definition~\ref{def-as}:

\begin{definition}
  Given a non-trivial multiplicative $\ell$-adic character of
  $\Fq^{\times}$, we denote by $\mcL_\chi$ the associated Kummer sheaf
  on $\mathbf{G}_{m,\Fq}$ (see~\cite[Sommes Trig.]{deligne}).  Let
  $f\in\Fq(X,Y)$ be a rational function, $U$ the open set where the
  numerator and denominator are both non-zero, with
  $j\,:\, U\injecte \Aa^2$ the open immersion; let
  $f_U\,:\, U\lra \Gg_m$ be the associated morphism, then the {\em
    Kummer sheaf} associated to $f$ is the constructible $\ell$-adic
  sheaf on $\Aa^2_{\Fq}$ defined as
$$
\sheaf{L}_{\chi(f)}=j_!f_U^*\sheaf{L}_{\chi}.
$$
\end{definition}

\begin{theorem}[Conductor of Kummer transforms]
  Let $\Fq$ be a finite field of order $q$ and characteristic $p$,
  $\ell$ a prime distinct from $p$. Let $\sheaf{K}$ be an $\ell$-adic
  sheaf on $\Aa^1\times \Aa^1$ over $\Fq$ of the form
  $\sheaf{K}=\sheaf{L}_{\chi(f)}$.
\par
For constructible sheaves $\sheaf{F}$ on $\Aa^1_{\Fq}$, and $0\leq
i\leq 2$, let
$$
T^i_{\sheaf{K}}(\sheaf{F})=R^ip_{1,!}(p_2^*\sheaf{F}\otimes\sheaf{K}).
$$
\par
The sheaves $T^i_{\sheaf{K}}(\sheaf{F})$ are constructible and there
exists an absolute constant $A\geq 1$ such that
$$
\cond(T^i_{\sheaf{K}}(\sheaf{F}))\leq
(2\cond(\sheaf{K})\cond(\sheaf{F}))^A
$$
and moreover
$$
\dim H^i_c(\Aa^2\times\bFq,p_2^*\sheaf{F}\otimes\sheaf{K}) \leq
(2\cond(f)\cond(\sheaf{F}))^A.
$$
\end{theorem}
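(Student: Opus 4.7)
The plan is to follow exactly the three-step strategy that establishes Theorems~\ref{th-conductor} and~\ref{th-betti}, replacing the Artin--Schreier sheaf $\sheaf{L}_{\psi(f)}$ throughout by the Kummer sheaf $\sheaf{L}_{\chi(f)}$. The structural arguments --- spectral sequences, proper base change, Lemma~\ref{lm-cd-bound}, and Lemma~\ref{lm-criterion-constant} --- are insensitive to the rank-one kernel, so the task is mostly to produce the correct Kummer analogues of the specialization and Betti-number inputs. A pleasant simplification is that Kummer sheaves are tamely ramified, so Swan conductors vanish throughout and the constraint $\cond(f)<p$ becomes unnecessary.

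First I would establish the Kummer analogue of Lemmas~\ref{lm-as1} and~\ref{lm-as2}. In one variable, $\sheaf{L}_{\chi(g)}$ is lisse off the zeros and poles of $g$, has rank one, and has vanishing Swan conductor everywhere; its conductor is bounded by $1+\deg g_1+\deg g_2$. For the two-variable sheaf $\sheaf{K}=\sheaf{L}_{\chi(f)}$, the fiber $\sheaf{K}_x=j_x^*\sheaf{K}$ at $x\in\bFq$ is either $0$ (when $\{x\}\times\Aa^1$ lies in the zero locus of the numerator or denominator of $f$) or else agrees with $j_!\sheaf{L}_{\chi(f_x)}$ on the complement of finitely many points, yielding $\cond(\sheaf{K}_x)\leq 2\cond(\sheaf{K})$. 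Corollary~\ref{cor-fiber-dim} then carries over verbatim to give $\dim H^1_c(\Aa^1\times\bFq,\sheaf{F}\otimes\sheaf{K}_x) \ll \cond(f)^4\cond(\sheaf{F})^4$.

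Next I would treat the trivial input $\sheaf{F}=\bQl$. The global Betti numbers $\dim H^i_c(\Aa^2\times\bFq,\sheaf{K})$ are bounded polynomially in $\cond(f)$ by the Bombieri--Adolphson--Sperber--Katz bounds \cite{katz-betti}, which apply equally well to Kummer kernels. Applying Lemma~\ref{lm-cd-bound} to the transform sheaves $T^i_{\sheaf{K}}(\bQl)$ together with stalk estimates coming from the one-variable specialization lemma, one obtains the polynomial conductor bound in this special case, exactly as in Step~(1) of the proof of Theorem~\ref{th-conductor}.

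Then I would handle the $H^2_c$ bound for general $\sheaf{F}$. As in Step~(2), the key identity is that swapping the two projections identifies $p_2^*\sheaf{F}\otimes\sheaf{K}$ with a sheaf whose $R^ip_{2,!}$ is the transform of $\bQl$ with respect to the flipped Kummer kernel $\sheaf{L}_{\chi(f(Y,X))}$; the preceding step thus controls its conductor, and a Leray spectral sequence plus Lemma~\ref{lm-cont-hic} and Lemma~\ref{lm-tensor-cond} give the Betti bound for $i=2$. Finally, for general $i$ I would deduce the conductor bound for $T^i_{\sheaf{K}}(\sheaf{F})$ by combining: rank control from the fiberwise estimate of Corollary~\ref{cor-fiber-dim}; a bound for the number of singularities via Lemma~\ref{lm-criterion-constant}; a punctual-part bound from the proper base change expression of $H^0_c$ of the stalk; and a Swan-conductor sum bound through Lemma~\ref{lm-cd-bound} applied with $\sigma(\sheaf{G})=\dim H^1_c(\bar{U},\sheaf{G})$, which reduces to the $H^2_c$ bound just obtained (via the ``universal coefficient'' type inequality for $H^1_c$ of a middle-extension).

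The main obstacle, as in the Artin--Schreier case, is the bookkeeping required for the global Betti number bound for sheaves of the form $p_2^*\sheaf{F}\otimes\sheaf{K}$ on $\Aa^2$: one must verify that Katz's polynomial Betti estimates apply to these mildly singular two-variable sheaves with a constant depending only on $\cond(f)$ and $\cond(\sheaf{F})$. Once this is secured, all the remaining reductions are formal rewrites of the corresponding steps in Sections devoted to Theorems~\ref{th-conductor} and~\ref{th-betti}.
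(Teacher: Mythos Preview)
Your proposal is correct and follows essentially the same approach as the paper's sketch: rerun the Artin--Schreier argument with the two modifications you identified (tame ramification makes all Swan conductors vanish, and the specialized sheaves $\sheaf{L}_{\chi(f_x)}$ are singular at zeros as well as poles of $f_x$), invoking the Adolphson--Sperber or Katz Betti bounds from~\cite{katz-betti} for the trivial-input case. The paper additionally flags that the zero-and-pole ramification requires a corresponding adjustment to Lemma~\ref{lm-rational-functions} (1) and (3) when establishing the Kummer analogue of Lemma~\ref{lm-constant} (1), but you have the right ingredients in hand.
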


\begin{proof}[Sketch of proof]
  One can follow line by line the proof of Theorems~\ref{th-conductor}
  and~\ref{th-betti}. The only differences are:
\par
\begin{enumerate}
\item we deal separately with the case 
$$f(X,Y)=f_1(X)f_2(Y)$$
(instead of $f(X,Y)=f_1(X)+f_2(Y)$ in the case of an additive character);
\item 	to bound the Betti numbers
$$
\dim H^i_c(\Aa^2\times\bFq,\sheaf{K})
$$
(i.e., when the input sheaf is trivial), one uses the results of
Adolphson-Sperber or Katz~\cite[Th. 12]{katz-betti} instead of those
of Bombieri (which are only proved for additive characters); an alternative is to lift the tame sheaves to characteristic $0$.

\end{enumerate}

\end{proof}

In particular, in our case,
$\sheaf{G}=T^1_{\sheaf{K}}(\sheaf{L}_{\chi_1(Y(Y+1))})(1/2)$ has conductor
absolutely bounded as $\chi_1$, $\chi_2$ and $p$ vary.  By the Riemann
Hypothesis, the sheaf $\sheaf{G}$ is also mixed of weights $\leq 0$,
and therefore the principle above shows that, for all primes $p$, and
for all characters $\chi_2$, we have
$$
S(\chi_1,\chi_2)\ll p
$$
with an absolute implied constant, for all but a bounded number of
multiplicative characters $\chi_1$ modulo $p$ (since
$\sheaf{L}_{\chi_1(X(X+1))}\simeq \sheaf{L}_{\chi'_1(X(X+1))}$ if and
only if $\chi_1=\chi'_1$.)
\par
In order to go deeper and show that, in fact, these exceptions do not
exist, we must look more carefully at $\sheaf{G}$.

\begin{proposition}\label{propCI}
  Let $\chi_1$ and $\chi_2$ be non-trivial characters of $\Fqt$, and
  let $\mcF_1$, $\mcK$ and $\mcG$ be the constructible sheaves defined
  in \eqref{defF1} and \eqref{defKummersheaf} and let $\mcG^{0}$ be
  the weight $0$ part of $\mcG$.
  \begin{enumerate}
  \item The sheaves $\mcG$ and $\mcG^{0}$ have generic rank $2$.
  \item The sheaf $\mcG^{0}$ is lisse on $W=\Aa^1-\{0,-1\}$ and
    geometrically irreducible.
  \end{enumerate}
\end{proposition}

If we grant this proposition let us show how to conclude the proof of
Theorem~\ref{th-ci}. By Lemma \ref{lm-devisse}, we have
$$
p^{-1/2}S(\chi_1,\chi_2)=\sum_{x\in\Fp}t_{\mcF_1}(x)t_\mcG(x)
=\sum_{x\in\Fp}t_{\mcF_1}(x)t_{\mcG^{0}}(x)+O(p^{1/2})
$$
where the implicit constant is absolute. Since $\mcG^{0}$ is
geometrically irreducible of rank $2$ on~$W$ and $\mcF_1$ has rank
$1$, the sheaf $\mcG^{0}$ cannot not be geometrically isomorphic to
the dual of $\mcF_1$, and hence
$$
p^{-1/2}S(\chi_1,\chi_2)=O(p^{1/2})
$$
where the implied constant is absolute, by the Riemann Hypothesis and
the conductor bound $\cond(\mcG^0)\ll 1$. This finishes the proof of
Theorem~\ref{th-ci}.

For the proof of (2), we recall a very useful diophantine criterion
for irreducibility of Katz (see~\cite[Lemma 7.0.3]{katz-rls}).

\begin{lemma}[Irreducibility criterion]\label{lm-irred-crit}
  Let $\Fq$ be a finite field of characteristic $p$, let $\ell\not=p$
  be a prime number and let $\sheaf{F}$ be an $\ell$-adic
  constructible sheaf on $\Aa^1_{\Fp}$ which is mixed of weights
  $\leq 0$. Then we have
  \begin{equation}\label{eq-irred-hyp}
    \frac{1}{q^{\nu}} \sum_{x\in
      \Fqn}|\frfn{\sheaf{F}}(x,q^{\nu})|^2 =1+O(q^{-\nu/2})
  \end{equation}
  for $\nu\geq 1$, if and only if the middle-extension part of weight
  $0$ of $\sheaf{F}$ is geometrically irreducible, i.e., if and only if,
  for any dense open subset $U$ where $\sheaf{F}$ is lisse, the
  restriction of the weight $0$ part of $\sheaf{F}$ to $U\times\bFq$
  corresponds to an irreducible representation of the geometric
  fundamental group of $U$.
\end{lemma}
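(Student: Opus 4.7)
The plan is to reduce to the case where $\sheaf{F}$ is itself a middle-extension sheaf pointwise pure of weight $0$, and then apply the Grothendieck--Lefschetz trace formula to $\sheaf{F}\otimes\sheaf{F}^\vee$. For the first reduction, applying Lemma~\ref{lm-devisse} over each extension $\Fqn$ yields a decomposition $\frfn{\sheaf{F}}=\frfn{\sheaf{F}^{mid}}+t_1+t_2$ in which $t_1$ is supported on $O(1)$ values with $|t_1|\ll 1$ and $|t_2|\ll q^{-\nu/2}$, all implied constants depending only on $\cond(\sheaf{F})$. Expanding $|\frfn{\sheaf{F}}|^2$ and using $|\frfn{\sheaf{F}^{mid}}(x,q^\nu)|\ll 1$, the cross-terms and the pure $t_i$-terms contribute $O(q^{-\nu/2})$ to the normalized sum, so~\eqref{eq-irred-hyp} holds for $\sheaf{F}$ if and only if it holds for $\sheaf{F}^{mid}$.

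Assume then that $\sheaf{F}$ is a middle-extension sheaf pointwise pure of weight $0$, lisse on a dense open $U\subset\Aa^1$. For $x\in U(\Fqn)$, purity gives $\overline{\frfn{\sheaf{F}}(x,q^\nu)}=\frfn{\sheaf{F}^\vee}(x,q^\nu)$, and removing the $O(1)$ points of $(\Aa^1\setminus U)(\Fqn)$ costs $O(q^{-\nu})$. The Grothendieck--Lefschetz formula then gives
$$
\frac{1}{q^\nu}\sum_{x\in\Fqn}|\frfn{\sheaf{F}}(x,q^\nu)|^2 = \frac{1}{q^\nu}\sum_{i=0}^{2}(-1)^i\Tr(\frob^\nu\mid H^i_c(\bar U,\sheaf{F}\otimes\sheaf{F}^\vee)) + O(q^{-\nu/2}).
$$
Since $\sheaf{F}\otimes\sheaf{F}^\vee$ is pointwise pure of weight $0$, Deligne's Riemann Hypothesis~\cite{weilii} makes $H^i_c$ mixed of weight $\leq i$, so the $H^0_c$ and $H^1_c$ terms are absorbed into $O(q^{-\nu/2})$ and the main term is contributed by $H^2_c$. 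Poincar\'e duality identifies
$$
H^2_c(\bar U,\sheaf{F}\otimes\sheaf{F}^\vee) \simeq \End(\sheaf{F}_{\bar\eta})_{\pi_1^{geom}(\bar U)}\otimes \bQl(-1),
$$
and by Deligne's semisimplicity for pure representations the coinvariant space agrees with the invariant space $\End(\sheaf{F}_{\bar\eta})^{\pi_1^{geom}}$. Thus the normalized main term equals $\Tr(\frob^\nu\mid \End(\sheaf{F}_{\bar\eta})^{\pi_1^{geom}})$.

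Decomposing $\sheaf{F}_{\bar\eta}\simeq\bigoplus_i m_i\rho_i$ into distinct geometric irreducibles $\rho_i$, Schur's lemma gives $\dim \End(\sheaf{F}_{\bar\eta})^{\pi_1^{geom}}=\sum_i m_i^2$, which equals $1$ exactly when $\sheaf{F}$ is geometrically irreducible. If $\sheaf{F}$ is geometrically irreducible, the invariant space is spanned by the identity endomorphism, on which arithmetic Frobenius acts trivially, so the main term is exactly $1$ for every $\nu$, proving the ``only if'' direction. For the converse, if~\eqref{eq-irred-hyp} holds, the Frobenius eigenvalues $\alpha_1,\ldots,\alpha_d$ on $\End(\sheaf{F}_{\bar\eta})^{\pi_1^{geom}}$ (each of modulus $1$ by purity) satisfy $\sum_{j}\alpha_j^\nu = 1 + O(q^{-\nu/2})$ uniformly in $\nu$; Ces\`aro-averaging $|\sum_j \alpha_j^\nu|^2$ over $\nu\leq N$ and letting $N\to\infty$ yields on the right-hand side the sum $\sum_{j,k}[\alpha_j=\alpha_k] \geq d$, while the left-hand side tends to $1$, forcing $d=1$ and $\alpha_1=1$; hence $\sum_i m_i^2 = 1$ and $\sheaf{F}$ is geometrically irreducible. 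The main obstacle is precisely this converse step: transferring the asymptotic bound into the structural equality $d=1$ requires careful use of how the arithmetic Frobenius permutes the geometric isomorphism classes $\rho_i$ and acts on the invariant space, and it is where the uniformity in $\nu$ of the hypothesis~\eqref{eq-irred-hyp} is essential.
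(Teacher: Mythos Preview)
Your reduction via Lemma~\ref{lm-devisse} to the case of a middle-extension sheaf pointwise pure of weight~$0$, followed by restriction to a dense open $U$ where $\sheaf{F}$ is lisse, is exactly what the paper does. From that point on the paper simply invokes \cite[Lemma~7.0.3]{katz-rls} as a black box, whereas you unpack that lemma in full: you apply the Grothendieck--Lefschetz trace formula to $\sheaf{F}\otimes\sheaf{F}^\vee$, use Deligne's weight bounds to isolate the $H^2_c$-contribution, identify it with $\End(\sheaf{F}_{\bar\eta})^{\pi_1^{geom}}$ via coinvariants and semisimplicity, and then handle the converse direction by Ces\`aro-averaging $|\sum_j\alpha_j^\nu|^2$ to force $\sum_i m_i^2=1$. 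This is correct and is essentially the standard proof of Katz's criterion, so the two arguments coincide once one looks inside the citation; your version is more self-contained, the paper's is shorter. One cosmetic point: your labeling of ``left-hand side'' and ``right-hand side'' in the Ces\`aro step is inverted, and the final paragraph reads as commentary rather than proof, but neither affects the mathematics.
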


\begin{proof}
  For $\nu\geq 1$ fixed, let
$$
\frfn{\sheaf{F}}(x,q^{\nu})=\frfn{\sheaf{F}^{mid}}(x,q^{\nu})+t_1(x)+t_2(x)
$$
for $x\in\Fqn$ be the decomposition of Lemma~\ref{lm-devisse} (applied
to $\Fqn$). We wish to prove that $\sheaf{F}^{mid}$ is geometrically
irreducible. From the properties of $t_1$ and $t_2$, we see that
$$
\frac{1}{q^{\nu}} \sum_{x\in \Fqn}|\frfn{\sheaf{F}}(x,q^{\nu})|^2=
\frac{1}{q^{\nu}} \sum_{x\in
  \Fqn}|\frfn{\sheaf{F}^{mid}}(x,q^{\nu})|^2+O(q^{-\nu})
$$
for $\nu\geq 1$. Now let $U$ be a dense open subset of $\Aa^1$ where
$\sheaf{F}^{mid}$ is lisse. Then we have
$$
\frac{1}{q^{\nu}} \sum_{x\in
  U(\Fqn)}|\frfn{\sheaf{F}^{mid}}(x,q^{\nu})|^2= \frac{1}{q^{\nu}}
\sum_{x\in \Fqn}|\frfn{\sheaf{F}^{mid}}(x,q^{\nu})|^2+O(q^{-\nu}),
$$
for $\nu\geq 1$, since the complement is finite. Therefore, we
have~(\ref{eq-irred-hyp}) if and only if
$$
\frac{1}{q^{\nu}} \sum_{x\in
  U(\Fqn)}|\frfn{\sheaf{F}^{mid}}(x,q^{\nu})|^2=1+O(q^{-\nu/2})
$$
for $\nu\geq 1$. But by~\cite[Lemma 7.0.3]{katz-rls} applied to the
lisse sheaf $\sheaf{F}^{mid}$ on $U$, which is pure of weight $0$,
this last condition holds if and only if $\sheaf{F}^{mid}$ is
geometrically irreducible on $U$.
\end{proof}
 
\begin{proof}[Proof of Proposition \ref{propCI}]
  We begin by checking the generic rank of~$\mcG$. The fiber of
  $\sheaf{G}$ over $x\in \bFq$ is
  $$
  H^1_c(\Aa^1\times\bFq,\sheaf{L}_{\chi_1(Y(Y+1))}
  \otimes\sheaf{L}_{\chi_2(xY-1)})(1/2).
  $$
  \par
  By the Euler-Poincar\'e formula (see~(\ref{eq-euler-poincare-line})),
  its dimension is
  $$
  \dim H^1_c(\Aa^1\times\bFq,\sheaf{L}_{\chi_1(Y(Y+1))}
  \otimes\sheaf{L}_{\chi_2(xY-1)})=
  -1+3=2
  $$
  if $x\not=-1$ (so that the sheaf is ramified at the three points
  $y=0$, $-1$ and $1/x$). Hence the generic rank of~$\mcG$ is $2$.
  \par
  We next apply the irreducibility criterion to $\mcG$, which is mixed
  of weights $\leq 0$, to prove that the part of weight~$0$ is
  geometrically irreducible on any dense open set where it is lisse.
\par
For $\nu\geq 1$ and $i=1,2$, we denote by $\chi_{i,\nu}$ the extension
$\chi_i\circ N_{\Fqn/\Fq}$ of $\chi_i$ to $\Fqn$, we have
\begin{align*}
\frac{1}{q^{\nu}}\sum_{x\in \Fqn}|\frfn{\sheaf{G}}(x,q^{\nu})|^2&=
\frac{1}{q^{2\nu}}
\sum_{x\in\Fqn}
\Bigl|
\sum_{y\in \Fqn}\frfn{\sheaf{F}_1}(x,q^{\nu})
\chi_{2,\nu}(xy-1)
\Bigr|^2\\
&=\frac{1}{q^{2\nu}}
\sum_{y_1,y_2\in \Fqn}
\frfn{\sheaf{F}_1}(y_1,q^{\nu})
\overline{\frfn{\sheaf{F}_1}(y_2,q^{\nu})}
\sum_{x\in\Fqn}
\chi_{2,\nu}(xy_1-1)\overline{\chi_{2,\nu}(xy_2-1)}.
\end{align*}
\par
The contribution of the diagonal terms  $y_1=y_2$ to this sum is
\begin{equation}\label{F1geomirred}
	\frac{q^{\nu}-1}{q^{2\nu}} \sum_{y\in \Fqn}
|\frfn{\sheaf{F}_1}(y,q^{\nu})|^2= \frac{1}{q^{\nu}} \sum_{y\in \Fqn}
|\frfn{\sheaf{F}_1}(y,q^{\nu})|^2+O(q^{-\nu})=1+O(q^{-\nu})
\end{equation}
since $\frfn{\sheaf{F}_1}(y,q^{\nu})=\chi_{1,\nu}(y(y+1))$.
 \par
If $y_1\not=y_2$, the map 
$$
x\mapsto \frac{xy_1-1}{xy_2-1}
$$
is a bijection on $\Pp^1(\Fqn)$. Hence, in that case, we have
$$
\sum_{x\in\Fqn}
\chi_{2,\nu}(xy_1-1)\overline{\chi_{2,\nu}(xy_2-1)}=
-\chi_{2,\nu}(y_1)\overline{\chi_{2,\nu}(y_2)}
$$
(we write it in this way to incorporate the case $y_2=0$, in which
case the map is a bijection of $\Fqn$, while otherwise the sum over
$x\in\Fqn$ misses the point $y_1/y_2$.)
\par
Thus we get an off-diagonal contribution equal to
$$
-\frac{1}{q^{2\nu}}
\sum_{\stacksum{y_1,y_2\in \Fqn}{y_1\not=y_2}}
\frfn{\sheaf{F}_1}(y_1,q^{\nu})
\overline{\frfn{\sheaf{F}_1}(y_2,q^{\nu})}
\chi_{2,\nu}(y_1)\overline{\chi_{2,\nu}(y_2)}.
$$
\par 
Inserting the diagonal in this sum, we find that it is equal to
$$
-\frac{1}{q^{2\nu}}
\Bigl(
\Bigl|\sum_{y\in \Fqn}\frfn{\sheaf{F}_1}(y,q^{\nu})\chi_{2,\nu}(y)
\Bigr|^2-
\sum_{y\in\Fqn^{\times}}
|\frfn{\sheaf{F}_1}(y,q^{\nu})|^2
\Bigr).
$$
\par
Since $\sheaf{F}_1$ is geometrically irreducible but not geometrically
isomorphic to $\sheaf{L}_{\chi_2}$ (indeed $\mcF_1$ is ramified at
$-1$ while $\mcL_{\chi_2}$ is lisse there), by the Riemann Hypothesis
(in that case, due to A.Weil), we have
\begin{equation}\label{offdiag}
\Bigl|\sum_{y\in \Fqn}\frfn{\sheaf{F}_1}(y,q^{\nu})\chi_{2,\nu}(y)
\Bigr|^2=O(q^{\nu}),	
\end{equation}
while the bound
$$
\sum_{y\in\Fqn^{\times}} |\frfn{\sheaf{F}_1}(y,q^{\nu})|^2=O(q^{\nu})
$$
is immediate. Hence the off-diagonal contribution is $O(q^{-\nu})$,
and the irreducibility criterion does apply.
\par
Thus~$\mcG^0$ is geometrically irreducible on any open set where it is
lisse. We will now prove that~$\mcG$ is lisse and pure of weight~$0$
on~$W$. It then follows that~$\mcG=\mcG^0$ on~$W$, which will conclude
the proof of the proposition.

We begin by checking that $\mcG$ is lisse on $W=\Aa^1- \{0,-1\}$ using
Deligne's semicontinuity theorem (\cite[Cor. 2.1.2]{deligne-l}). We
denote by~$p_1$ and~$p_2$ the projections $(x,y)\mapsto x$ and
$(x,y)\mapsto y$ from $\Aa^2$ to~$\Aa^1$. We also denote by
$\widetilde{p}_1\colon \Aa^1\times\Pp^1\to \Aa^1$ the first
projection.  This is a smooth and proper morphism of relative
dimension~$1$.
Let~$\mcH=\mcL_{\chi_2(XY-1)}\otimes p_2^*\mcL_{\chi_1(Y(Y+1))}$ so
that~$\mcG=R^1p_{1,!}\mcH(1/2)$. Note that~$\mcH$ is lisse on
$U=\Aa^2-D$ where~$D$ is the divisor
$$
D=\{XY=1\}\cup (\Aa^1\times \{0\})\cup (\Aa^1\times \{-1\}).
$$
\par
We denote by $\widetilde{\mcH}$ the sheaf on~$\Aa^1\times\Pp^1$ which
is the extension by zero of~$\widetilde{\mcH}$ from~$\Aa^1\times\Aa^1$
to~$\Aa^1\times \Pp^1$. By definition, we have
$\mcG=R^1\widetilde{p}_{1,*}\widetilde{\mcH}(1/2)$.

Let~$\widetilde{D}$ be the complement in~$\Aa^1\times\Pp^1$ of the
open set~$U$. This is the union of~$D$ and of the line
$\Aa^1\times\{\infty\}$. 

Let~$X=\widetilde{p}_1^{-1}(W)$. By restriction, the
morphism~$\widetilde{p}_1$ defines a proper smooth morphism $X\to W$
of relative dimension~$1$. The intersection~$\widetilde{D}\cap X$ is a
divisor in~$X$ that is flat and finite (of degree~$4$) over~$W$. The
sheaf~$\widetilde{\mcH}$ is lisse on the complement
of~$\widetilde{D}\cap X$ in~$\Aa^1\times\Pp^1$.

Let~$x\in W$. The fiber $C_x$ of~$\widetilde{p}_1$ over~$x$ is
identified with~$\Pp^1$, and the restriction of~$\widetilde{\mcH}$
to~$C_x$ is identified with a lisse sheaf on the dense open set
$$
U_x=\Aa^1-\{0,-1,1/x,\infty\}\subset \Pp^1.
$$
The restriction of the sheaf~$\widetilde{\mcH}$ to~$C_x$ is (at most)
tamely ramified everywhere, hence the function~$\varphi$
of~\cite[Th. 2.1.1]{deligne-l} is constant equal to~$0$ on points
of~$W$. Corollary 2.1.2 of loc. cit. then implies that~$\mcG$ is lisse
on~$W$, as claimed.
\par
We finally prove that~$\mcG$ is pure of weight~$1$ on~$W$. We
apply~\cite[Lemma 4.22 (b)]{KMS} to the
morphism~$\widetilde{p}_1\colon X\to W$ and to the
sheaf~$\widetilde{\mcH}$ on~$X$.  For any~$x\in \Pp^1$, the
sheaf~$\widetilde{\mcH}_x$, after pullback to~$C_x=\{x\}\times\Pp^1$,
has neither punctual section nor trivial subrepresentation (as lisse
sheaf on~$U_x$).  Thus the assumptions of loc. cit. are satisfied.  It
follows that for any~$x\in W$, the part of weight~$<1$ of the stalk
at~$x$ of~$\mcG^{mid}$ is isomorphic to
$$
\bigoplus_{y\in C_x-U_x} (\widetilde{\mcH}_x)_{\bar{\eta}}^{I_y}/
(\widetilde{\mcH}_x)_{\bar{y}}.
$$
But already $(\widetilde{\mcH}_x)_{\bar{\eta}}^{I_y}=0$ at all
singular points $y\in \{0,-1,1/x,\infty\}$, so this direct sum
vanishes.
\end{proof}

\begin{remark}
  The irreducibility criterion applies more generally to show that
  $T_\mcK(\mcF)^{mid}$ is geometrically irreducible as long as $\mcF$
  is a geometrically irreducible middle-extension sheaf, pure of
  weight $0$, which is not geometrically isomorphic to
  $\mcL_{\chi_2}$. Indeed, under these assumptions, the irreducibility
  criterion shows that \eqref{F1geomirred} holds with $\mcF_1$
  replaced by $\mcF$, while \eqref{offdiag} follows from the Riemann
  Hypothesis of Deligne.
\end{remark}



\section{Setting up the proof}
\label{sec-setup}

To clarify the proof of Theorems~\ref{th-conductor}
and~\ref{th-betti}, and in view of further generalizations, we
introduce the following definition:

\begin{definition}[Continuity]
  (1) Let
$$
i\,:\, (f,\sheaf{F})\mapsto i(f,\sheaf{F})
$$
be any real-valued map taking a pair $(f,\sheaf{F})$ as input, where
$f$ is a non-constant rational function in $\Fq(X,Y)$ for some finite
field $\Fq$ and $\sheaf{F}$ is a middle-extension $\ell$-adic sheaf on
the affine line over $\Fq$.  Then we say that $i$ is \emph{continuous}
if there exists an integer $C\geq 1$ such that
$$
|i(f,\sheaf{F})|\leq (2\cond(f)\cond(\sheaf{F}))^C
$$
for all pairs $(f,\sheaf{F})$ as above such that
$\cond(f)<p$.\footnote{\ This restriction on~$\cond(\sheaf{F})$ may
  seem artificial, and it is possible that it would not be needed for
  our results. But it has no influence on the applications.}
\par
(2) Similarly, if
$$
j\,:\, f\mapsto j(f)\ (\text{resp. } k\,:\, \sheaf{F}\mapsto k(\sheaf{F}))
$$
are real-valued maps taking as input a non-constant rational function
$f\in \Fq(X,Y)$ for some finite field $\Fq$ (resp. a
middle-extension $\ell$-adic sheaf $\sheaf{F}$ on the affine line over
$\Fq$), then we say that $j$ (resp. $k$) is \emph{continuous} if and
only if there exists an integer $C\geq 1$ such that
$$
|j(f)|\leq (2\cond(f))^C (\text{resp. } |k(\sheaf{F})|\leq
(2\cond(\sheaf{F}))^C),
$$
for all $f$ with $\cond(f)<p$ (resp. all middle-extension sheaves
$\sheaf{F}$).
\par
\end{definition}

\begin{remark}
Some of our arguments are easier to follow and check if one uses a
weaker definition of continuity, where one only asks that
$$
|i(f,\sheaf{F})|\leq \Psi(\cond(f),\cond(\sheaf{F}))
$$
for some function $\Psi$ taking positive integral values. For some
basic applications, such a statement is also sufficient, and the
reader might wish to consider this as the notion of continuity in a
first reading.
\end{remark}

\begin{example}
For instance, Theorem~\ref{th-conductor} asserts that the maps
$$
(f,\sheaf{F})\mapsto \cond(T^i_{\sheaf{K}}(\sheaf{F}))
$$
are continuous, and Theorem~\ref{th-betti} that the maps
$$
(f,\sheaf{F})\mapsto \dim
H^i_c(\Aa^2\times\bFq,p_2^*\sheaf{F}\otimes\sheaf{K})
$$
are continuous. Lemma~\ref{lm-cont-hic} proves that the functions
$$
\sheaf{F}\mapsto \dim H^i_c(\Aa^1\times\bFq,\sheaf{F})
$$
are continuous.
\end{example}

Clearly, if we fix one argument of a continuous map $i(f,\sheaf{F})$
and let the other vary, this gives a continuous map of this second
argument. Also, a sum $i_1+i_2$ of continuous functions is also
continuous, as well as a product $i_1i_2$.
\par
For simplicity, we denote
\begin{align*}
c_i(f,\sheaf{F})&=\cond(T^i_{\sheaf{K}}(\sheaf{F})),\quad\quad 0\leq i\leq 2\\
h^j(f,\sheaf{F})&=\dim
H^j_c(\Aa^2\times\bFq,p_2^*\sheaf{F}\otimes
\sheaf{L}_{\psi(f)}),\quad\quad 0\leq j\leq 4\\
m(f,\sheaf{F})&=\rank(T^1_{\sheaf{K}}(\sheaf{F}))+\pct(T^1_{\sheaf{K}}(\sheaf{F})).
\end{align*}
\par
The proof of Theorems~\ref{th-conductor} and~\ref{th-betti} will be
based on the following steps:

\begin{proposition}\label{pr-steps}
The following assertions are true:
\begin{enumerate}
	\item \label{un} The map $(f,\sheaf{F})\mapsto c_0(f,\sheaf{F})$ is continuous.

\item \label{de}  For $0\leq j\leq 4$, the map
$$
f\mapsto h^j(f,\bQl)=\dim
H^j_c(\Aa^2\times\bFq,\sheaf{L}_{\psi(f)})
$$ 
is continuous.
\item 
\begin{enumerate}

\item\label{trb} If $f\mapsto h^2(f,\bQl)$ is continuous, then $f\mapsto c_2(f,\bQl)$ is
continuous;
\item   \label{tr} if $(f,\sheaf{F})\mapsto h^2(f,\mcF)$ is continuous, then
$(f,\sheaf{F})\mapsto c_2(f,\sheaf{F})$ is continuous.
\end{enumerate}
\item\label{qu} If $f\mapsto c_1(f,\bQl)$ and $f\mapsto c_2(f,\bQl)$ are
continuous, then $(f,\sheaf{F})\mapsto h^2(f,\sheaf{F})$ is conti\-nuous.
\item  
\begin{enumerate}
\item\label{cib}  If $f\mapsto c_2(f,\bQl)$ is continuous, then 
$f\mapsto m(f,\bQl)$ is continuous;
\item \label{ci} if $(f,\sheaf{F})\mapsto c_2(f,\sheaf{F})$ is continuous, then  $(f,\sheaf{F})\mapsto m(f,\sheaf{F})$ is continuous.

\end{enumerate}
\item 
\begin{enumerate}
\item\label{sib} If $f\mapsto m(f,\bQl)$ and $f\mapsto h^2(f,\bQl)$ are both
continuous, then  $f\mapsto c_1(f,\bQl)$ is continuous;
\item\label{si} if $(f,\sheaf{F})\mapsto m(f,\sheaf{F})$ and $(f,\sheaf{F})\mapsto h^2(f,\sheaf{F})$ are both
continuous, then  $(f,\sheaf{F})\mapsto c_1(f,\sheaf{F})$ is continuous.

\end{enumerate}
\item\label{se} If $(f,\sheaf{F})\mapsto c_i(f,\sheaf{F})$ is continuous for $0\leq i\leq 2$,
then $(f,\sheaf{F})\mapsto h^j(f,\sheaf{F})$ is continuous for all $0\leq j\leq 4$.
\end{enumerate}

\end{proposition}

We now explain how to deduce Theorems~\ref{th-conductor}
and~\ref{th-betti} from this proposition. Since this may also look
like spaghetti-mathematics, the reader may also wish to go straight to
Sections~\ref{sec-spectral} and~\ref{sec-polymath} (possibly in the
opposite order) which together give an account of the proof for the
special case of the Fourier transform (and discuss another example
arising in the \textsc{Polymath8} project), in which case the flow of
the proof is much easier to follow.
\par
\medskip
\par
First of all, $c_0$ is continuous by \eqref{un}, so we must show that $c_1$,
$c_2$ and the $h^j$ are continuous.
\par
\textbf{Step 1.} Using \eqref{de}, we can apply \eqref{trb} and deduce
that $f\mapsto c_2(f,\bQl)$ is continuous. By \eqref{cib}, it follows
that $m(f,\bQl)$ is continuous. Combining this with \eqref{sib} and
\eqref{de} again, we deduce that $f\mapsto c_1(f,\bQl)$ is continuous.
\par
At this point, we have proved both theorems in the special case when
$\sheaf{F}=\bQl$ is the trivial sheaf.
\par
\textbf{Step 2.} From \eqref{qu} and Step 1, we see that
$(f,\sheaf{F})\mapsto h^2(f,\sheaf{F})$ is continuous. This fact
combined with \eqref{tr} shows that
$(f,\sheaf{F})\mapsto c_2(f,\sheaf{F})$ is continuous. In turn,
\eqref{ci} then proves that $(f,\sheaf{F})\mapsto m(f,\sheaf{F})$ is
continuous, and finally \eqref{si} allows us to conclude that
$(f,\sheaf{F})\mapsto c_1(f,\sheaf{F})$ is continuous.
\par
At this point we have proved Theorem~\ref{th-conductor} (and the
continuity of $(f,\sheaf{F})\mapsto h^2(f,\sheaf{F})$); by \eqref{se},
we deduce that all $h^j$ are continuous.

\begin{remark}
  (1) We will in fact establish \eqref{tr} and \eqref{trb} directly by
  proving a direct relation between $c_2(f,\sheaf{F})$ and
  $h^2(f,\sheaf{F})$ for $\mcF=\bQl$ or in general, and similarly for
  \eqref{ci} and \eqref{cib}, \eqref{si} and \eqref{sib}.
\par
(2) The most crucial points in Proposition~\ref{pr-steps} are
\begin{itemize}
\item[--] \eqref{de}, which gives the starting point of the argument
  for the trivial sheaf, and which comes from the bounds for Betti
  numbers of Bombieri, Adolphson-Sperber and Katz.
\item [--] \eqref{qu}, which allows us to pass from properties known
  for the trivial sheaf only, to properties of all sheaves.
\end{itemize} 
\par
(3) It is only in the proof of \eqref{ci} and \eqref{cib} that we will use the
restriction that continuity applies to $f$ with $\cond(f)<p$.
\end{remark}

\section{Spectral sequence argument}\label{sec-spectral}

We state here the few simple facts about spectral sequences that we
require. We first recall the basic formalism, referring
to~\cite[Appendix. B]{milne} for a survey and~\cite[Ch. 10]{rotman}
for details.
\par
Let $k$ be a fixed field. A converging (first quadrant) spectral
sequence
$$
E_2^{p,q}\Rightarrow E^n,
$$
of $k$-vector spaces involves (1) vector spaces $E_2^{p,q}$ defined
for $p$, $q\geq 0$; (2) vector spaces $E^n$ defined for $n\geq 0$; (3)
linear maps
\begin{equation*}
d_2^{p,q}\,:\, E_2^{p,q}\lra E_2^{p+2,q-1},
\end{equation*}
(called differentials)\footnote{\ Note that these differentials show
  that $p$ and $q$ do not play symmetric roles.} for all $p$ and $q$
(with the convention $E_2^{p,q}=0$ if $p$ or $q$ is negative), such
that
$$
d_2^{p,q}\circ d_2^{p-2,q+1}=0.
$$

\begin{remark}
The use of the indices $p$ and $q$ for the spectral sequence is almost
universal, although it clashes with the usual convention that $p$ is a
prime and $q$ a power of $p$.  We will use $i$ and $j$ instead of $p$
and $q$ when both notation are involved, although the difference in
context should avoid confusion.
\end{remark}

One defines
\begin{equation}\label{eq-e3}
E_3^{p,q}=\ker d_2^{p,q}/\Imag d_2^{p+2,q-1},
\end{equation}
and one shows that there are linear maps
\begin{equation}\label{eq-d3}
d_3^{p,q}\,:\, E_3^{p,q}\lra E_3^{p+3,q-2},
\end{equation}
such that $d_3^{p,q}\circ d_3^{p-3,q+2}=0$. This process is then
suitably iterated to obtain $E_j^{p,q}$ for all $j\geq 2$, and
differentials
$$
d_j^{p,q}\,:\, E_j^{p,q}\lra E_{j}^{p+j,q-j+1}
$$
(with composites vanishing).
\par
One says that \emph{the spectral sequence degenerates at the
  $E_j$-level} (where $j=2$ or $3$) if $d_i^{p,q}=0$ for all $p$,
$q\geq 0$ and $i\geq j$. When this is the case, the formalism gives
(among other things) the following relation between the $E_{j}^{p,q}$
and the spaces $E^n$: we have for all $n\geq 0$, a (non-canonical) isomorphism
\begin{equation}\label{eq-convergence}
E^n\simeq \bigoplus_{p=0}^n E_j^{p,n-p},
\end{equation}
of $k$-vector spaces. (There is often more structure involved, but
this will suffice for us.)
\par
Furthermore, whether the spectral sequence degenerates at the $E_2$ or
$E_3$ level or not, there is an exact sequence
\begin{equation}\label{eq-exact-seq}
0\ra E_2^{1,0}\lra E^1\lra E_{2}^{0,1}\lra E_2^{2,0}.
\end{equation}
\par
All these facts are stated in~\cite[p. 307--309]{milne}.  The next
proposition then summarizes all results we will need from spectral
sequences:

\begin{proposition}\label{pr-prop-seq}
  Let $k$ be a field and let
$$
E_2^{p,q}\Rightarrow E^n
$$
be a converging spectral sequence as above. Assume that $E_2^{p,q}=0$
unless $0\leq p\leq 2$ and $0\leq q\leq 2$.
\par
\emph{(1)} The spectral sequence degenerates at the $E_3$-level and we
have
\begin{equation}\label{eq-e211}
  E^2\simeq E_3^{0,2}\oplus E_2^{1,1}\oplus E_3^{2,0}.
\end{equation}
\par
\emph{(2)} We have
$$
\dim E^n\leq \sum_{p=0}^n \dim E_2^{p,n-p},
$$
and
$$
\dim E_2^{0,2}\leq \dim E^2+\dim E_2^{2,1}.
$$
\par
\emph{(3)}  Assume in addition that $E_2^{p,q}=0$ if $q=0$. We have then
$E_2^{0,1}\simeq E^1$.
\end{proposition}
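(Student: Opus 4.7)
The plan is to exploit the hypothesis that $E_2^{p,q}$ is supported in the $3\times 3$ box $\{0,1,2\}^2$: this is restrictive enough that all $d_3^{p,q}$ vanish for trivial reasons, so that~(\ref{eq-convergence}) and~(\ref{eq-exact-seq}) give the claims with only routine bookkeeping.

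For (1), I would first check degeneration at $E_3$. The differential $d_3^{p,q}\colon E_3^{p,q}\to E_3^{p+3,q-2}$, together with the fact from~(\ref{eq-e3}) that $E_3^{p,q}$ is a subquotient of $E_2^{p,q}$ and hence vanishes unless $0\leq p\leq 2$, forces the target $E_3^{p+3,q-2}$ to vanish since $p+3\geq 3$. Hence every $d_3^{p,q}=0$ and~(\ref{eq-convergence}) applies at the $E_3$-level, giving $E^2\simeq E_3^{0,2}\oplus E_3^{1,1}\oplus E_3^{2,0}$. To identify $E_3^{1,1}$ with $E_2^{1,1}$, I would examine the two $d_2$'s meeting at $(1,1)$: the incoming $d_2^{-1,2}$ has zero source, and the outgoing $d_2^{1,1}\colon E_2^{1,1}\to E_2^{3,0}$ has zero target since $E_2^{3,0}=0$. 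Formula~(\ref{eq-e3}) then yields $E_3^{1,1}=E_2^{1,1}$, which proves~(\ref{eq-e211}).

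For (2), the first inequality follows from the same degeneration together with the fact that each $E_3^{p,n-p}$ is a subquotient of $E_2^{p,n-p}$: for $0\leq n\leq 4$, formula~(\ref{eq-convergence}) gives $E^n\simeq\bigoplus_{p=0}^n E_3^{p,n-p}$, and for $n\geq 5$ the right-hand side vanishes trivially. For the second inequality, the key observation is that $E_3^{0,2}$ equals the kernel of $d_2^{0,2}\colon E_2^{0,2}\to E_2^{2,1}$, since the possible image coming from $d_2^{-2,3}$ is automatically zero; hence $\dim E_3^{0,2}\geq\dim E_2^{0,2}-\dim E_2^{2,1}$ by rank-nullity, and combining with $\dim E^2\geq\dim E_3^{0,2}$ from (1) gives the bound. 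For (3), the additional hypothesis kills $E_2^{1,0}$ and $E_2^{2,0}$, so the five-term exact sequence~(\ref{eq-exact-seq}) collapses to $0\to E^1\to E_2^{0,1}\to 0$, yielding the desired isomorphism.

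There is no real obstacle here; the only thing to watch is the precise shape of the $d_2$ and $d_3$ arrows and to invoke the support hypothesis systematically to kill sources or targets.
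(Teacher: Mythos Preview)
Your proof is correct and follows essentially the same approach as the paper: you use the support hypothesis to kill the $d_3$-differentials (via the target having $p$-index $\geq 3$), identify $E_3^{1,1}=E_2^{1,1}$ by checking the two adjacent $d_2$'s vanish, bound $E_2^{0,2}$ via $E_3^{0,2}=\ker d_2^{0,2}$, and collapse the five-term exact sequence for (3). The only cosmetic difference is that the paper phrases the $E_2^{0,2}$ bound via a short exact sequence rather than rank-nullity, and notes that for $d_3^{p,q}$ ``either source or target'' vanishes rather than singling out the target.
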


\begin{proof}
  (1) From~(\ref{eq-e3}), we see that $E_3^{p,q}=0$ unless $0\leq
  p,q\leq 2$ since it is a quotient of a subspace of $E_2^{p,q}$. But
  then~(\ref{eq-d3}) shows that, for any $p$, $q$, either the source
  of the target of $d_3^{p,q}$ is zero. In fact, for all $i\geq 3$,
  either the target or the source of $d_i^{p,q}$ vanishes, and
  therefore the spectral sequence degenerates at that level.
\par
By~(\ref{eq-convergence}) we deduce that
$$
E^2\simeq E_3^{0,2}\oplus E_3^{1,1}\oplus E_3^{0,2},
$$
but
$$
E_3^{1,1}=\ker d_2^{1,1}/\Imag d_2^{-1,2},
$$
and since $d_2^{1,1}$ and $d_2^{-1,2}$ are both zero (the target of
the first and the source of the second are zero), we have
$E_3^{1,1}=E_2^{1,1}$, hence~(\ref{eq-e211}).
\par
(2) By (1) and~(\ref{eq-convergence}), we have
$$
E^n\simeq E_3^{n,0}\oplus E_3^{n-1,1}\oplus \cdots\oplus E_3^{0,n}.
$$
\par
Since
$$
\dim E_3^{p,q}\leq \dim E_2^{p,q},
$$
for all $p$ and $q$, by~(\ref{eq-e3}), we obtain
$$
\dim E^n=\sum_{p=0}^n \dim E_3^{p,q}\leq \sum_{p=0}^n
\dim E_2^{p,q}.
$$
\par
Similarly, we note that
$$
E_3^{0,2}=\ker d_2^{0,2}/\Imag d_2^{-2,1}=\ker d_2^{0,2},
$$
and hence we have a short exact sequence
$$
0\lra E_3^{0,2}\lra E_2^{0,2}\fleche{d_2^{0,2}} E_2^{2,1}
$$
which implies that
$$
\dim E_2^{0,2}\leq \dim E_3^{0,2}+ \dim E_2^{2,1}.
$$
\par
From the degeneracy at the $E_3$-level, we then get
$$
\dim E_3^{0,2}\leq \dim E^2,
$$
hence the bound for $E_2^{0,2}$.
\par
(3) The exact sequence~(\ref{eq-exact-seq}), under the assumptions
that $E_2^{p,0}=0$, becomes
$$
0\lra E^1\lra E_2^{0,1}\lra 0,
$$
hence the result.
\end{proof}

The spectral sequences we use are given by the following lemma:

\begin{lemma}\label{lm-leray-seq}
Let $\Fq$ be a finite field of characteristic $p$, $\ell\not=p$ a
prime number. Let $f\in\Fq(X,Y)$ be a rational function, and denote
$$
\sheaf{K}=\sheaf{L}_{\psi(f(X,Y))},
$$
where $\psi$ is a non-trivial additive $\ell$-adic character. Denote
$f^*(X,Y)=f(Y,X)\in \Fq$, and
$$
\sheaf{K}^*=\sheaf{L}_{\psi(f^*)}.
$$
\par
Let
$\sheaf{F}$ be a constructible $\ell$-adic sheaf on $\Aa^1_{\Fq}$.
\par
\emph{(1)} For any dense open subsets $U$, $V$ of $\Aa^1_{\Fq}$,
with $p_1$, $p_2$ denoting the projection maps $U\times V\lra U$ and
$U\times V\lra V$, respectively, there exist converging spectral
sequences
\begin{align*}
  E_2^{i,j}=H^i_c(\bar{U},T^j_{\sheaf{K}}(\sheaf{F}))&\Rightarrow
  H^{i+j}_c(\bar{U}\times\Aa^1,p_2^*\sheaf{F}\otimes\sheaf{K}),\\
  E_2^{i,j}=H^i_c(\bar{V},\sheaf{F}\otimes
  T^j_{\sheaf{K}^*}(\bQl))&\Rightarrow
  H^{i+j}_c(\Aa^1\times\bar{V},p_2^*\sheaf{F}\otimes\sheaf{K})
\end{align*}
of $\bQl$-vector spaces
\par
\emph{(2)} These two spectral sequences satisfy $E_2^{i,j}=0$ unless
$0\leq i\leq 2$ and $1\leq j\leq 2$.
\end{lemma}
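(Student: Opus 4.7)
The plan is to realize both spectral sequences as Leray spectral sequences for one of the two projections. For the first, I would apply the Leray spectral sequence associated to $p_1\colon \bar U\times \Aa^1\to \bar U$ to the sheaf $p_2^*\sheaf F\otimes\sheaf K$ restricted to $\bar U\times \Aa^1$; this gives
$$
H^p_c(\bar U, R^q p_{1,!}(p_2^*\sheaf F\otimes\sheaf K)|_{\bar U})\Rightarrow H^{p+q}_c(\bar U\times \Aa^1, p_2^*\sheaf F\otimes \sheaf K),
$$
and proper base change (Proposition~\ref{pr-etale}, (4)) identifies the restriction of $R^q p_{1,!}(p_2^*\sheaf F\otimes\sheaf K)$ to $\bar U$ with $T^q_{\sheaf K}(\sheaf F)|_{\bar U}$, so the $E_2$-page is indeed $H^p_c(\bar U, T^q_{\sheaf K}(\sheaf F))$.

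For the second spectral sequence, I would use the Leray spectral sequence for $p_2\colon \Aa^1\times\bar V\to \bar V$. The key step is the projection formula
$$
R^q p_{2,!}(p_2^*\sheaf F\otimes\sheaf K)\simeq \sheaf F\otimes R^q p_{2,!}\sheaf K,
$$
which places the input sheaf $\sheaf F$ outside the derived pushforward. Then the involution $\sigma\colon (X,Y)\mapsto (Y,X)$ satisfies $\sigma^*\sheaf K = \sheaf K^*$ and swaps the two projections, so it identifies $R^q p_{2,!}\sheaf K$ with $T^q_{\sheaf K^*}(\bQl)$, completing (1).

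For (2), the bound $0\leq p\leq 2$ follows at once from the cohomological dimension of the curves $\bar U$ and $\bar V$, while $q\leq 2$ follows from the dimension of the fibers of $p_1$ and $p_2$ (Proposition~\ref{pr-etale}, (1)). The delicate point is the vanishing of $E_2^{p,0}$, i.e.\ that $T^0_{\sheaf K}(\sheaf F)|_{\bar U}=0$ (and similarly $\sheaf F\otimes T^0_{\sheaf K^*}(\bQl)|_{\bar V}=0$). By proper base change, the stalk of $T^0_{\sheaf K}(\sheaf F)$ at a geometric point $x$ equals $H^0_c(\Aa^1\times\bFq,\sheaf F\otimes\sheaf L_x)$ for the specialization $\sheaf L_x$ of Definition~\ref{def-spec}, and Lemma~\ref{lm-tensor} shows this vanishes whenever $\sheaf F$ and $\sheaf L_x$ are both middle-extension sheaves. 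Since $\sheaf F$ is a middle-extension by the standing hypothesis on the input sheaf, and $\sheaf L_x$ is a middle-extension as soon as the specialization $f_x$ is a non-constant rational function, this holds on the dense open locus of $\Aa^1$ avoiding the (finitely many) values of $x$ where $f_x$ is constant. The same argument with the roles of $X$ and $Y$ swapped handles the vanishing for the second spectral sequence.

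The main obstacle is the last point: one must justify that the choice of $U$ (and $V$) can be taken so that this vanishing is literal on $\bar U$ (resp.\ $\bar V$), either by shrinking $U$ to the locus where $f_x$ is non-constant, or by absorbing the finitely many exceptional points into an excision argument that does not affect the spectral sequence formalism. Once this is done, the rest is a routine application of Leray, the projection formula, and the cohomological-dimension vanishing from Proposition~\ref{pr-etale}.
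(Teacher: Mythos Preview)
Your overall strategy matches the paper's exactly: Leray for $p_1$ gives the first spectral sequence, Leray for $p_2$ plus the projection formula and the swap $\sigma$ gives the second, and cohomological dimension handles the bounds on $p$ and $q$.

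The place where you diverge from the paper, and where you introduce an unnecessary complication, is the $q=0$ vanishing. You worry that $\sheaf{L}_x$ may fail to be a middle-extension (and phrase this as ``$f_x$ constant''), and propose to shrink $U$ to avoid the bad points. But the lemma is stated for \emph{arbitrary} dense open $U$, $V$, so shrinking is not permitted; and in fact no shrinking is needed, because $T^0_{\sheaf{K}}(\sheaf{F})$ vanishes at \emph{every} stalk. By Lemma~\ref{lm-as2}, the specialization $\sheaf{L}_x$ is either $0$ or of the form $j_!(\text{lisse rank }1)$ on an open $U_x\subset\Aa^1$. Hence $\sheaf{F}\otimes\sheaf{L}_x\simeq j_!(\sheaf{F}|_{U_x}\otimes(\text{lisse rank }1))$, and since $\sheaf{F}$ is a middle-extension it has no punctual sections on $U_x$; tensoring with a lisse rank-$1$ sheaf preserves this, so $H^0_c(\Aa^1\times\bFq,\sheaf{F}\otimes\sheaf{L}_x)=H^0_c(U_x,\ldots)=0$. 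The paper simply invokes Lemma~\ref{lm-tensor} here (writing $\sheaf{L}_{\psi(f(x,Y))}$ in place of $\sheaf{L}_x$, which is a mild imprecision but the conclusion is correct for the reason above). For the second spectral sequence the argument is even simpler: the stalk of $T^0_{\sheaf{K}^*}(\bQl)$ at $y$ is $H^0_c(\Aa^1\times\bFq,\sheaf{L}^*_y)$, and $\sheaf{L}^*_y$ itself has no punctual part.

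So your ``main obstacle'' is not an obstacle at all, and your diagnosis of when it arises (``$f_x$ constant'') is not the right criterion anyway. Once you observe the vanishing is pointwise, the proof is complete with no further work.
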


\begin{proof}
  (1) The first spectral sequence is the Leray spectral sequence of
  the first projection map $p_1\,:\, U\times\Aa^1\ra U$ and of the
  sheaf $p_2^*\sheaf{F}\otimes\sheaf{K}$ (see, e.g.,~\cite[Th. 7.4.4
  (ii)]{leifu} or~\cite[Th. VI.3.2 (c)]{milne}.)
\par
The second spectral sequence arises from the Leray spectral sequence
of the second projection $p_2\,:\, \Aa^1\times V\ra V$ and of the
sheaf $p_2^*\sheaf{F}\otimes\sheaf{K}$, namely
$$
E_2^{i,j}=H^i_c(\bar{V},R^jp_{2,!}(p_2^*\sheaf{F}\otimes\sheaf{K}))
\Rightarrow
H^{i+j}_c(\Aa^1\times \bar{V},p_2^*\sheaf{F}\otimes\sheaf{K})
$$
together with the facts that
$$
R^jp_{2,!}(p_2^*\sheaf{F}\otimes\sheaf{K})=\sheaf{F}\otimes 
R^jp_{2,!}(\sheaf{L}_{\psi(f(X,Y))})
$$
by the projection formula (see, e.g.,~\cite[Th. 7.4.7]{leifu}), and
that we can identify $R^jp_{2,!}(\sheaf{L}_{\psi(f(X,Y))})$ with
$T^j_{\sheaf{K}^*}(\bQl)$ (restricted to $V$).
\par
(2) The fact that $E_2^{i,j}=0$ unless $0\leq i,j\leq 2$ is immediate
from (1) and from the vanishing of cohomology of curves (resp. of
higher-direct image sheaves for maps with curves as fibers) in
Proposition~\ref{pr-etale}, (1): the former constrains $i$ to be
between $0$ and $2$, and the second constrains similarly $j$.
\par
For the vanishing when $j=0$, we note that the stalk at $x\in
\Aa^1(\bFq)$ of $R^0p_{1,!}(p_2^*\sheaf{F}\otimes\sheaf{K})$ is, by
the proper base change theorem, equal to
$$
H^0_c(\Aa^1\times\bFq,\sheaf{F}\otimes\sheaf{L}_{\psi(f(x,Y))})=0 
$$
by Lemma~\ref{lm-tensor}. Similarly, the stalk of
$R^0p_{2,!}(\sheaf{K})$ at $y$ is
$$
H^0_c(\Aa^1\times\bFq,\sheaf{L}_{\psi(f(X,y))})=0,
$$ 
and these facts show that $E_2^{i,0}=0$ for all $i$ in both spectral
sequences.
\end{proof}

\section{Beginning of the proof}\label{sec-begin}

We will now begin the proof of Proposition~\ref{pr-steps}. As a
warm-up, the reader may wish to have a look at Section
\ref{sec-polymath} where we discuss the simpler case of the Fourier
transform (where $f(X,Y)=XY$) and a closely related case appearing in
the \textsc{Polymath8} project.
\par
We first deal with parts \eqref{un} and \eqref{de} of
Proposition~\ref{pr-steps}.
\par
(1) We claim that $T^0_{\sheaf{K}}(\sheaf{F})=0$ for all $f$ and
$\sheaf{F}$. Indeed, by the proper base change theorem
(Proposition~\ref{pr-etale}, (4)), the stalk of
$R^0p_{1,!}(p_2^*\sheaf{F}\otimes\sheaf{K})$ over $x\in\Aa^1(\bFq)$ is
$$
H^0_c(\Aa^1\times\bFq,\sheaf{F}\otimes\sheaf{L}_{\psi(f(x,Y))})=0
$$
by Lemma~\ref{lm-tensor}.
\par
(2) By the bounds of Bombieri, Adolphson-Sperber and Katz (see,
e.g.,~\cite[Th. 12]{katz-betti}), the sum of Betti numbers
$$
\sum_{i=0}^4\dim H^i_c(\Aa^2\times\bFq,\sheaf{L}_{\psi(f)})
$$
is bounded by $(1+\cond(f))^B$ for some absolute constant $B\geq 1$,
which proves the continuity of $h^i(f,\bQl)$. Precisely, in order to
apply the result of Katz, one writes $f=f_1/f_2$ with $f_i\in\Fq[X,Y]$
and $f_1$ coprime to $f_2$, then one notes that if $U_2\subset \Aa^2$
is the open subset where the denominator $f_2$ is invertible, we have
$$
H^i_c(\Aa^2\times\bFq,\sheaf{L}_{\psi(f)})
=H^i_c(U_2\times\bFq,\sheaf{L}_{\psi(f)}) 
$$
by definition of cohomology with compact support. Define $Z\subset
\Aa^3$, where $\Aa^3$ has coordinates $(U,X,Y)$, to be the zero set of
the polynomial $Uf_2(X,Y)-1$. Then the morphism
$$
\alpha
\begin{cases}
  Z\lra U_2\\
  (u,x,y)\mapsto (x,y)
\end{cases}
$$
is an isomorphism such that $\alpha^*\sheaf{L}_{\psi(f)}$ is
isomorphic to the lisse sheaf $\sheaf{L}_\psi(\tilde{f})$ for the
polynomial $\tilde{f}=Uf_1(X,Y)\in \Fq[U,X,Y]$. Katz's theorem gives
precisely the upper-bound
$$
\sum_{i=0}^4\dim H^i_c(\bar{Z},\sheaf{L}_{\psi(\tilde{f})}) \leq
3\Bigl(1+1+\max(1+\deg f_1,1+\deg f_2)\Bigr)^{3+1},
$$
and hence the result.
\par
The other parts of the proof are more involved, and require the tools
of Section~\ref{sec-spectral}.  However, before going further we will
deal directly with the special case when $f\in\Fq(X)+\Fq(Y)$ (the
reader is invited to figure out the analogue of
Section~\ref{sec-motivation} in this case).
\subsection{Proof of Theorems~\ref{th-conductor} and~\ref{th-betti} in a factorable case}\label{sec-factorable}

So assume that
$$
f=f_1+f_2,
$$
with $f_1\in\Fq(X)$ and $f_2\in\Fq(Y)$. We have
$\sheaf{K}=p_1^*\sheaf{L}_1\otimes p_2^*\sheaf{L}_2$,
where $\sheaf{L}_i=\sheaf{L}_{\psi(f_i)}$, hence
$$
R^ip_{1,!}(p_2^*\sheaf{F}\otimes p_2^*\sheaf{L}_2 \otimes
p_1^*\sheaf{L}_1)\simeq \sheaf{L}_1\otimes
R^ip_{1,!}(p_2^*(\sheaf{F}\otimes\sheaf{L}_2)),
$$
for $0\leq i\leq 2$, by the projection formula (see,
e.g.,~\cite[Th. 7.4.7]{leifu}).
\par
But the sheaf $R^ip_{1,!}(p_2^*(\sheaf{F}\otimes\sheaf{L}_2))$ is the
constant sheaf associated to
$H^i_c(\Aa^1\times\bFq,\sheaf{F}\otimes\sheaf{L}_2)$: indeed,
applying~\cite[Arcata, IV, Th. 5.4]{deligne} to the cartesian diagram
$$
\begin{array}{ccc}
\Aa^1 & \stackrel{p_2}{\longleftarrow} & \Aa^2\\
s_2\downarrow & & \downarrow p_1\\
\spec\Fq & \stackrel{s_1}{\longleftarrow} & \Aa^1
\end{array}
$$
and the sheaf $\sheaf{F}\otimes\sheaf{L}_2$ on $\Aa^1$, we obtain
$$
s_1^*R^is_{2,!}(\sheaf{F}\otimes\sheaf{L}_2)\simeq
R^ip_{1,!}(p_2^*(\sheaf{F}\otimes\sheaf{L}_2)),
$$
and the left-hand side is a constant sheaf (since it is pulled-back
from $\Fq$) and has fiber $R^is_{2,!}(\sheaf{F}\otimes\sheaf{L}_2)=
H^i_c(\Aa^1\times\bFq,\sheaf{F}\otimes\sheaf{L}_2)$, by the definition
of cohomology with compact support and higher-direct images.
\par
Hence we have (see~(\ref{eq-subadd}))
$$
c_i(f_1+f_2,\sheaf{F})\leq (\dim
H^i_c(\Aa^1\times\bFq,\sheaf{F}\otimes\sheaf{L}_2))
\times \cond(\sheaf{L}_1),
$$
which is continuous as a function of $\cond(\sheaf{F})$ and $\cond(f)$
by Lemmas~\ref{lm-cont-hic} and~\ref{lm-tensor-cond}.
\par
This proves Theorem~\ref{th-conductor} in the special case
$f\in \Fq(X)+\Fq(Y)$, and Theorem~\ref{th-betti} follows either from
the argument in Section~\ref{sec-7} (which is general) or from an
application of the K\"unneth formula (see, e.g.,~\cite[Sommes Trig.,
(2.4)*]{deligne}) and of Lemma~\ref{lm-cont-hic}.

\begin{remark}\label{rem-factor}
  In particular, by the definition, of the conductor, we have
  established Proposition in the case $f\in\Fq(X)+\Fq(Y).$ In the
  sequel, we may (and will) assume that
  $$
  f\not\in\Fq(X)+\Fq(Y).
  $$
\end{remark}

\section{Proposition~\ref{pr-steps}: proof of \eqref{tr} and \eqref{trb}}

We prove \eqref{tr} and assume that the function $(f,\sheaf{F})\mapsto h^2(f,\sheaf{F})$ is
continuous, and our aim is to show that
$$(f,\sheaf{F})\mapsto c_2(f,\sheaf{F})=\cond(T^2_{\sheaf{K}}(\sheaf{F}))$$ is continuous. As pointed out in Remark \ref{rem-factor}, we can assume from now on that
$$f\notin \Fq(X)+\Fq(Y).$$
\begin{lemma}\label{lm-vanish}
  Assume $f\notin \Fq(X)+\Fq(Y)$. Then $T^2_{\sheaf{K}}(\sheaf{F})$
  vanishes generically.
\end{lemma}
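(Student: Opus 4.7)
The plan is to check vanishing of the generic stalk of $T^2_{\sheaf{K}}(\sheaf{F})$, which is equivalent to the asserted generic vanishing. By the proper base change theorem (Proposition~\ref{pr-etale}, (4)), this stalk equals
$$
H^2_c(\Aa^1 \times \overline{\bFq(X)},\ \sheaf{F}|_{\overline{\bFq(X)}} \otimes \sheaf{L}_{\psi(f(X,Y))}|_{\Aa^1_{\overline{\bFq(X)}}}),
$$
where we pull back to the algebraic closure of the function field $k(\eta) = \bFq(X)$ of $\Aa^1$. By excision (any extra singular points contribute only to $H^0_c$ and $H^1_c$), this coincides with the $H^2_c$ of the lisse restriction, which in turn is the space of coinvariants of the geometric generic fiber under the geometric fundamental group. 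Hence the stalk is nonzero if and only if some geometric Jordan-H\"older factor of $\sheaf{F}$ is geometrically isomorphic to $\sheaf{L}_{\psi(-f(X,Y))}$ as sheaves on $\Aa^1_{\overline{\bFq(X)}}$.

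Arguing by contradiction, let $\sheaf{F}_0$ be such a factor. Then $\sheaf{F}_0$ has generic rank $1$ and must be purely wildly ramified of rank one over $\bFq$, so $\sheaf{F}_0 \simeq \sheaf{L}_{\psi(g(Y))}$ geometrically for some $g \in \bFq(Y)$. The geometric isomorphism
$$
\sheaf{L}_{\psi(g(Y))} \simeq \sheaf{L}_{\psi(-f(X, Y))}
$$
on $\Aa^1_{\overline{\bFq(X)}}$ is equivalent to the geometric triviality of $\sheaf{L}_{\psi(g(Y) + f(X,Y))}$, which by the standard additive Hilbert~90 description of geometrically trivial Artin-Schreier sheaves yields
$$
g(Y) + f(X, Y) = \phi(X, Y)^p - \phi(X, Y) + c(X)
$$
for some $\phi \in \overline{\bFq(X)}(Y)$ and $c \in \overline{\bFq(X)}$.

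A Galois descent then brings $\phi$ into $\bFq(X, Y)$ and $c$ into $\bFq(X)$: applying any $\sigma \in \Gal(\overline{\bFq(X)}/\bFq(X))$ to the displayed equation and subtracting gives $(\phi - \sigma\phi)^p - (\phi - \sigma\phi) = \sigma c - c \in \overline{\bFq(X)}$, forcing $\phi - \sigma\phi$ to be constant in $Y$; absorbing this constant into $c$, one may take $\phi$ Galois-invariant, hence in $\bFq(X, Y)$, and then $c = f + g - (\phi^p - \phi) \in \bFq(X)$ follows by evaluating at any specific $Y$ that avoids the poles. Thus $f = (c(X) - g(Y)) + (\phi^p - \phi)$, so $\sheaf{L}_{\psi(f)}$ is geometrically isomorphic on $\Aa^2$ to $\sheaf{L}_{\psi(c(X) - g(Y))}$. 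This reduces the analysis of $T^i_{\sheaf{K}}(\sheaf{F})$ to the split case $f \in \Fq(X) + \Fq(Y)$ already treated at the end of Section~\ref{sec-setup}, contradicting the hypothesis (understood modulo Artin-Schreier coboundaries, which are invisible to the geometric kernel).

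The main obstacle will be the Galois descent step, which is elementary but requires some care with Artin-Schreier cohomology over a non-algebraically-closed base; the remaining ingredients (proper base change, the coinvariants formula for $H^2_c$, and the description of geometrically trivial Artin-Schreier sheaves) are standard.
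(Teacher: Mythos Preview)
Your approach is the paper's: proper base change to compute the geometric generic stalk, the coinvariant formula for $H^2_c$ on a curve, and the classification of Artin--Schreier sheaves to force a Jordan--H\"older factor of $\sheaf{F}$ to be $\sheaf{L}_{\psi(g)}$ with $g(Y)+f(X,Y)$ an Artin--Schreier coboundary. Your Galois-descent paragraph spells out a step the paper compresses to one sentence.

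The remaining gap is at the very end, and it is shared with the paper's own proof. You conclude only $f=(c(X)-g(Y))+(\phi^p-\phi)$ with $\phi\in\bFq(X,Y)$ possibly nonconstant in $Y$; this does not contradict $f\notin\Fq(X)+\Fq(Y)$, and your parenthetical changes the hypothesis rather than proving it. The paper simply asserts $f=f_1(X)+f_2(Y)$ at the corresponding point. In fact the lemma is false as literally stated: for $f=(XY)^p-XY$ the sheaf $\sheaf{L}_{\psi(f)}$ is geometrically trivial, so $T^2_{\sheaf{K}}(\bQl)$ is generically $\bQl(-1)\neq 0$, yet $\partial^2 f/\partial X\partial Y=-1$ shows $f\notin\Fq(X)+\Fq(Y)$. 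The intended fix is the standing hypothesis $\cond(f)<p$ of Theorem~\ref{th-conductor}: then every pole of $f_X$ in $Y$ (including at $\infty$) has order $<p$, and after putting $g$ in Artin--Schreier reduced form the same holds for $g$ (its Swan conductors match those of $\sheaf{L}_{\psi(-f_X)}$); so $f_X+g-c$ has all $Y$-pole orders $<p$, while a $\phi$ nonconstant in $Y$ would give $\phi^p-\phi$ a pole of order divisible by $p$. This forces $\phi\in\bFq(X)$, hence $f\in\bFq(X)+\bFq(Y)$, and an elementary specialization at $\Fq$-rational points descends this to $\Fq(X)+\Fq(Y)$.
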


\begin{proof}
Denote
$$
\sheaf{G}=T^2_{\sheaf{K}}(\sheaf{F})= R^2p_{1,!}(p_2^*\sheaf{F}\otimes
\sheaf{K}).
$$
\par
Let $(\sheaf{F}_i)$ be the (geometric) Jordan-H\"older factors of
$\sheaf{F}$; then the geometric Jordan-H\"older factors of
$p_2^*\sheaf{F}\otimes \sheaf{K}$ are the
$p_2^*\sheaf{F}_i\otimes\sheaf{K}$, so that we may assume that
$\sheaf{F}$ is geometrically irreducible.
\par
Let $\eta=\spec(\Fq(X))$ be the generic point of the affine line
$\Aa^1_{\Fq}$ (with coordinate $X$), let
$\bar{\eta}=\spec(\overline{\Fq(X)})$ be a geometric point above
$\eta$. By constructibility, the stalks of $\sheaf{G}$ vanish for all
$x$ in a dense open subset if and only if the stalk
$\sheaf{G}_{\bar{\eta}}$ is zero.
\par
By the proper base change theorem, we have
$$
\sheaf{G}_{\bar{\eta}}=H^2_c(\Aa^1\times\overline{\Fq(X)},
\sheaf{F}\otimes \sheaf{L}_{\psi(f_X(Y))}),
$$
where $f_X(Y)=f(X,Y)$. Assume this stalk is non-zero. Then, using the
coinvariant formula for the second cohomology group on a curve, it
follows that there exists an open subset $U$ of the affine line (with
coordinate $Y$) over $\overline{\Fq(X)}$ such that
$$
\sheaf{F}\simeq \sheaf{L}_{\psi(-f_X(Y))}
$$
as sheaves on $U\times \overline{\Ff_{q}(X)}$. Since they are
middle-extension sheaves, they are isomorphic as sheaves on the affine
line over $\overline{\Fq(X)}$.
\par
Note that $\sheaf{F}$ is pulled back from the affine line $\Aa^1$ over
$\Fq$ (still with coordinate $Y$), and so the classification of
Artin-Schreier sheaves shows that $f$ is, up to an additive
``constant'' in $\overline{\Fq(X)}$, an element in $\Fq(Y)$, i.e.,
we have
$$
f=f_1+f_2,
$$
with $f_1\in\Fq(X)$ and $f_2\in\Fq(Y)$.
\end{proof}

\begin{remark}
  One can also prove this lemma using more elementary arguments on
  rational functions, by looking at the vanishing at individual stalks
  and the classification of Artin-Schreier sheaves on $\Aa^1_{\Fq}$.
\end{remark}

Because of this lemma, the conductor of
$\sheaf{G}=T^2_{\sheaf{K}}(\sheaf{F})$ is equal to $\pct(\sheaf{G})$
(the generic rank is $0$, and thus the action of all inertia groups on
the generic fiber is trivial, which implies that $n(\sheaf{G})=0$ and
hence the Swan conductors also vanish.) Hence
$$
\cond(\sheaf{G})=\dim
H^0_c(\Aa^1\times\bFq,T^2_{\sheaf{K}}(\sheaf{F})). 
$$
\par
In the first spectral sequence of Lemma~\ref{lm-leray-seq}, with
$U=\Aa^1$, we must therefore bound $\dim E_2^{0,2}$. By the last part
of Proposition~\ref{pr-prop-seq} (2), we have
\begin{multline}\label{eq-ineq}
  \dim E_2^{0,2}\leq \dim E^2+\dim E_2^{2,1}= \dim
  H^2_c(\Aa^2\times\bFq,p_2^*\sheaf{F}\otimes \sheaf{K})+\\
  \dim H^2_c(\Aa^1\times\bFq,T^1_{\sheaf{K}}(\sheaf{F}))
  =h^2(f,\sheaf{F})+ \dim
  H^2_c(\Aa^1\times\bFq,T^1_{\sheaf{K}}(\sheaf{F})).
\end{multline}
\par
We have already recalled in the proof of Lemma~\ref{lm-cont-hic} that
$$
\dim H^2_c(\Aa^1\times\bFq,T^1_{\sheaf{K}}(\sheaf{F}))\leq
\rank(T^1_{\sheaf{K}}(\sheaf{F})).
$$
\par
Using the notation of Definition~\ref{def-spec}
and the proper base change theorem, we get
$$
\dim H^2_c(\Aa^1\times\bFq,T^1_{\sheaf{K}}(\sheaf{F})) \leq
\max_{x\in\Aa^1(\bFq)} \dim T^1_{\sheaf{K}}(\sheaf{F})_x \leq
\max_{x\in\bFq} \dim
H^1_c(\Aa^1\times\bFq,\sheaf{F}\otimes\sheaf{L}_x),
$$
and by Corollary~\ref{cor-fiber-dim}, this shows that
$(f,\sheaf{F})\mapsto \dim
H^2_c(\Aa^1\times\bFq,T^1_{\sheaf{K}}(\sheaf{F}))$ is continuous. The
inequality~(\ref{eq-ineq}) then finishes the proof of \eqref{tr}.
\par
The proof of \eqref{trb} is identical: it suffices to fix $\mcF=\bQl$
in the above argument.

\section{Proposition~\ref{pr-steps}: proof of \eqref{qu}}

We assume that
the function $f\mapsto c_i(f,\bQl)$ are continuous for $i=1$ and
$i=2$, and aim at proving that $(f,\mcF)\mapsto h^2(f,\sheaf{F})$ is continuous.  
\par
We apply the second spectral sequence of Lemma~\ref{lm-leray-seq},
with $V=\Aa^1$, and the first part of Proposition~\ref{pr-prop-seq}
(2) with $n=2$: this gives
$$
h^2(f,\sheaf{F})= \dim E^2\leq \dim E_2^{2,0}+\dim E_2^{1,1}+\dim
E_2^{0,2},
$$
where
$$
E_2^{i,j}=H^i_c(\Aa^1\times\bFq,\sheaf{F}\otimes T^j_{\sheaf{K}^*}
(\bQl)).
$$
\par
We note that $\cond(\sheaf{K}^*)=\cond(\sheaf{K})$. We have
$E_2^{2,0}=0$ (cf. (1) of Section \ref{sec-begin}), and
$$
\dim E_2^{1,1}=\dim H^1_c(\Aa^1\times\bFq,\sheaf{F}\otimes
T^1_{\sheaf{K}^*}(\bar{\Qq}_{\ell}))
$$
is continuous by Lemma~\ref{lm-cont-hic} and~\ref{lm-tensor-cond},
since the conductor of $T^1_{\sheaf{K}^*}(\bQl)$ is bounded
polynomially in terms of the conductor of $f$ by assumption.
\par
Finally, we have
$$
\dim E_2^{0,2}=\dim H^0_c(\Aa^1\times\bFq,\sheaf{F}\otimes
T^2_{\sheaf{K}^*}(\bQl))\leq \cond(\sheaf{F}\otimes
T^2_{\sheaf{K}^*}(\bQl)).
$$
\par
By assumption, $f\mapsto c_2(f^*,\bQl)$ is continuous, and therefore
the function $\dim E_2^{0,2}$ is continuous
(Lemma~\ref{lm-tensor-cond}). Thus $h^2(f,\sheaf{F})$ is also
continuous.

\section{Proposition~\ref{pr-steps}: proof of \eqref{ci} and \eqref{cib}}

We assume that the function $(f,\sheaf{F})\mapsto c_2(f,\sheaf{F})$ is
continuous, and  aim at proving that $(f,\mcF)\mapsto m(f,\sheaf{F})$ is continuous. We still assume that
$f\notin \Fq(X)+\Fq(Y)$. 
\par
We set
$$
\sheaf{G}=T^1_{\sheaf{K}}(\sheaf{F})=
R^1p_{1,!}(p_2^*\sheaf{F}\otimes\sheaf{K}). 
$$
We have to bound the rank $\rank(\mcG)$ and the punctual part $\pct(\mcG)$.
\subsection{Bounding $\rank(\mcG)$}
For $x\in\Aa^1(\bFq)$, the stalk of $\sheaf{G}$ at $x$ is 
$$
\sheaf{G}_x=H^1_c(\Aa^1\times\bFq,\sheaf{F}\otimes\sheaf{L}_x)
$$
by the proper base change theorem. 
The generic rank of $\sheaf{G}$ is at most the maximal value of the
dimension of this stalk as $x$ varies. Hence, by Corollary~\ref{cor-fiber-dim}, it
is a continuous function of $(f,\sheaf{F})$.
\subsection{Bounding $\pct(\mcG)$}
We have
$$
\pct(\sheaf{G})=\dim H^0_c(\Aa^1\times\bFq,\sheaf{G}).
$$
\par
Again by (1) of Section \ref{sec-begin}, we have $T^0_{\mcK}(\mcF)=0$, therefore in the first spectral
sequence of Lemma~\ref{lm-leray-seq} (with $U=\Aa^1$) we have $E_2^{\ \!\! p,0}=0$ so that applying Proposition~\ref{pr-prop-seq} (3), we obtain 
$$
\pct(\sheaf{G})=\dim
H^1_c(\Aa^2\times\bFq,p_2^*\sheaf{F}\otimes\sheaf{K}).
$$

To bound this last quantity, we need the following cohomological lemma: 

\begin{lemma}\label{lm-ugly-1}
Let $\Fq$ be a finite field of characteristic $p$, $\ell\not=p$ a
prime number and $\psi$ a non-trivial $\ell$-adic additive character. Let $f=g_1/g_2\in\Fq(X,Y)$ be a rational function with $g_1,g_2\in\Fq[X,Y]$ coprime and 
$$
\sheaf{K}=\sheaf{L}_{\psi(f)}.
$$
Let
$\sheaf{F}$ be a constructible $\ell$-adic sheaf on $\Aa^1_{\Fq}$.

\par
Let $C$ be the union of the zero set of $g_2$, seen as a
reduced subscheme of $\Aa^2$, and of the lines
$$
\Aa^1\times\{y\}\subset \Aa^2,
$$
where $y\in\Aa^1(\bFq)$ is a singularity of $\sheaf{F}$. Let $W\subset
\Aa^2$ be the open subset complement of $C$. 
\begin{enumerate}
\item \label{ugly1} We have
$$
H^1_c({W}\times\bFq,p_2^*\sheaf{F}\otimes\sheaf{K})=0.
$$
\item \label{ugly2} The map
$$
(f,\sheaf{F})\mapsto \dim
H^1_c({C}\times\bFq,p_2^*\sheaf{F}\otimes\sheaf{K})
$$
is continuous. 
 	
\end{enumerate}
 
\end{lemma}
We give the proof of this lemma below, but let us explain first how to conclude the proof of \eqref{ci}.
\par 
Let $W\subset \Aa^2\times\Fq$ be the (dense) open set defined
in Lemma~\ref{lm-ugly-1} and $C=\Aa^2-W$ its complement. By
\eqref{ugly1} of this Lemma, we have 
$$H^1_c({W}\times\bFq,p_2^*\sheaf{F}\otimes\sheaf{K})=0$$
and from the excision inequality~(\ref{eq-sigma}), we get
$$
\pct(\sheaf{G})= \dim
H^1_c(\Aa^2\times\bFq,p_2^*\sheaf{F}\otimes\sheaf{K}) \leq \dim
H^1_c({C}\times\bFq,p_2^*\sheaf{F}\otimes\sheaf{K}),
$$
and the second part of Lemma~\ref{lm-ugly-1} shows that
$(f,\sheaf{F})\mapsto \pct(\sheaf{G})$ is continuous.

The proof of \eqref{cib} is identical: it suffices to fix $\mcF=\bQl$
in the above argument.

\begin{proof}[Proof of Lemma \ref{lm-ugly-1}]
  (1) The open subset $W$ is a smooth affine surface, and
  $p_2^*\sheaf{F}\otimes\sheaf{K}$ is lisse on $W$,
  so~(\ref{eq-affine-vanishing}) gives the vanishing of the first
  cohomology group.
\par
(2) Write $C_1$ for the zero set of $g_2$ (as a reduced scheme) and 
$$
C_2=\bigcup_{y\in \tilde{S}}\ \Aa^1\times\{y\},
$$
where $y$ ranges over those singularities of $\sheaf{F}$ in
$\Aa^1(\bFq)$ such that $\Aa^1\times \{y\}$ is \emph{not} contained in
$C_1$.
\par
Let $S=C_1\cap C_2$ be the intersection of these two sets; because of
the last restriction, this is a finite set, and its order is bounded
polynomially in terms of $\cond(\sheaf{F})$ and $\cond(f)$ (e.g., by
Bezout's Theorem for plane curves). Applying the excision exact
sequence~(\ref{eq-excision}) to $C$ and the complement $U$ (in $C$) of
the closed set $C_1$, we get by~(\ref{eq-sigma}) the bound
$$
\dim H^1_c(\bar{C},p_2^*\sheaf{F}\otimes\sheaf{K}) \leq
\dim H^1_c(\bar{U},p_2^*\sheaf{F}\otimes\sheaf{K})+
\dim H^1_c(\bar{C}_1,p_2^*\sheaf{F}\otimes\sheaf{K})=
\dim H^1_c(\bar{U},p_2^*\sheaf{F}\otimes\sheaf{K})
$$
since $\sheaf{K}$, by definition, is zero on $C_1$. 
\par
We have $U=C_2-S$, and we apply again the excision exact sequence to
$C_2$ and its open set $U$, obtaining by~(\ref{eq-sigma1}) the bound
$$
  \dim H^1_c(\bar{C},p_2^*\sheaf{F}\otimes\sheaf{K})\leq \dim
  H^1_c(\bar{U},p_2^*\sheaf{F}\otimes\sheaf{K})
  \leq \dim H^1_c(\bar{C}_2,p_2^*\sheaf{F}\otimes\sheaf{K})
$$
(because $H^0_c(\bar{S},p_2^*\sheaf{F}\otimes\sheaf{K})=0$ since
$S\subset C_1$, so that $\sheaf{K}$ is zero on $S$). Finally, we have
$$
H^1_c(\bar{C}_2,p_2^*\sheaf{F}\otimes\sheaf{K})=
\bigoplus_{y\in\tilde{S}} \sheaf{F}_y\otimes
H^1_c(\Aa^1\times\bFq,\sheaf{L}_{\psi(f(x,Y))}),
$$
and moreover both $\tilde{S}$ and $S$ have order bounded in terms of
$\cond(\sheaf{F})$ and $\cond(f)$, so that we obtain the result.
\end{proof}

\section{Proposition~\ref{pr-steps}: proof of \eqref{si} and \eqref{sib}}

We
assume that the functions $(f,\sheaf{F})\mapsto m(f,\sheaf{F})$ and
$(f,\sheaf{F})\mapsto h^2(f,\sheaf{F})$ are continuous, and aim at proving that
 $$(f,\sheaf{F})\mapsto c_1(f,\sheaf{F})$$ is continuous. We recall that
$$
\sheaf{G}=T^1_{\sheaf{K}}(\sheaf{F})=
R^1p_{1,!}(p_2^*\sheaf{F}\otimes\sheaf{K}). 
$$

\par
We have by definition
$$\cond(\mcG)=\cond(\mcG_0)+\pct(\mcG)$$
where $\mcG_0$ denote the middle-extension part of $\mcG$.  From the
assumption that $m(f,\sheaf{F})$ is continuous, the punctual part
$\pct(\mcG)$ and the $\rank(\mcG_0)$ are continuous; by
Lemma~\ref{lm-cd-bound} (applied to $\mcG_0$), it is enough to prove
that
$$
(f,\mcF)\mapsto \dim H^1_c(\Aa^1\times\bFq,T^1_{\sheaf{K}}(\sheaf{F}))
$$
is continuous.
\par
We use the first spectral sequence of Lemma~\ref{lm-leray-seq} (with
the open set $\Aa^1$).  By Proposition~\ref{pr-prop-seq} (1)
and~(\ref{eq-convergence}), we have
$$
E^2\simeq E_3^{0,2}\oplus E_2^{1,1}\oplus E_3^{2,0}
$$
and in particular
$$
\dim H^1_c(\Aa^1\times\bFq,T^1_{\sheaf{K}}(\sheaf{F}))= \dim E_2^{1,1}\leq
\dim E^2=\dim H^2_c(\Aa^2\times\bFq,p_2^*\sheaf{F}\otimes
\sheaf{K}).
$$
Since $h^2(f,\sheaf{F})$ is also
assumed to be continuous, this proves \eqref{si}. 

The proof of \eqref{sib} is identical: it suffices to fix $\mcF=\bQl$
in the above argument.

\section{Proposition~\ref{pr-steps}: proof of \eqref{se}}\label{sec-7}

We assume
that the functions $(f,\sheaf{F})\mapsto c_i(f,\sheaf{F})$ are
continuous for $0\leq i\leq 2$, and aim at proving
thaat $$(f,\sheaf{F})\mapsto h^j(f,\sheaf{F})$$ is continuous for
$j\leq 4$. 
\par
We use the first spectral sequence of Lemma~\ref{lm-leray-seq} with
$U=\Aa^1$. For any $j$, it implies that
$$
h^j(f,\sheaf{F})=\dim E^j\leq \sum_{p=0}^j
\dim H^p_c(\Aa^1\times\bFq,T^{j-p}_{\sheaf{K}}(\sheaf{F})).
$$
\par
By Lemma~\ref{lm-cont-hic}, and the continuity of
$c_{j-p}(f,\sheaf{F})$, each term in the sum is a continuous function,
and hence so is $h^j(f,\sheaf{F})$.

\section{Two special examples}\label{sec-polymath}

This section is largely independent of the full proof of
Proposition~\ref{pr-steps}. We establish Theorem~\ref{th-conductor} in
the special but fundamental case of the Fourier transform, and in a
related case which arose during the discussions related to the
\textsc{Polymath8} project \cite{polymath8ANT}.
\par
The complications which account for the length of the proof
Proposition~\ref{pr-steps}, compared with the case of the Fourier
transform, are that the cohomology of the specializations
$\sheaf{L}_{\psi(f(x,Y))}$ are not as simple as that of
$\sheaf{L}_{\psi(xY)}$ (for instance, it is not the case in general
that $\pct(\sheaf{L}_{\psi(f(x,Y))})=0$, as happens in the case of the
Fourier transform, see below).

\begin{remark} We will not strictly keep track of the
fact that the conductor bounds for the Fourier transform are of
polynomial size in terms of $\cond(\sheaf{F})$, but this is easily checked to follow from the argument.
\end{remark}
\subsection{The Fourier transform}
We consider the case 
$$
f(X,Y)=XY\in\Fq[X,Y],
$$
and we write $\ft_{\psi}(\sheaf{F})$ for the corresponding transform
$$
\ft_{\psi}(\sheaf{F})=R^1p_{1,!}(p_2^*\sheaf{F}\otimes\sheaf{L}_{\psi(XY)})(1/2),
$$
which is up to the Tate twist the ``naive'' Fourier transform of~\cite[Chap. 8]{katz-gkm}. Note that $\cond(f)=2$, independently
of $q$. We will not need, however to restrict to primes $p>2$.
\par
Let $\sheaf{F}$ be a middle-extension sheaf and $$\sheaf{G}=\ft_{\psi}(\sheaf{F}).$$ 
By Lemma~\ref{lm-cd-bound} it suffice to show that  $\rank(\mcG)$, $\pct(\mcG)$ and $h_1(\mcG)$ are bounded in terms of $\cond(\mcF)$.

 We start with the rank: by the proper base change
theorem, the fiber of $\mcG$ at $x\in\Aa^1(\bFq)$ is
$$
H^1_c(\Aa^1\times\bFq,\sheaf{F}\otimes\sheaf{L}_{\psi(xY)}).
$$
From Lemmas~\ref{lm-tensor-cond} and~\ref{lm-cont-hic}, we already see
that the maximum over $x$ of the dimension of these spaces, hence also
$\rank(\mcG)$, is bounded in terms of $\cond(\sheaf{F})$.
\par
We next claim that $\pct(\sheaf{G})=0$.
For this, we use the first spectral sequence of Lemma~\ref{lm-leray-seq} (taking
 $U=\Aa^1$ there) and apply Proposition~\ref{pr-prop-seq}, (3) to
deduce
$$
\pct(\sheaf{G})= \dim H^0_c(\Aa^1\times\bFq,\sheaf{G})=\dim
H^1_c(\Aa^2\times\bFq,p_2^*\sheaf{F}\otimes
\sheaf{L}_{\psi(XY)}).
$$
\par
Let $S\subset \Aa^1$ be the finite set of singularities of $\sheaf{F}$
in $\Aa^1$ and $T=\Aa^1\times S\subset \Aa^2$. The sheaf
$\sheaf{M}=p_2^*\sheaf{F}\otimes \sheaf{L}_{\psi(XY)}$ is lisse on the
dense open set $W=\Aa^2-T$. Applying excision~(\ref{eq-excision}), we
get an exact sequence
$$
\cdots \lra H^1_c(\bar{W},\sheaf{M})
\lra H^1_c(\Aa^2\times\bFq,\sheaf{M})
\lra H^1_c(\bar{T},\sheaf{M})\lra \cdots.
$$
\par
We have
$$
H^1_c(\bar{W},\sheaf{M})=0
$$
by~(\ref{eq-affine-vanishing}), because $W$ is an affine surface and
$\sheaf{M}$ is lisse on $W$. Also, since $T$ is a disjoint union of
``horizontal'' lines, we have
$$
H^1_c(\bar{T},\sheaf{M})=\bigoplus_{y\in S}H^1_c(\Aa^1\times\bFq,
\sheaf{L}_{\psi(yX)})\otimes\sheaf{F}_y=0
$$
because $H^1_c(\Aa^1\times\bFq,\sheaf{L}_{\psi(yX)})=0$ for all $y\in
S$ (including $y=0$). The excision exact sequence then gives
$H^1_c(\Aa^2\times\bFq,\sheaf{M})=0$, as claimed.
\par
By Lemma~\ref{lm-cd-bound}, we deduce that the conductor of
$\mcG$ is bounded in terms of the conductor of
$\sheaf{F}$, and of the invariant
$$
h_1(\sheaf{G})=\dim H^1_c(\Aa^1\times\bFq,\sheaf{G}).
$$
\par
By the first spectral sequence of Lemma~\ref{lm-leray-seq} 
 and Proposition~\ref{pr-prop-seq}, (1), we get
$$
\dim H^1_c(\Aa^1\times\bFq,\sheaf{G}) \leq \dim
H^2_c(\Aa^2\times\bFq,\sheaf{M}).
$$
\par
To compute this last group, we first use the second spectral sequence,
which shows that
$$
H^2_c(\Aa^2\times\bFq,\sheaf{M})\simeq
E_3^{0,2}\oplus E_3^{1,1}\oplus E_3^{2,0}\simeq E_3^{0,2}\oplus E_3^{1,1}.
$$
\par
We have
\begin{align*}
  E_2^{1,1}&=H^1_c(\Aa^1\times\bFq,R^1p_{2,!}(p_2^*\sheaf{F}\otimes
  \sheaf{L}_{\psi(XY)}))\\
  &=H^1_c(\Aa^1\times\bFq,\sheaf{F}\otimes R^1p_{2,!}\sheaf{L}_{\psi(XY)})
\end{align*}
by the projection formula. But the sheaf
$R^1p_{2,!}\sheaf{L}_{\psi(XY)}$ is zero, since the fiber at any $y\in
\Aa^1(\bFq)$ is
$$
H^1_c(\Aa^1\times\bFq, \sheaf{L}_{\psi(yX)})=0.
$$
\par
As for $E_3^{0,2}$, it is a subspace of
$$
E_2^{0,2}=H^0_c(\Aa^1\times\bFq,
R^2p_{2,!}(p_2^*\sheaf{F}\otimes\sheaf{L}_{\psi(XY)}))=
H^0_c(\Aa^1\times\bFq,\sheaf{F}\otimes R^2p_{2,!}\sheaf{L}_{\psi(XY)}).
$$
\par
The stalk of $R^2p_{2,!}\sheaf{L}_{\psi(XY)}$ at $y\in\bFq$ is
$$
H^2_c(\Aa^1\times\bFq,\sheaf{L}_{\psi(yX)})=
\begin{cases}
\bQl&\text{ if } y=0\\
0&\text{ otherwise,}
\end{cases}
$$
so the sheaf $R^2p_{2,!}\sheaf{L}_{\psi(XY)}$ is punctual and
supported at $0$ with stalk $\sheaf{F}_0$. Hence the dimension of
$E_2^{0,2}$ is at most the rank $\rank(\sheaf{F})\leq
\cond(\sheaf{F})$. Thus we obtain
$$
\dim H^2_c(\Aa^2\times\bFq,\sheaf{M})\leq \cond(\sheaf{F}).
$$
This concludes the proof of Theorem~\ref{th-conductor}
for the Fourier transform. We state it formally for convenience:

\begin{corollary}
  Let $\Fq$ be a finite field of characteristic $p$, $\ell\not=p$ a
  prime number. Let $\psi$ be a non-trivial additive $\ell$-adic
  character of $\Fq$. There exists a function $n\mapsto C(n)$ with
  positive integral values such that, for any middle-extension sheaf
  $\sheaf{F}$ on $\Aa^1_{\Fq}$, the naive Fourier transform
  $\ft_{\psi}(\sheaf{F})$ satisfies $\pct(\ft_{\psi}(\sheaf{F}))=0$
  and we have
$$
\cond(\ft_{\psi}(\sheaf{F}))\leq C(\cond(\sheaf{F})).
$$
\end{corollary}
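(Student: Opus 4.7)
The plan is to follow the strategy of Section~\ref{sec-polymath} essentially verbatim, which decomposes the problem into four sub-tasks: bounding the rank of $\sheaf{G}=\ft_{\psi}(\sheaf{F})$, its number of singularities, its punctual part, and the global invariant $\sigma(\sheaf{G})=\dim H^1_c(\bar{U}_1,\sheaf{G})$, and then appealing to Lemma~\ref{lm-cd-bound}.

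First I would bound the generic rank. By proper base change, the stalk at $x$ is $H^1_c(\Aa^1\times\bFq,\sheaf{F}\otimes\sheaf{L}_{\psi(xY)})$, whose dimension is controlled by Lemma~\ref{lm-tensor-cond} and Lemma~\ref{lm-cont-hic} (or by Corollary~\ref{cor-fiber-dim} directly). This immediately gives $\rank(\sheaf{G})\leq C(\cond(\sheaf{F}))$. For the number of singularities, I would identify the open set $U\subset \Aa^1$ where the $H^2_c$ above vanishes; off $U$, the sheaf $\sheaf{L}_{\psi(-xY)}$ is a Jordan--Hölder quotient of $\sheaf{F}$, so $|\Aa^1-U|\leq \rank(\sheaf{F})$. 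Then I would refine to $U_1\subset U$ where $\swan_{\infty}(\sheaf{F}\otimes\sheaf{L}_{\psi(xY)})$ is constant; the Euler--Poincaré formula shows the stalk dimension is then constant on $U_1(\bFq)$.

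Next I would show $\pct(\sheaf{G})=0$, which by Lemma~\ref{lm-criterion-constant} then yields $n(\sheaf{G})\leq |\Aa^1-U_1|$ and gives the full bound on $m(\sheaf{G})$. For this, apply Proposition~\ref{pr-prop-seq}~(3) to the first Leray spectral sequence of Lemma~\ref{lm-leray-seq} to identify
$$
\pct(\sheaf{G})=\dim H^1_c(\Aa^2\times\bFq,p_2^*\sheaf{F}\otimes\sheaf{L}_{\psi(XY)}).
$$
An excision argument along the horizontal lines $T=\Aa^1\times S$ (where $S$ is the singular set of $\sheaf{F}$), combined with the affine vanishing~(\ref{eq-affine-vanishing}) on $W=\Aa^2-T$ and the fact that $H^1_c(\Aa^1,\sheaf{L}_{\psi(yX)})=0$ for all $y$, reduces this group to zero.

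Finally, I would bound $\sigma(\sheaf{G})=\dim H^1_c(\bar{U}_1,\sheaf{G})$. Proposition~\ref{pr-prop-seq}~(2) applied to the first spectral sequence with $U=U_1$ gives $\sigma(\sheaf{G})\leq \dim H^2_c(\bar{U}_1\times\Aa^1,p_2^*\sheaf{F}\otimes\sheaf{L}_{\psi(XY)})$. I then use excision~(\ref{eq-sigma1}) for the inclusion $U_1\subset \Aa^1$ to compare this with $H^2_c(\Aa^2,\sheaf{M})$ at the cost of $\dim H^1_c(\bar{C},\sheaf{M})$ with $C=\Aa^1-U_1$, which decomposes into a sum of $\dim H^1_c(\Aa^1\times\bFq,\sheaf{F}\otimes\sheaf{L}_{\psi(xY)})$ already controlled by Corollary~\ref{cor-fiber-dim}. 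For the remaining $H^2_c(\Aa^2,\sheaf{M})$, I apply Proposition~\ref{pr-prop-seq}~(1) to the second spectral sequence of Lemma~\ref{lm-leray-seq} (using $\sheaf{K}^*=\sheaf{L}_{\psi(YX)}=\sheaf{K}$); the $E_2^{1,1}$ term vanishes by the projection formula since $R^1p_{2,!}\sheaf{L}_{\psi(XY)}=0$, while $E_3^{0,2}$ is bounded by $\rank(\sheaf{F})$ because $R^2p_{2,!}\sheaf{L}_{\psi(XY)}$ is punctual at $0$ with stalk $\bQl$. The only step that requires real care is the bookkeeping of constants to confirm polynomial dependence, but since the corollary only asks for the existence of some function $C(n)$, this is routine. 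Combining all four bounds with Lemma~\ref{lm-cd-bound} concludes.
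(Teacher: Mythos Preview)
Your proposal is correct and follows the paper's own argument in Section~\ref{sec-polymath} essentially step for step: the same four sub-tasks, the same spectral sequence and excision manipulations, and the same final appeal to Lemma~\ref{lm-cd-bound}. One small slip: the inequality $\sigma(\sheaf{G})=\dim E_2^{1,1}\leq \dim E^2$ comes from Proposition~\ref{pr-prop-seq}~(1) (the decomposition $E^2\simeq E_3^{0,2}\oplus E_2^{1,1}\oplus E_3^{2,0}$), not from part~(2), whose first inequality goes in the opposite direction.
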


As we already mentioned in the introduction, we obtain
in~\cite[Prop. 8.2]{FKM1} the estimate
$$
\cond(\sheaf{\ft}_{\psi}(\sheaf{F}))\leq 10\cond(\sheaf{F})^2
$$
for $\sheaf{F}$ a Fourier sheaf on $\Aa^1_{\Fp}$, using the local
study of the Fourier transform, due to Laumon~\cite{laumon}. It is
clear that the arguments above  can also be used to give a completely effective upper bound.

\begin{remark}
(1)  A Fourier sheaf is defined to be a middle-extension sheaf which has
  no subsheaf or quotient sheaf geometrically isomorphic to an
  Artin-Schreier sheaf $\sheaf{L}_{\psi(aX)}$. For a sheaf which is
  not of this type, the naive Fourier transform is not the right
  object to consider, but this is of course not due to a failure of
  continuity.
\par
For instance, if $\sheaf{F}=\sheaf{L}_{\psi(Y)}$ (a typical
non-Fourier sheaf!) we have
$$
R^1p_{1,!}(p_2^*\sheaf{F}\otimes \sheaf{L}_{\psi(XY)})=0
$$
since the stalk of this sheaf at $x\in \Aa^1(\bFq)$ is
$$
H^1_c(\Aa^1\otimes\bFq,\sheaf{L}_{\psi((1+x)Y)})=0
$$
for all values of $x$. This certainly has bounded conductor!
\par
(2) For Fourier sheaves, other properties of the Fourier transform are
established, relatively elementarily, in~\cite[8.2.5,
8.4.1]{katz-gkm}: the Fourier transform is again a Fourier sheaf, and
the Fourier transform of a geometrically irreducible Fourier sheaf is
again geometrically irreducible.
\par
We can deduce a version of the irreducibility property (which suffices
in many applications) from the diophantine irreducibility criterion of
Lemma~\ref{lm-irred-crit}.  Indeed, if $\sheaf{F}$ is a
middle-extension Fourier sheaves which is pointwise pure of weight
$0$, we have the discrete Plancherel formula
$$
\frac{1}{q^{\nu}} \sum_{x\in
  \Fqn}|\frfn{\sheaf{F}}(x,q^{\nu})|^2
=
\frac{1}{q^{\nu}} \sum_{t\in
  \Fqn}|\frfn{\ft_{\psi}(\sheaf{F})}(t,q^{\nu})|^2
$$
for $\nu\geq 1$. The Fourier transform $\ft_{\psi}(\sheaf{F})$ is
mixed of weight $\leq 0$ by the Riemann Hypothesis (in fact, it is
known to be pure of weight $0$, but this is again a deeper fact),
hence Lemma~\ref{lm-irred-crit} implies that $\sheaf{F}$ is
geometrically irreducible if and only if the weight $0$ part of
$\ft_{\psi}(\sheaf{F})$ is geometrically irreducible.
\end{remark}
\subsection{The \textsc{Polymath8} kernel}
We next consider  another example discussed in the blog of the \textsc{Polymath8}
project, which we will reduce to a Fourier transform. We let
$$
f=\frac{1}{X(X+Y)}+hY
$$
where $h\in\Fq^{\times}$ is a parameter, and we wish to bound the
conductor of
$$
R^1p_{1,!}\sheaf{L}_{\psi(f)}(1/2),
$$
i.e., the corresponding transform of the trivial sheaf, by a constant
(independent of $q$). 
\par
We outline the steps that prove such a bound, leaving some details to
the reader.
\par
-- It is equivalent to bound the conductor of
$$
R^1p_{1,!}\sheaf{L}_{\psi(g)}(1/2)
$$
where $g=(XY)^{-1}+hY-hX$ (applying the automorphism $(X,Y)\mapsto
(X,X+Y)$). By the projection formula, we have 
$$
R^1p_{1,!}\sheaf{L}_{\psi(g)}(1/2)=\sheaf{L}_{\psi(-hX)}\otimes\sheaf{G},
$$
where
$$
\sheaf{G}=R^1p_{1,!}\sheaf{L}_{\psi(h)}(1/2),\quad\quad h=\frac{1}{XY}+hY.
$$
\par
By Lemma~\ref{lm-tensor-cond}, it is enough to estimate the conductor
of $\sheaf{G}$. 
\par
-- Note that the trace function of $\sheaf{G}$ is
$$
\frac{1}{q^{1/2}}\sum_{y\not=0}\psi\Bigl(\frac{1}{xy}+hy\Bigr)=\frac{1}{q^{1/2}}\sum_{v\not=0}
\psi\Bigl(\frac{1}{v}+\frac{h}{x}v\Bigr)
$$
for $x\not=0$, which is visibly a normalized Kloosterman sum with parameter
$h/x$. Let
$$
\pi\,:\,\begin{cases}
  \Gg_m\times\Gg_m\lra \Gg_m\times\Gg_m\\
  (x,y)\mapsto (hx^{-1},xy)
\end{cases}
$$
and 
$$
\nu\,:\, \begin{cases}
  \Gg_m\lra \Gg_m\\
  x\mapsto hx^{-1}
\end{cases}
$$
\par 
Then the analogue of the change of variable $(u,v)=\pi(x,y)=(h/x,xy)$
that establishes this identity is the isomorphism
$$
\nu^*R^1p_{1,!}\sheaf{L}_{\psi(V^{-1}+UV)}\simeq
R^1p_{1,!}\sheaf{L}_{\psi((XY)^{-1}+hY)}
$$
of sheaves over the multiplicative group $\Gg_m=\Aa^1-\{0\}$ over
$\Fq$, which is a consequence of the isomorphism
$\pi^*\sheaf{L}_{\psi(V^{-1}+UV)}\simeq
\sheaf{L}_{\psi((XY)^{-1}+Y)}$.
\par
Note that
$$
R^1p_{1,!}\sheaf{L}_{\psi(V^{-1}+UV)}(1/2)=\ft_{\psi}(\sheaf{L}_{\psi(V^{-1})}),
$$
which has bounded conductor independently of $q$. Since $\nu$ is an
automorphism, and since the dimensions of the stalk of $\sheaf{G}$ at
$0$ is bounded, it follows from the fact that $\sheaf{G}$ coincides
with $(\nu^{-1})^*\ft_{\psi}(\sheaf{L}_{\psi(V^{-1})})$ on $\Gg_m$
that the conductor of $\sheaf{G}$ is bounded for all $q$, as desired.

\begin{remark}
The Fourier transform of $\sheaf{L}_{\psi(X^{-1})}$ is the
\emph{Kloosterman sheaf} (in one variable), that was originally
defined by Deligne. See~\cite{katz-gkm} for its properties, and
generalizations to more than one variable.
\end{remark}

\end{document}